\renewcommand\section{\@startsection
	{section}% nom du titre
{1}% niveau de titre
{0pt}% indentation
{-3.5ex plus -1ex minus -.2ex}%espace vertical avant 
{2.3ex plus.2ex}% espace vertical après 
{\centering\normalfont\Large\scshape}}
\renewcommand\subsection{\@startsection
	{subsection}% nom du titre
{2}% niveau de titre
{0pt}% indentation
{-3ex plus -1ex minus -.2ex}%espace vertical avant 
{1ex plus.2ex}% espace vertical après 
{\normalfont\large\bfseries}}
\renewcommand\subsubsection{\@startsection
	{subsubsection}% nom du titre
{3}% niveau de titre
{0pt}% indentation
{-1.5ex plus -1ex minus -.2ex}%espace vertical avant 
{0.8ex plus .2ex}% espace vertical après 
{\normalfont\bfseries}}
\renewcommand\paragraph{\@startsection
	{paragraph}% nom du titre
{4}% niveau de titre
{0em}% indentation
{-1.2ex plus -0.4ex minus -.2ex}%espace vertical avant 
{0ex}% espace vertical après 
{\normalfont\bfseries}}
\newcommand\A{\mathcal A}
\newcommand\B{\mathcal B}
\newcommand\R{\mathbb R}
\newcommand\N{\mathbb N}
\newcommand\Z{\mathbb Z}
\newcommand\Q{\mathbb Q}
\newcommand\U{\mathbb U}
\newcommand\s\sigma
\newcommand\az{\A^\Z}
\newcommand\bz{\B^\Z}
\newcommand\dc{d_C}
\newcommand\per[1]{\mathstrut^\infty#1^\infty}
\newcommand\tx{\text}
\newcommand{\Vb}{\mathbf{V}}
\newcommand\dm{d_\mathcal M}
\newcommand\Ba{\mathfrak{B}}
\newcommand\Merg{\mathcal{M}_{\s-\textrm{erg}}}
\newcommand\Mpsimix{\mathcal{M}_{\s-\textrm{mix}}}
\newcommand\Mcomp{\mathcal{M}_{\s}^{\textrm{comp}}}
\newcommand\Mscomp{\mathcal{M}_{\s}^{\textrm{$\Delta_2$-comp}}}
\newcommand\V{\mathcal{V}}
\newcommand\K{\mathcal{K}}
\newcommand\Ms{\mathcal{M}_{\s}}
\newcommand\Ball{\mathbf{B}}
\newcommand\supp{\textrm{supp}}
\newcommand\F{F_\ast}
\newcommand\Mergfull{\Merg^\textrm{full}}
\newcommand\Mmixfull{\Mpsimix^\textrm{full}}
\newcommand{\Kset}{\mathfrak{K}}
\newcommand\meas[1]{\widehat{\delta_{#1}}}
\theoremstyle{definition}
\newtheorem{definition}{Definition}
\newtheorem*{remark}{Remark}
\newtheorem{example}{Example}
\theoremstyle{plain}
\newtheorem{theorem}{Theorem}
\newtheorem{proposition}{Proposition}
\newtheorem{fact}{Fact}
\newtheorem{open}{Open question}
\newtheorem{corollary}{Corollary}
\newcounter{claimcount}[theorem]
\newcommand{\THMfont}[1]{{\sl #1}}
\newcommand{\Claim}[1]{\refstepcounter{claimcount} \vspace{0.4em}\noindent {\sc Claim \theclaimcount: \ }\THMfont{ #1}}
\newcommand{\bclaimprf}[1][Proof.~]{\begin{list}{}{\setlength{\leftmargin}{0.5em}
\setlength{\rightmargin}{2em}\setlength{\listparindent}{1em}}\item
{\em #1}}
\newcommand{\eclaimprf}{ \hfill $\Diamond$~{\scriptsize {\tt Claim~\theclaimcount}}\end{list}} % %
\newcommand{\wall}{\hspace{-0.1cm}
\vbox to 9pt{\hbox {
\begin{tikzpicture}[scale=0.38]
\draw [fill=black!20!white] (0,0) rectangle (1,1);
\draw (0.5,0.5) node {{\footnotesize W}};
\end{tikzpicture}
}}\hspace{-0.1cm}}
\newcommand{\start}{\hspace{-0.1cm}
\vbox to 9pt{\hbox{
\begin{tikzpicture}[scale=0.38]
\draw (0,0) rectangle (1,1);
\draw (0.5,0.5) node {{\footnotesize I}};
\end{tikzpicture}
}}
\hspace{-0.1cm}}
\newcommand{\fusion}{\hspace{-0.1cm}
\vbox to 9pt{\hbox{
\begin{tikzpicture}[scale=0.38]
\draw (0,0) rectangle (1,1);
\draw (0.5,0.5) node {{\footnotesize M}};
\end{tikzpicture}
}}
\hspace{-0.1cm}}
\newcommand{\copie}{\hspace{-0.1cm}
\vbox to 9pt{\hbox{
\begin{tikzpicture}[scale=0.38]
\draw (0,0) rectangle (1,1);
\draw (0.5,0.5) node {{\footnotesize C}};
\end{tikzpicture}
}}
\hspace{-0.1cm}}
\newcommand{\fire}{\vbox to 9pt{\hbox{
\begin{tikzpicture}[scale=0.4]
\draw [dashed, fill=gray!20] (0,0) rectangle (1,1);
\draw (0.5,0.5) node{$F$};
\end{tikzpicture}
}}}
\newcommand{\lfire}{\vbox to 9pt{\hbox{
\begin{tikzpicture}[scale=0.23]
\draw [dashed, fill=gray!20] (0,0) rectangle (1,1);
\draw (0.5,0.5) node{{\tiny $F$}};
\end{tikzpicture}
}}}
\newcommand{\order}{\vbox to 9pt{\hbox{
\begin{tikzpicture}[scale=0.4]
\draw [dashed, fill=black!50] (0,0) rectangle (1,1);
\draw (0.5,0.5) node{$O$};
\end{tikzpicture}
}}}
\newcommand{\white}{\vbox to 9pt{\hbox{
\begin{tikzpicture}[scale=0.4]
\draw [dashed,] (0,0) rectangle (1,1);
\draw (0.5,0.5) node{$ $};
\end{tikzpicture}
}}}
\newcommand{\lwhite}{\vbox to 9pt{\hbox{
\begin{tikzpicture}[scale=0.23]
\draw [dashed,] (0,0) rectangle (1,1);
\draw (0.5,0.5) node{$ $};
\end{tikzpicture}
}}}
\newcommand{\block}{\vbox to 9pt{\hbox{
\begin{tikzpicture}[scale=0.4]
\draw [dashed, fill=black!50] (0,0) rectangle (1,1);
\draw (0.5,0.5) node{$ $};
\end{tikzpicture}
}}}
\renewcommand\alph[1]{\A_\texttt{#1}}
\newcommand\define\textbf
\newcommand{\card}{\textrm{Card}}
\newcommand{\freq}{\textrm{Freq}}
\title[Characterisation of sets of limit measures of a CA]{Characterisation of sets of limit measures of a cellular
automaton iterated on a random configuration}
\author{Benjamin Hellouin de Menibus}
\address{Aix Marseille Université, CNRS, Centrale Marseille, I2M UMR 7373, 13453, Marseille, France}
\email{benjamin.hellouin@gmail.com}
\urladdr{http://mat-unab.cl/$\sim$hellouin/}
\author{Mathieu Sablik}
\address{Aix Marseille Université, CNRS, Centrale Marseille, I2M UMR 7373, 13453, Marseille, France}
\email{mathieu.sablik@univ-amu.fr}
\urladdr{http://www.i2m.univ-amu.fr/$\sim$sablik/}
\subjclass[2010]{37B10,37B15,03D10,03D80}
\keywords{Symbolic Dynamics, Cellular automata, SRB measures,Turing machines}
\date{}
\begin{document}
\begin{abstract}The asymptotic behaviour of a cellular automaton iterated on a random configuration is well described by
its limit probability measure(s). In this paper, we characterise measures and sets of measures that can be reached as
limit points after iterating a cellular automaton on a simple initial measure. 
In addition to classical topological constraints, we exhibit necessary computational obstructions. With an additional
hypothesis of connectivity, we show these computability conditions are sufficient by constructing a cellular automaton
realising these sets, using auxiliary states in order to perform computations. Adapting this construction, we obtain a
similar characterisation for the Ces\`aro mean convergence, a Rice theorem on the sets of limit points, and we are able
to perform computation on the set of measures, i.e. the cellular automaton converges towards a set of limit points that
depends on the initial measure. Last, under non-surjective hypotheses, it is possible to remove auxiliary states from
the construction.
\end{abstract}

\maketitle

\section*{Introduction}
A cellular automaton is a complex system defined by a local rule which acts synchronously and uniformly on the
configuration space $\az$, where $\A$ is a finite alphabet. These simple models have a wide variety of different
dynamical behaviours. We are interested in the typical asymptotic behaviour starting from a random configuration, as
this is usually done in simulations; different approaches stemmed from such observations. It is well-described by taking
the iterated image of the initial measure under the action of the cellular automaton, and considering the limit points
of this sequence in the weak$^\ast$ topology. 

It is natural to ask which sets of measures can be obtained as limit points in this way. Obviously, any measure can be
reached by iterating the identity on itself. Therefore, a more interesting approach is to start from some simple measure
such as the uniform Bernoulli measure. In some sense, this is similar to SRB measures which are “physically” relevant
invariant measures obtained when starting from the Lebesgue measure in continuous dynamical systems~\cite{Young-2002}.

Formally speaking, given a simple initial measure $\mu$, we want to characterise all reachable $\V(F,\mu)$, the sets
of accumulation points of the sequence $(\F^t\mu)_{t\in\N}$ of the images of $\mu$ under the iterated action of $F$,
and $\V'(F,\mu)$, the sets of accumulation points of $\left(\frac{1}{t+1}\sum_{i=0}^t\F^i\mu\right)_{t\in\N}$, the
Ces\`aro mean of the previous sequence, for all possible cellular automata $F$. 

Previous works focused on the $\mu$-limit set, which corresponds to the union of the support of the limit
measures~\cite{Kurka-Maass-2000,Kurka-2005}. 
Very complex $\mu$-limit sets can be constructed~\cite{Boyer-Poupet-Theyssier-2006,Boyer-Delacourt-Sablik-2010}, and our
construction is partly inspired from these works.\bigskip

Describing limit measures has been done for only few concrete nontrivial examples. There are essentially two types of
convergence quite well understood:
\begin{itemize}
\item convergence towards a simple measure: for example, the cyclic cellular automaton on three states introduced
in~\cite{Fisch-1990}, starting from a Bernoulli measure, converges towards a linear combination of Dirac measures
supported by uniform configurations~\cite{Hellouin-Sablik-2011}; 
\item randomisation phenomenon for linear cellular automata: the Ces\`aro mean sequence of the iteration of a linear
cellular automaton on a initial measure converges to the uniform Bernoulli measure as soon as the initial measure is in
a large class which contains Markov measures~\cite{Lind84, Ferrari-Maass-Martinez-Ney-2000, Maass-Martinez-1998,
Pivato-Yassawi-2002}.
\end{itemize}

For any cellular automaton, starting from a Bernoulli measure or a Markov measure, we obtain after a finite number of
steps a hidden Markov chain which is well understood~\cite{Boyle-Petersen-2011}. If we consider a computable initial
measure $\mu$ (which means that there is an algorithm that approximates at a known rate the probability that a word
$u\in\A^*$ appears), then it is easy to see that $\F^t\mu$ is also computable. For example, a Bernoulli or Markov
measure is computable iff its parameters are computable real numbers.

The limit measure(s) are not necessarily computable since the speed of convergence is not known. Nevertheless, we show
in Section~\ref{section:ComputableObstruction} that there exists necessary computational obstructions. The main problem
is to prove the reciprocal, in other words: given a set of measures satisfying the computational obstructions, construct
a cellular automaton which, starting on any simple initial measure, reaches exactly this set asymptotically.
Similar computational obstructions appear when characterising possible topological dynamics properties of subshifts of
finite type or cellular automata: possible entropies~\cite{Hochman-Meyerovitch-2010}, possible growth-type
invariants~\cite{Meyerovitch-2011}, possible sub-actions~\cite{Hochman-2009,Aubrun-Sablik-2010}... 
However, the construction is quite different here since starting from a random configuration requires to self-organise 
the space, in the same spirit as the probabilistic cellular automaton of~\cite{Gacs-2001} which corrects the random
perturbations. 

In Section~\ref{section:Construction}, we construct a cellular automaton $F$ such that, starting from any shift-mixing
probability measure $\mu$ with full support, the limit points of the sequence of measures $(\F^t\mu)_{t\in\N}$ are
described as the accumulation points of a computable polygonal path of measures supported by periodic orbits. 
First of all the cellular automaton divides the initial configuration in segments and formats each segment using a
method similar to the one developed in~\cite{Delacourt-Poupet-Sablik-Theyssier-2011}. Computation takes place in a
negligible part of each segment and the result is copied periodically on the rest of the segment. In order to have an
arbitrarily large area of computation, segments are merged progressively in a controlled manner. The difficulty of the
construction is to synchronise all the operations to ensure the convergence.

In the Section~\ref{section:NoAux}, we modify this construction so that we do not need auxiliary states, i.e., the
cellular automaton only uses the same alphabet as the limit measure(s). This is only possible,
however, at the price of some additional hypotheses on the support of the measures. 

In Section~\ref{section:RelatedProblem} we use these constructions to answer various questions, along with some open
questions. The results are, for a fixed measure $\mu$ in a large class of measures:
\begin{itemize}
 \item characterisation of shift-invariant measures $\nu$ such that there exists a cellular automaton $F$ which
verifies $\F^t\mu\underset{t\to\infty}{\longrightarrow}\nu$ (Corollary~\ref{cor:ConvUniqueMeasure});
 \item characterisation of connected subsets of shift-invariant measures $\K$ such that there exists a cellular
automaton $F$ which verifies $\V(F,\mu)=\K$ (Corollary~\ref{cor:ConvSetMeasure});
 \item characterisation of subsets of shift-invariant measures $\K'$ such that there exists a cellular automaton $F$
which verifies $\V'(F,\mu)=\K'$ (Corollary~\ref{cor:ConvCesàroMeasure});
 \item Rice theorem for shift-invariant measures and connected subsets of shift-invariant measures reached by a cellular
automaton (Corollary~\ref{cor:Rice}).
\item counterparts of all these results without auxiliary states, using the modified construction
(Corollaries \ref{cor:ConvSetMeasureNoAux}, \ref{cor:ConvCesaroMeasureNoAux} and \ref{cor:RiceNoAux}).
\end{itemize}

In Section~\ref{section:Oracle}, we consider the case where the set of limit points depends on the initial measure.
Computational constraints appear to describe functions $\mu\longmapsto\V(F,\mu)$ that can be realised in this way.
Indeed, it is possible to ``transfer'' the computational complexity of the initial measure (using it as an oracle) to
the set of limit points. Modifying the construction of Section~\ref{section:Construction}, we manage to build 
a set of limit points depending on the density of a special state; however, we do not obtain a complete
characterisation.\bigskip

\section{Definitions}

\subsection{Configuration space and cellular automata} 

Let $\A$ be a finite alphabet. Consider $\az$ the \define{space of configurations} which are $\Z$-indexed sequences in
$\A$. If $\A$ is
endowed with the discrete topology, $\az$ is compact, perfect and totally disconnected in the product topology.
Moreover one can define a metric on $\az$ compatible with this topology:
\[\forall x,y\in\az,\quad \dc(x,y)=2^{-\min\{|i|\ :\ x_i\ne y_i, \ i\in\Z \}}.\]

Let $\U\subset\Z$. For $x\in\az$, denote $x_{\U}\in\A^{\U}$ the restriction of $x$ to $\U$. Given a pattern
$w\in\A^{\U}$, one defines the cylinder $[w]_{\U}=\{x\in\az : x_{\U}=w\}$. Denote $\A^{\ast} = \bigcup_n\A^n$ the set of
all \define{finite words} $w=w_0\dots w_{n-1}$; $|w|=n$ is the \define{length} of $w$. Also denote
$[w]_i=[w]_{[i,i+|w|-1]}$ and $[w]=[w]_0=[w]_{[0,|w|-1]}$, and $\A^{\leq k} = \bigcup_{n\leq k}\A^n$. For any
$u\in\A^\ast$ such that $|u|\leq |w|$, define the \define{frequency} of $u$ in $w$ as
$\freq(u,w)=\frac{1}{|w|-|u|+1}\card\left(\left\{i\in[0,|w|-|u|]:w_{[i,i+|u|]}=u\right\}\right)$.

The \define{shift} map $\s:\az\to\az$ is defined by $\s(x)_i=x_{i+1}$ for $x=(x_m)_{m\in\Z}\in\az$ and $i\in\Z$ and a \define{subshift} is a closed $\s$-invariant subset of $\az$. For $w\in\A^{\ast}$, $\per{w}$ is the \define{$\s$-periodic word} defined by
$\per{w}_{[0,|w|-1]}=w$ and $\s^{i+|w|}(\per{w})=\s^i(\per{w})$ for all $i\in\Z$.

A \define{cellular automaton} (CA) is a pair $(\az,F)$ where $F:\az\to\az$ is  a continuous function that commutes with
the shift ($\s\circ F=F\circ\s$). By the Curtis–Hedlund–Lyndon theorem \cite{Hedlund}, it is equivalent to a function
defined by $F(x)_i=\overline{F}((x_{i+u})_{u\in\U_F})$ for all $x\in\az$ and $i\in\Z$, where
$\U_F\subset\Z$ is a finite set named \define{neighbourhood} and $\overline{F}:\A^{\U_F}\rightarrow\A$ is a \define{local
rule}.

\subsection{\texorpdfstring{Sets of measures on $\az$}{Sets of measures on AZ}}

\subsubsection{Dynamical properties}\label{section:dynamical}
 Let $\Ba$ be the Borel sigma-algebra of $\az$. Denote by $\mathcal{M}(\az)$ the set of probability measures on $\az$
defined on the sigma-algebra $\Ba$. Let $\Ms(\az)$ be the \define{$\s$-invariant probability measures} on $\az$, that is
to say the measures $\mu$ such that $\mu(\s^{-1}(B))=\mu(B)$ for all $B\in\Ba$. Cylinders corresponding to finite words
form a base of the topology, so a measure $\mu\in\Ms(\az)$ is entirely characterised by $\{\mu([u])\ :\ u\in\A^\ast\}$.

Usually $\Ms(\az)$ is endowed with the \define{weak$^\ast$ topology}: a sequence $(\mu_n)_{n\in\N}$ in $\Ms(\az)$
converges to $\mu\in\Ms(\az)$ if and only if, for all finite subsets $\U\subset\Z$ and for all patterns $u\in\A^{\U}$,
one has $\lim_{n\to\infty}\mu_n([u]_{\U})=\mu([u]_{\U})$. In the weak$^{\ast}$ topology, the
set $\Ms(\az)$ is compact and metrisable. A metric is defined by
\[\dm(\mu,\nu)=\sum_{n\in\N}\frac{1}{2^n}\max_{u\in\A^n}|\mu([u])-\nu([u])|.\]

Define the \define{ball} centered on $\mu\in\Ms(\az)$ of radius $\varepsilon >0$ as
\[\Ball(\mu,\varepsilon)=\left\{\nu\in\Ms(\az) : \dm(\mu,\nu)\leq\varepsilon \right\},\]
and this definition is extended to balls around a set of measures. A measure $\mu\in\Ms(\az)$ is \define{$\s$-ergodic} if for every $\s$-invariant $B\in\Ba$ ($\s^{-1}(B)=B$
$\mu$-almost everywhere), one has $\mu(B)=0$ or $1$. The set of $\s$-ergodic probability measures is denoted by
$\Merg(\az)$. 

For $\U\subset\Z$ not necessarily finite, denote by $\Ba_{\U}$ the $\s$-algebra generated by the set
$\{[u]_{\U}:u\in\A^{\U'}, \U'\underset{\textrm{finite}}{\subset}\U\}$.
Define the \define{weak mixing coefficients} of a measure $\mu\in\Ms(\az)$ as
\[\psi_{\mu}(n)=\sup\left\{\left|\frac{\mu(A\cap B)}{\mu(A)\mu(B)}-1\right| : A\in \Ba_{]-\infty,0]},\,
B\in\Ba_{[n,\infty[},\, \mu(A)>0,\, \mu(B)>0\right\}.\]

A measure $\mu\in\Ms(\az)$ is $\mathbb{\psi}$\define{-mixing} if $\psi_{\mu}(n)\underset{n\to\infty}{\longrightarrow}0$.
Denote $\Mpsimix(\az)$ the set of $\psi$-mixing measures. In particular $\Mpsimix(\az)\subset\Merg(\az)$.

For a measure $\mu\in\Ms(\az)$, define the \define{support} of $\mu$ $\supp(\mu)$ as the set of configurations of
$\az$ such that any open neighbourhood of these points have positive measure. $\mu$ has \define{full support} if
$\supp(\mu) = \az$, which is equivalent to $\mu([u])>0$ for all $u\in\A^{\ast}$. Denote $\Mergfull(\az)$ the set of
ergodic measures with full support, and $\Mmixfull(\az)$ the set of $\psi$-mixing measures with full support.

\subsubsection{Classical examples} 

The \define{Dirac measure} supported by $x\in\az$ is defined as $\delta_x(A)= \mathbf 1_{x\in A}$. Generally $\delta_x$
is not $\s$-invariant. However, for any $\s$-periodic configuration $^{\infty}w^{\infty}$, it is possible to define
the \define{$\s$-invariant measure supported by $^{\infty}w^{\infty}$} by taking the mean of the Dirac measures
on the orbit under $\sigma$:
\[\meas{w}=\frac1{|w|}\sum_{i\in[0,|w|-1]}\delta_{\s^i(\mathstrut^\infty w^\infty)}.\]

The set of measures $\left\{\meas{w}:w\in\A^{\ast}\right\}$ is dense in $\Ms(\az)$~\cite{Peteren-1983-livre}.

Given $(p_a)_{a\in\A}$ a family of elements of $[0,1]$ such that $\sum_{a\in\A}p_a=1$. The \define{Bernoulli} measure $\lambda_{(p_a)}$ associated to $(p_a)_{a\in\A}$ is defined by \[\lambda_{(p_a)}([u])=p_{u_1}p_{u_2}\dots p_{u_n}\textrm{ for all } u=u_1u_2\dots u_n\in\A^n.\]

\subsection{Action of a cellular automaton on $\Ms(\az)$ and limit points} 

\subsubsection{Definition of $\V(F,\mu)$ and $\V’(F,\mu)$}

Let $(\az,F)$ be a cellular automaton and $\mu\in\Ms(\az)$. Define the \define{image measure} $\F\mu$ by
$\F\mu(A)=\mu(F^{-1}(A))$ for all $A\in\Ba$.
Since $F$ is $\s$-invariant, that is to say $F\circ\s=\s\circ F$, one deduces that $\F(\Ms(\az)) \subset \Ms(\az)$ and
$\F(\Merg(\az))\subset\Merg(\az)$. This
defines a continuous application $\F:\Ms(\az)\to\Ms(\az)$.  

We consider in particular $\F^t\mu$ the iterated image of $\mu$ by $\F$, and its \define{Cesàro mean} at
time $t\in\N$ defined by $\varphi_t^F(\mu)=\frac{1}{t+1}\sum_{i=0}^t\F^i\mu\in\Ms(\az)$.

For a measure $\mu\in\Ms(\az)$, we are interested in the asymptotic behaviour of the sequences $(\F^t\mu)_{t\in\N}$ and
$(\varphi_t^F\mu)_{t\in\N}$. Define
\begin{itemize} 
\item $\V(F,\mu)$, the \define{$\mu$-limit measures set}, as the set of \textbf{limit points} of the sequence $(F^t\mu)_{t\in\N}$;
\item $\V'(F,\mu)$, the \define{Cesàro mean $\mu$-limit measures set}, as the set of limit points of the sequence
$(\varphi_t^F\mu)_{t\in\N}$. 
\end{itemize}
Since $\Ms(\az)$ is compact, $\V(F,\mu)$ and $\V'(F,\mu)$ are nonempty. When
$\V(F,\mu)$ is a singleton $\{\nu\}$, $\F^t\mu([u])\underset{t\to\infty}{\longrightarrow}\nu([u])$. \bigskip

\subsubsection{Topological obstructions}

Our main purpose is to characterise which sets of measures can be realised in this way.
There are topological obstructions for these sets: $\V(F,\mu)$ and $\V'(F,\mu)$ are closed and thus compact since $\Ms(\az)$ is it.

Moreover $\V'(F,\mu)$ is connected. Indeed, let $\mathcal K$ be a connected component of $\V'(F,\mu)$ and assume that $\V'(F,\mu)$ is not reduced to $\mathcal K$. Since $\mathcal K$ and $\V'(F,\mu)\setminus \mathcal K$ are closed in a compact set, there exists a minimum distance $d>0$ between them. Since $\dm(\varphi_t^F(\mu),\varphi_{t+1}^F(\mu))\underset{t\to\infty}{\longrightarrow}0$, the set $\Ball(\K, \frac {2d}3) \backslash \Ball(\K, \frac d3)$ contains arbitrarily many points in the sequence and thus contains a limit point, this is a contradiction.

%%%%

\section{Computability obstructions}\label{section:ComputableObstruction}

In this section, we explore the computability obstructions of $\V(F,\mu)$ and $\V'(F,\mu)$ when the initial measure $\mu$ is computable.

\subsection{Notions of computability}\label{section:TuringMachine}

\begin{definition}A \define{Turing machine} $\mathcal{TM}=(Q,\Gamma,\#,q_0,\delta,Q_F)$ is defined by:
\begin{itemize}
\item $\Gamma$ a finite alphabet and a blank symbol $\#\notin\Gamma$;
\item $Q$ the finite set of states of the head; $q_0\in Q$ is the initial state;
\item $\delta : Q\times(\Gamma\cup\{\#\})\to Q\times(\Gamma\cup\{\#\})\times\{\leftarrow,\rightarrow\}$ the transition function. 
\item $Q_F\subset Q$ the set of final states. 
\end{itemize}
Initially, a one-sided infinite memory tape is filled with $\#$, except for a finite prefix (the input), and a computing head in state $q_0$ is located on the first letter of the tape. At each time step, given the state of the head and the letter it reads on the tape --- depending on its position --- the head changes state, replaces the letter and moves by one cell at most, according to the transition function. When a final state is
reached, the computation stops and the output is the value currently written on the tape.

A function $f : \A^\ast\to \A^\ast$ is \define{computable} if there exists a Turing machine working on an alphabet
$\Gamma\supset\A$ that, on any input $w\in\A^\ast$, eventually stops and outputs $f(w)$.
\end{definition}

To generalise this definition to functions mapping arbitrary countable sets $X \to Y$, we introduce the notion of
\define{encoding.}

An encoding for a countable set $X$ is the
choice of a finite alphabet $\Gamma_X$, a subset $V_X\subset \Gamma_X^\ast$ of \define{valid encodings} and a surjection $e_X :
V_X\twoheadrightarrow X$. Intuitively, a word in $\Gamma_X^\ast$ represents an
element of $X$, but an element can have several valid encodings and not all words of $\Gamma^\ast$ are a valid encoding
of an element of $X$. Strictly speaking, the following definitions depend on the chosen encoding, but empirically all
reasonable choices lead to the same notion of computability. Therefore we fix canonical
encodings that will be valid throughout this article for $\N$, $\Q$ and their products.
\begin{itemize}
 \item $\N$ or $\Z$: $\Gamma = \{0,1\}$ and to each binary sequence we assign the
corresponding integer, with the first bit encoding the sign for $\Z$;
 \item $X\times Y$: If $\Gamma_X, e_X$ and $\Gamma_Y, e_Y$ are the encodings
fixed for $X$ and $Y$, respectively, we put $\Gamma = \Gamma_X\cup\Gamma_Y
\sqcup \{|\}$ (disjoint union, i.e. assume $|$ is a fresh symbol),
 and to $x|y$ we assign $(e_X(x),e_Y(y))$;
 \item $\Q$: Take the encoding for $\Z\times\N^{\ast}$ and compose the encoding function by $(p,q)\longmapsto \frac pq$
(surjection).
\end{itemize}

\begin{definition}
A function $f : X\to Y$ between two countable sets is \define{computable}  if there exists a Turing machine
working on an alphabet containing $\Gamma_X \cup \Gamma_Y$ that, on any input
$x\in V_X$, stops and outputs $y\in V_Y$ such that $f(e_X(x)) = e_Y(y)$. 

A set $K\subset X$ is \define{computable} if $\mathbf{1}_K$ is computable.
\end{definition}

\subsection{Measures and computability}\label{section:ComputableMeasure}

\subsubsection{Definitions and some examples}

Since a probability measure $\mu\in\Ms(\az)$ is characterised by the value of $\mu([u])$ for all words $u\in\A^\ast$, it
can be seen as a particular type of function $\A^\ast\to\R$. Here $\R$ is not countable but is a metric space with a
countable dense set. Therefore it is natural to define a computable function $\A^\ast\to\R$ as a function that can be
approximated by functions $\A^\ast\to\Q$ in a computable manner.

\begin{definition}[Computability of probability measures]
A measure $\mu\in\Ms(\az)$ is \define{computable} (or \define{$\Delta_1$-computable}) iff there exists $f:\A^{\ast}\times\N\to\Q$ computable such that
\[|\mu([u])-f(u,n)|<2^{-n} \qquad \textrm{for
all }u\in\A^{\ast} \textrm{ and }n\in\N.\]  

A sequence of measures $(\mu_i)_{i\in\N}$ is \define{a uniformly computable sequence of computable measures} iff there exists $f:\A^{\ast}\times\N\times \N\to\Q$ computable such that
$|\mu_i([u])-f(u,n,i)|<2^{-n}.$ This is a stronger statement than saying that all $\mu_i$ are computable. 

A measure $\mu\in\Ms(\az)$ is \define{limit-computable} (or \define{$\Delta_2$-computable}) iff there exists a uniformly computable sequence of computable measures
$(\mu_i)_{i\in\N}$ such that
$\lim_{i\to\infty}\mu_i=\mu$.
 Equivalently there exists $f:\A^{\ast}\times\N\to\Q$ computable such that
\[|\mu([u])-f(u,n)|\underset{n\to\infty}{\longrightarrow}0\qquad \textrm{for all
}u\in\A^{\ast}.\]
\end{definition}

Denote $\Mcomp(\az)$ the set of computable measures and $\Mscomp(\az)$ the set of limit-computable measures. Of course
$\Mcomp(\az)\subset\Mscomp(\az)$. 
\begin{example}
Let us provide some examples of such measures:
\begin{itemize}
\item any measure supported by a periodic orbit is computable;
\item any Bernoulli measure or Markov measure with computable\footnote{Here the computability of a real is defined as the computability of the function that maps $n$ to the
$n^{\textrm{th}}$ digit of the real.} (resp. limit-computable) parameters is computable (resp. limit-computable);
\item if an effective subshift (i.e. such the set of forbidden patterns can be enumerated by a Turing machine) has a unique $\s$-ergodic measure $\mu$, then $\mu$ is computable. 

To obtain an approximation of $\mu([u])$ for a pattern $u$, we construct an algorithm as follows. At step $n$, the algorithm enumerates the first $n$ forbidden patterns and produces all words of size $n$ which does not contain any of these forbidden patterns. If the frequencies of $u$ in all these words are sufficiently close (less than the requested precision), the average frequency is an approximation of $\mu([u])$. If not, we continue to the step $n+1$, until an approximation is found. This process must stop since, for all elements of an unique ergodic subshift, the frequency of a pattern converges towards the measure of this pattern (we would otherwise obtain another $\s$-invariant measure). We deduce that $\mu$ is a computable measure. This proof can be found in~\cite{Galatolo-Hoyrup-Rojas-2011} with a more abstract point of view.

This class of measures is very large. For example, this is the case for any subshift obtained by a primitive substitution or the orbit of a Sturmian word defined by some computable slope. For more details about measures of primitive substitutions and Sturmian words, see for example~\cite{CANT}.
\end{itemize}
\end{example}

\subsubsection{Approximation by measures supported by periodic orbits}

It is well known that the set of measures supported by periodic orbits is dense in $\Ms(\az)$ for the weak topology. The notions of computability for probability measures can be defined in an equivalent way using this countable dense set and the distance $\dm$ which measures the approximation.

\begin{proposition}[Approximation by measures supported by periodic orbits]\label{prop:ApproximComputableMeasure}
It is possible to define computable and limit-computable measure as approximation of measures supported by periodic orbits.

\begin{itemize}
\item[(i)] A measure $\mu\in\Ms(\az)$ is computable if and only if there exists a computable function $f:\N\to\A^{\ast}$  such that
$\dm\left(\mu,\meas{f(n)}\right)\leq 2^{-n}$
for all $n\in\N$.
\item[(ii)] A measure $\mu\in\Ms(\az)$ is limit-computable if and only if there exists a computable function $f:\N\to\A^{\ast}$ such that
$\underset{n\to\infty}{\lim}\meas{f(n)}=\mu$.
\end{itemize}
\end{proposition}
\begin{proof}(i)
Let $\mu\in\Mcomp(\az)$. Given some $n\in\N$, we can enumerate words in $\A^\ast$ in a computable manner until we find a
word $f(n)$ such that $|\mu([u])-\meas{f(n)}([u])|<2^{-n-2}$ for all $u\in\A^{\leq n+1}$. Such a word exists since the
set $\left\{\meas{w}:w\in\A^{\ast}\right\}$ is dense in $\Ms(\az)$, and it is eventually found since $\mu$ and
$\meas{w}$ are computable. One
has \[\dm(\mu,\meas{f(n)})=\sum_{i\in\N}\frac{1}{2^i}\max_{u\in\A^i}|\mu([u])-\meas{f(n)}([u])|\leq
\frac{1}{2^{n+1}}+\sum_{i\geq n+2}\frac{1}{2^i}\leq\frac{1}{2^n}.\]
The converse is obvious since $(n,u) \longmapsto \meas{f(n)}([u])$ is computable and we have \[\forall
u\in\A^\ast, \left|\mu([u]) - \meas{f(n)}([u])\right|\leq 2^{|u|}\dm\left(\mu,\meas{f(n)}\right).\]

(ii) Let $\mu\in\Mscomp(\az)$. There exists a uniformly computable sequence of computable measures $(\mu_i)_{i\in\N}$ such that
$\lim_{i\to\infty}\mu_i=\mu$. For each $n\in\N$,
we enumerate words until we find $f(n)\in\A^{\ast}$ such that $\dm(\mu_n,\meas{f(n)})\leq 2^{-n}$. Clearly
$f:\N\to\A^{\ast}$ is computable and we have $\dm(\mu,\meas{f(n)})\leq
\dm(\mu,\mu_n)+\dm(\mu_n,\meas{f(n)})\underset{n\to\infty}{\longrightarrow}0$.
The converse is similar to the previous case.
\end{proof}

\subsection{Action of a cellular automaton on computable measures}

When a computable measure is iterated by a cellular automaton, the resulting sequence of measures is uniformly computable. This is formalised in the following property.

\begin{proposition}[Uniform computability]\label{prop:UniformComp}
Let $F:\az\to\az$ be a cellular automaton. If $\mu\in\Mcomp(\az)$, then $(\F^t\mu)_{t\in\N}$ is an
uniformly computable sequence of computable measures. \end{proposition}
\begin{proof}
By definition, there is a computable function $f:\A^{\ast}\times\N\to\Q$ such that $|\mu([u])-f(u,n)|\leq2^{-n}$ for all
$u\in\A^\ast$. Because $F$ is defined locally, $F^t(x)_{[0,k]}$ depends
only on $x_{[lt,rt+k]}$ where $l = \min \U_F$ and $r = \max \U_F$. In other words, for all $u\in\A^k$, there is a set $\mathbf{Pred}_t(u)\subset\A^{[lt,rt+k]}$
such that $F^{-t}([u])=\cup_{v\in\mathbf{Pred}_t(u)}[v]$. Now consider the function \[f':(u,n,t)\longmapsto
\sum_{v\in\mathbf{Pred}_t(u)}f(v,|u|+n+(r-l)t).\] It is computable by enumerating elements of $\A^{k+(r-l)t}$ and
checking if $F^t([v]_{-lt})\subset [u]$ by iterating the local rule on $v$. Finally:
\begin{align*}
|\F^t\mu([u])-f'(u,n,t)|&=\left|\mu\left(\bigcup_{v\in\mathbf{Pred}_t(u)}[v] \right) -
\sum_{v\in\mathbf{Pred}_t(u)}f(v,|u|+n+(r-l)t)\right| \\
&\leq \sum_{v\in\mathbf{Pred}_t(u)} \left|\mu([v]) - f(v,|u|+n+(r-l)t)\right|\\
&\leq 2^{|u|+(r-l)t}\cdot 2^{-|u|-n-(r-l)t} = 2^{-n}
\end{align*}
which means that $(\F^t\mu)_{t\in\N}$ is a uniformly computable sequence of computable measures.
\end{proof}

From this proposition, we deduce the first computability obstruction when the sequence $(\F^t\mu)_{t\in\N}$ converges toward a single limit starting from a computable measure $\mu$.

\begin{proposition}[First computability obstruction]\label{prop:OneStep}
Let $F:\az\to\az$ be a cellular automaton. If $\mu\in\Mcomp(\az)$ and $\F^t\mu\underset{t\to\infty}{\longrightarrow}\nu$ then $\nu\in\Mscomp(\az)$.
\end{proposition}

We have obtained a computability obstruction on single limit measures. In the following subsections, we extend this obstruction to sets of limit points.

\subsection{Closed sets in computable analysis}\label{section:ComputableAnalysis}

\subsubsection{Definitions and some examples}

We introduce computability notions on compact subsets of metric spaces which cannot be defined using the characteristic 
function since the compact set is not necessary countable. A standard reference book of the theory of computable analysis 
on metric spaces is~\cite{Weihrauch-2000}, but this theory is widely applied in the context of invariant measures 
(see for example \cite{Galatolo-Hoyrup-Rojas-2011}). Computability in a general metric space is defined according 
to a countable dense subset and a metric, $(\meas{w})_{w\in\A^\ast}$ and $\dm$ in the case of $\Ms(\az)$.

\begin{definition}
 A closed set $\K\subset \Ms(\az)$ is \define{computable} if the set
 \[\left\{(w,r)\in\A^\ast\times \Q\ :\ \overline{\Ball(\meas{w}, r)} \cap \K \neq \emptyset \right\}\]
is computable, that is, if its characteristic function is.
\end{definition}

However, a set of limit points of a sequence $(\F^t\mu)_{t\in\N}$, for $\mu\in\Mcomp(\az)$, is not necessarily
computable. We need to extend these definitions further in order to obtain an arithmetic
hierarchy.

\begin{definition}[$\Sigma_2$ and $\Pi_2$-computable function]
Let $X,Y$ be two countable sets, with $Y$ being ordered. A sequence of functions $(f_i:X\to Y)_{i\in\N}$ (resp. $(f_{i,j}:X\to Y)_{i,j\in\N^2}$) is 
\define{uniformly computable} if $(i,x)\longmapsto f_i(x)$ (resp. $(i,j,x)\longmapsto f_{i,j}(x)$) is computable.

A function $f:X\to Y$ is \textbf{$\Pi_1$-computable} (resp. \textbf{$\Sigma_1$-computable}) if $f =
\inf_{i\in\N}f_{i}$ (resp. $f = \sup_{i\in\N}f_{i}$), where $(f_{i})_{i\in\N}$ is
an uniformly computable sequence of functions.

A function $f:X\to Y$ is \textbf{$\Pi_2$-computable} (resp. \textbf{$\Sigma_2$-computable}) if $f =
\inf_{i\in\N}\sup_{j\in\N}f_{i,j}$ (resp. $f = \sup_{i\in\N}\inf_{j\in\N}f_{i,j}$), where $(f_{i,j})_{(i,j)\in\N^2}$ is
an uniformly computable sequence of functions.
\end{definition}

\begin{definition}[$\Sigma_2$ and $\Pi_2$-computable closed set]
 A closed set $\K\subset \Ms(\az)$ is \define{$\Pi_2$-computable} (resp. \define{$\Sigma_2$-computable}) if the set
 \[\left\{(w,r)\in\A^\ast\times \Q\ :\ \overline{\Ball(\meas{w}, r)} \cap \K \neq \emptyset \right\}\]
is $\Pi_2$-computable (resp. $\Sigma_2$), that is, if its characteristic function is.
\end{definition}

\begin{remark}
The symmetric notions of $\Pi_2$- and $\Sigma_2$-computability come from an analogy with the real arithmetic
hierarchy~\cite{Zheng-Weihrauch-2001,Ziegler-2005}.
These definitions extend naturally to $\Pi_n$- and $\Sigma_n$-computability.
\end{remark}

\begin{example}~

\begin{itemize}
\item $\Ms(\az)$ is a computable set.
\item the set of shift-invariant measures supported by any effective subshift is a $\Pi_1$-computable
compact set (that is to say the set $\left\{(w,r)\in\A^\ast\times \Q\ :\ \overline{\Ball(\meas{w}, r)} \cap \K \neq \emptyset \right\}$ is  $\Pi_1$-computable);
\item let $K \subset[0,1]$ be a closed $\Pi_n$-computable set\footnote{The computability of a closed set of
real numbers is defined similarly to the computability of a closed set of probability measures.}
and denote $\lambda_p\in\Ms(\{0,1\}^\Z)$ the Bernoulli measure which charges $0$ with the probability $p$ and $1$ with the probability $1-p$. The set $\K=\{\lambda_p:p\in K\}$ is a $\Pi_n$-computable
compact set of $\Ms(\{0,1\}^\Z)$ and is connected if and only if $K$ is. Furthermore $\{\alpha\lambda_p+(1-\alpha)
\lambda_q:p,q\in K\textrm{ and }\alpha\in[0,1]\}$ is a $\Pi_n$-computable compact connected set of
$\{0,1\}^\Z$. This example extends naturally to larger alphabets and Markov measures;
\item denote $\mu_\alpha\in\Ms(\{0,1\}^\Z)$ the measure supported by the Sturmian subshift of slope $\alpha$. The set
$\K=\{\mu_\alpha:\alpha\in K\}$, where $K$ is a $\Pi_n$-computable closed subset of $[0,1]$, is a
$\Pi_n$-computable compact set of $\Ms(\{0,1\}^\Z)$ and is connected if and only if $K$ is.
\end{itemize}
\end{example}

\begin{proof}[Proof of the second example]
Let $\Sigma\subset\az$ be an effective subshift, which means that it is defined by a set of forbidden patterns $\mathcal F$ and 
there exists a computable function $f : i,u\longmapsto\{0,1\}$ such that 
$u\in \mathcal F\Longleftrightarrow \sup_if(i,u)=1$. Denote $\mathcal F_i = \{u\in\A^\ast\ :\ \sup_{j\leq i} f(j,u)=1\}$.
 
There exists an effective sequence of integers $(\alpha_i)_{i\in\N}$ such that: \[\forall \mu\in\Ms(\az), \exists w\in\A^{\leq \alpha_i},\ \dm(\mu,\meas{w})\leq \frac{1}{i}.\] This is due from the fact that $\Ms(\az)$ is a recursively precompact metric space (see~\cite{Galatolo-Hoyrup-Rojas-2011}).

%The method to exhibit the sequence $(\alpha_i)_{i\in\N}$ is to take $n$ such that $\frac{1}{2^{n-1}}<\frac {1}{2i}$ and consider the smallest $k$ such that for all $w\in\A^{\leq k}$ there , then consider the set of vector $$\left\{\vec{v}\in(\Q\cap[0,1])^{\A^n}: \sum_{u\in\A^n}\vec{v}_u=1 \textrm{ and } \sum_{a\in\A}\vec{v}_{wa}=\sum_{a\in\A}\vec{v}_{aw} \textrm{ for all }w\in\A^{n-1}\right\},$$%A such effective sequence exists (see \cite{Hellouin-2014}) 
 
Now define: \[\mathbf{W}_{i} = \left\{w\in\A^{\leq\alpha_i}\ :\ \sum_{\ell\in\N}\frac 1{2^\ell}\max_{v\in\mathcal F_i\cap \A^\ell}\meas{w}([v])\leq \frac{1}{i}\right\},\]
where the maximum is worth 0 when the set is empty, which means that the sum has a finite number of terms: the maximum length of a word in $\mathcal F_i$.

Let $A$ be the algorithm that, on input $(u,r) \in \A^\ast\times\Q$ and $i\in\N$, 
\begin{enumerate}
 \itemsep0em
 \item computes all elements of $\mathcal F_i$ (evaluating a computable function over a finite set of arguments);
 \item computes all $w\in\mathbf{W}_{i}$ (a finite number of tests, and $u\longmapsto\meas{w}([u])$ is a function $\A^\ast\to\Q$ 
 which can be evaluated exactly);
  \item computes $d_i(\meas{w},\meas{u}) = \sum_{n=0}^i\frac 1{2^n}\max_{v\in\A^n}|\meas{w}([v])-\meas{u}([v])|$ for all $w\in\mathbf{W}_{i}$;
 \item outputs 1 if there exists $w\in\mathbf{W}_{i}$ such that $d_i(\meas{w},\meas{u})\leq r+\frac 1i$ and $0$ otherwise.
\end{enumerate}

We prove the correctness of this algorithm, that is, we show that \[\inf_{i\in\N} A(u,r,i) = 1 
\Longleftrightarrow \overline{\Ball(\meas{u}, r)} \cap \Ms(\Sigma) \neq \emptyset.\]

Notice that for every sequence $(w_i)_{i\in\N}$ satisfying $w_i\in\mathbf{W}_{i}$ for all $i$,
any accumulation point $\mu$ of the sequence $(\meas{w_i})_{i\in\N}$ satisfies $\mu([u])=0$ for all $u\in\mathcal F$, and therefore
$\mu\in\Ms(\Sigma)$.

If $\inf_{i\in\N} A(u,r,i) = 1$ then for all $i\in\N$ there exists $w_i\in\mathbf{W}_{i}$ such that $d_i(\meas{w_i},\meas{u})\leq r+\frac{1}{i}$. Thus one has $\dm(\meas{w_i},\meas{u})\leq r+\frac 1{2^i}+\frac {1}{i}$. We deduce that for any accumulation point $\mu$ of $(\meas{w_i})_{i\in\N}$ one has $\dm(\mu,\meas{u})\leq r$ and therefore $\overline{\Ball(\meas{u}, r)} \cap \Ms(\Sigma) \neq \emptyset$.

Conversely, let $\mu \in \overline{\Ball(\meas{u}, r)} \cap \Ms(\Sigma)$. There exists a sequence $(w_i)_{i\in\N}$ such that $w_i\in\A^{\leq\alpha_i}$ and $\dm(\mu,\meas{w_i})\leq \frac{1}{i}$. Since $\mu\in\Ms(\Sigma)$, one has
\[\frac 1i\geq \dm(\mu,\meas{w_i}) \geq \sum_{\ell\in\N}\max_{u\in\mathcal F\cap \A^\ell}\meas{w_i}([u])
 \geq \sum_{\ell\in\N}\max_{u\in\mathcal F_i\cap \A^\ell}\meas{w_i}([u])\]
which means that $w_i \in \mathbf{W}_{i}$. Furthermore, \[d_i(\meas{u},\meas{w_i})\leq \dm(\meas{u},\meas{w_i})\leq\dm(\meas{u},\mu)+\dm(\mu,\meas{w_i})  \leq r+\frac 1i\]
so $A(u,r,i) = 1$ for all $i\in\N$. 

We conclude that $\Ms(\Sigma)$ is a $\Pi_1$-computable set.
\end{proof}

\subsubsection{Equivalent definitions of $\Pi_2$-computability}

The $\Pi_2$-computability of a closed set can be defined in other equivalent ways, which requires to extend notions
of computability and $\Pi_2$-computability to functions mapping metric spaces with countable dense sets.

\begin{definition}\label{def:sigma2compact}
A sequence of functions $(f_n:\Ms(\az)\longrightarrow\R)_{n\in\N}$ is \define{a uniformly computable sequence of functions} if:
\begin{itemize}
\item there exists  $a:\N\times\N\times\A^{\ast}\longrightarrow\Q$ computable such that
$\left|f_n(\meas{w})-a(n,m,w)\right|\leq\frac 1m$ for every $w\in\A^{\ast}$ and $n,m\in\N$ (sequential
computability);
\item there exists $b:\N\longrightarrow\Q^+$ computable such that $\dm(\mu,\nu)\leq b(m)$ implies
$\left|f_n(\mu)-f_n(\nu)\right|\leq\frac{1}{m}$ for all $n,m\in\N$ (computable uniform equicontinuity).
\end{itemize}
A function $f:\Ms(\az)\longrightarrow\R$ is \define{$\Pi_1$-computable} if there exists
a uniformly computable sequence of functions $(f_n:\Ms(\az)\longrightarrow\R)_{n\in\N}$ such that $f=\inf_n\, f_n$.

A function $f:\Ms(\az)\longrightarrow\R$ is \define{$\Sigma_2$-computable} if there exists a uniformly computable 
sequence of computable functions $(f_{i,j}:\Ms(\az)\longrightarrow\R)_{(i,j)\in\N}$ such that $f=\sup_i\inf_j\, f_{i,j}$. 
\end{definition}

\begin{proposition}\label{prop:EquivComput}
 Let $\K\subset \Ms(\az)$ be a closed set. The following are equivalent:
 \begin{enumerate}
  \item $\K$ is $\Pi_2$-computable;
  \item $d_\K : \mu\longmapsto \min_{\nu\in\K}\dm(\mu,\nu)$ is $\Sigma_2$-computable;
  \item $\K = f^{-1}(\{0\})$ where $f:\Ms(\az)\longrightarrow\R$ is a $\Pi_1$-computable function.
 \end{enumerate} 
\end{proposition}

\begin{proof}~

$\mathbf{1\Longrightarrow2:}$
Assume there is a computable function
$f:\N^2\times\A^{\ast}\times\Q\longrightarrow\R$ such that, for every
$w\in\A^\ast$ and $r\in\Q$,
$\overline{\Ball(\meas{w},r)}\cap \K\ne\emptyset \Longleftrightarrow\inf_i\sup_j f(i,j,w,r)=1$.
Consider the sequence \[\left(d_{i,j,w,r}: \mu\longmapsto (1-f(i,j,w,r))\max\left(0,r-\dm(\meas{w},\mu)\right)\right)_
{(i,j,w,r)\in\N^2\times\A^\ast\times\Q}.\] The function $(i,j,w,r,w')\longmapsto d_{i,j,w,r}(\meas{w'})$ is computable as a
product of computable functions (sequential computability) and every $d_{i,j,w,r}$ is 1-Lipschitz (computable uniform equicontinuity), hence this sequence is a uniformly computable sequence of functions. We now show that $d_\K=\sup_{w,r}\sup_i\inf_jd_{i,j,w,r}$.

For any $(w,r)$ such that $\inf_i\sup_jf(i,j,w,r)=0$, we have $d_\K(\meas{w})>r$, and thus for all $\mu\in\Ms(\az)$ one has:
\[\sup_i\inf_jd_{i,j,w,r}(\mu) = \max\left(0, r-\dm(\meas{w},\mu)\right) \leq \max\left(0,
d_\K(\meas{w})-\dm(\meas{w},\mu)\right) \leq d_\K(\mu).\]

If $\mu\in\K$, we conclude that $\sup_{i,w,r}\inf_jd_{i,j,w,r}(\mu) = 0 = d_\K(\mu)$.

Now let $\mu\notin\K$. For all $\varepsilon>0$, there exists $w$ such that
$\dm(\meas{w},\mu)\leq \varepsilon$. Let $r\in\Q$ be such that $0< d_\K(\meas{w})-r<\varepsilon$, which implies that
$\overline{\Ball(\meas{w},r)}\cap \K=\emptyset$ and so $\inf_i\sup_jf(i,j,w,r) = 0$. Furthermore $d_\K(\mu)\leq
d_\K(\meas{w}) +\dm(\meas{w},\mu)\leq r+2\varepsilon$, we deduce that 
\[\sup_i\inf_jd_{i,j,w,r}(\mu) = r-\dm(\meas{w},\mu)\leq r-2\varepsilon- \dm(\meas{w},\mu)\leq d_\K(\mu)-3\varepsilon.\] 
The latter is true for every $\varepsilon>0$, we deduce that
$\sup_{i,w,r}\inf_jd_{i,j,w,r}(\mu) = d_\K(\mu)$.\bigskip

$\mathbf{2\Longrightarrow3:}$ Let $(d_{i,j}:\Ms(\az)\longrightarrow\R)_{(i,j)\in\N^2}$ be a uniformly computable sequence of
computable functions such that $d_\K=\sup_{i\in\N}\inf_{j\in\N}\, d_{i,j}$. By considering $\sup(d_{i,j},0)$ (which is uniformly computable
since $x\longmapsto \sup(x,0)$ is computable), these
functions are assumed nonnegative w.l.o.g. Denote $g_{i,n}=\inf\{{d_{i,j}\ :\ j\in\{0,\dots,n\}}\}$.
\begin{align*}
d_\K(\mu)=0&  \Longleftrightarrow  \sum_{i\in\N} \frac 1{2^i} \left(\inf_{j\in\N} d_{i,j}(\mu)\right) =0\\
        & \Longleftrightarrow  \sum_{i\in\N} \frac 1{2^i} \left(\inf_{n\in\N} g_{i,n}(\mu)\right) =0\\
        & \Longleftrightarrow \inf_{n\in\N}\sum_{i\in\N} \frac 1{2^i}  g_{i,n}(\mu) = 0,
\end{align*}
where the last equivalence is obtained by the monotone convergence theorem, $g_{i,n}$ being decreasing in $n$. Let $f_n: \mu\longmapsto \sum_{i\in\N} \frac 1{2^i}  g_{i,n}(\mu)$. $(f_n)_{n\in\N}$ is a uniformly computable sequence of functions,
since computing $(n,w’)\longmapsto f_n(\meas{w'})$ up to precision $2^{-r}$ only requires to compute the values of $d_{i,j}(\meas{w'})$ for $i,j\in\{0,\dots,r\}$, and the computable uniform equicontinuity of $(f_n)_{n\in\N}$ is a consequence of the computable uniform equicontinuity of $(d_{i,j})_{(i,j)\in\N^2}$. Thus $\K=f^{-1}(0)$ where $f=\inf_n f_n$.\bigskip

$\mathbf{3\Longrightarrow1:}$ Let $(f_n:\Ms(\az)\to\R)_{n\in\N}$ be a uniformly computable sequence of functions such that
$f=\inf_{n\in\N}\, f_n$. We assume w.l.o.g that the sequence is decreasing. For $i\in\N$, we note: 
\[d_i(\mu, \nu) = \sum_{n=1}^i\frac 1{2^n}\max_{u\in\A^n}|\mu([u])-\nu([u])|,\]
so that $0\leq \dm(\mu,\nu)-d_i(\mu,\nu)\leq \frac 1{2^i}$.
For any $n\in\N, w'\in\A^\ast$ and $i\in\N$,
define 
\[K_{n,w’,i}=\left\{(w,r)\in\A^\ast\times \Q\ :\ d_i(\meas{w}, \meas{w'})\leq r\textrm{ and } |f_i(\meas{w’})|\leq \frac{1}{n}\right\}.\] 

The function $(n,w’,i,w,r) \longmapsto \mathbf 1_{K_{n,w’,i}}(w,r)$ is
computable and thus the characteristic functions $\mathbf 1_{K_{n,w’,i}}$ are uniformly computable. Define:
\[K =\bigcap_{n\in\N}\bigcup_{\substack{w'\in\A^\ast\\
i\in\N}}K_{n,w’,i}\hspace{1cm}\textrm{and thus} \hspace{1cm} \mathbf 1_{K} = \inf_{n\in\N}\sup_{\substack{w'\in\A^\ast\\ i\in\N}}\mathbf 1_{K_{n,w’,i}}.\]

The set $K$ is $\Pi_2$-computable by definition, we are going to prove that \[K = \left\{(w,r)\in \A^\ast\times\Q\ :\ \overline{\Ball(\meas w,r)}\cap \K \ne \emptyset\right\}.\]

Let $(w,r)\in K$. For all $n\in\N$, there exists $w_n\in\A^{\ast}$ and $i_n\in\N$ such that $\dm(\meas{w},\meas{w_n})\leq r$ and $|f_{i_n}|\leq \frac{1}{n}$. By compactness, there exists $\mu\in\Ms(\az)$ such that $\dm(\meas{w},\mu)\leq r$ and $f(\mu)=\lim_{n\to\infty} f_{i_n}(w_{i_n})=0$. Thus $\overline{\Ball(\meas w,r)}\cap \K \ne \emptyset$.

Conversely, consider $\mu\in\overline{\Ball(\meas w,r)}\cap \K$. Since $f(\mu)=0$, for all $n\in\N$ there exists $i_n\in\N$ such that $|f_{i_n}(\mu)|\leq\frac{1}{2n}$. Thus one has $|f_{i_n}(\nu)|\leq\frac{1}{n}$ for all $\nu\in \Ball(\mu,b(2n))$. Let $w_n\in\A^{\ast}$ such that $\meas{w_n}\in\overline{\Ball(\meas w,r)}\cap\Ball(\mu,b(n))$. One has $|f_{i_n}(w_n)|\leq\frac{1}{n}$, that is to say $(w,r)\in K_{n,w_n,i_n}$. Since it is verified for all $n\in\N$, one deduces that $(w,r)\in K$.
\end{proof}

\begin{remark}
There exist other equivalent definitions for $\Pi_2$-computable closed set. See~\cite{Hellouin-2014} for some complements.
\end{remark}

\subsection{Computability obstruction for $\V(F,\mu)$ and $\V'(F,\mu)$}

We now state the computability obstruction for subsets of $\Ms(\az)$ reachable as limit sets
of the sequence $(\F^t\mu)_{t\in\N}$ ($\mu$-limit measures sets).

\begin{proposition}[Second computability obstruction]\label{prop:add=s2ccc}
Let $F : \az\to\az$ be a cellular automaton and $\mu\in\Mcomp(\az)$. Then $\V(F,\mu)$ and $\V'(F,\mu)$ are nonempty
$\Pi_2$-computable compact sets.
\end{proposition}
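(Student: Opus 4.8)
The plan is to exhibit the distance functions $d_{\add(F,\mu)}$ and $d_{\add'(F,\mu)}$ directly as pointwise $\liminf$'s of transparently computable sequences of functions. By Definition~\ref{def:sigma2compact} this is exactly what has to be shown, since nonemptiness and compactness are automatic: both sets are closed (the cluster-point set of any sequence is closed) subsets of the compact space $\Ms(\az)$. First I would record that by Proposition~\ref{prop:OneStep} the sequence $(\F^t\mu)_{t\in\N}$ is a computable sequence of measures, and that averaging the rational approximations furnished by that proposition shows the Cesàro sequence $(\varphi_t^F\mu)_{t\in\N}$ is a computable sequence of measures as well.

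The crux is the elementary identity: for any sequence $(x_t)_{t\in\N}$ in a compact metric space with cluster-point set $L$, one has $d_L(y)=\inf_{z\in L}\dm(z,y)=\liminf_{t\to\infty}\dm(x_t,y)$ for every point $y$. One inequality: if $z\in L$ is the limit of a subsequence $x_{t_k}$, then $\liminf_t\dm(x_t,y)\le\liminf_k\dm(x_{t_k},y)=\dm(z,y)$ (passing to a subsequence cannot decrease the $\liminf$, and $\dm(\cdot,y)$ is continuous), and taking the infimum over $z\in L$ gives $\liminf_t\dm(x_t,y)\le d_L(y)$. The other: choosing a subsequence along which $\dm(x_t,y)$ tends to $\liminf_t\dm(x_t,y)$ and extracting, by compactness, a convergent sub-subsequence, its limit $z$ lies in $L$ and satisfies $\dm(z,y)=\liminf_t\dm(x_t,y)$, hence $d_L(y)\le\liminf_t\dm(x_t,y)$. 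Applying this with $x_t=\F^t\mu$ and with $x_t=\varphi_t^F\mu$ yields $d_{\add(F,\mu)}(\nu')=\liminf_{t\to\infty}\dm(\F^t\mu,\nu')$ and $d_{\add'(F,\mu)}(\nu')=\liminf_{t\to\infty}\dm(\varphi_t^F\mu,\nu')$ for all $\nu'\in\Ms(\az)$.

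It then remains to check that $f_t:\nu'\mapsto\dm(\F^t\mu,\nu')$ (resp. $\nu'\mapsto\dm(\varphi_t^F\mu,\nu')$) is a computable sequence of functions. For sequential computability, one truncates the series $\dm(\F^t\mu,\meas{w})=\sum_{k\in\N}2^{-k}\max_{u\in\A^k}|\F^t\mu([u])-\meas{w}([u])|$ after $k=K$ at a cost of at most $2^{-K}$; the remaining finite sum is computable to arbitrary precision uniformly in $(t,w)$ because $(\F^t\mu)_{t}$ is a computable sequence of measures and $\meas{w}([u])$ is a computable rational, which produces the required $a(t,m,w)$. For computable uniform equicontinuity, each $f_t$ is $1$-Lipschitz for $\dm$ by the triangle inequality $|\dm(a,\nu')-\dm(a,\mu')|\le\dm(\nu',\mu')$, the constant $1$ being independent of $t$, so $b(m)=\tfrac1{m+1}$ works uniformly. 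Hence $d_{\add(F,\mu)}$ and $d_{\add'(F,\mu)}$ are $\Sigma_2$-computable, which finishes the proof. The only point to get right is the identity of the second paragraph; the natural-looking alternative of using $\add(F,\mu)=\bigcap_{T}\overline{\{\F^t\mu:t\ge T\}}$ would present $d_{\add(F,\mu)}$ as a $\sup_T\inf_{t\ge T}$ of functions that are themselves only decreasing limits of computable functions, forcing an unnecessary rearrangement of that double limit into $\liminf$ form — collapsing the two quantifiers into a single $\liminf_t$ at the outset avoids all of that.
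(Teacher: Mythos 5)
Your proof is correct and follows essentially the same route as the paper: both take $f_t(\nu)=\dm(\F^t\mu,\nu)$, observe sequential computability from Proposition~\ref{prop:OneStep}, get uniform equicontinuity from the $1$-Lipschitz bound, and conclude via the identity $d_{\add(F,\mu)}=\liminf_t f_t$. You simply supply more detail (the cluster-point identity and the series truncation) than the paper, which states these steps without proof.
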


\begin{remark}
If a $\Pi_2$-computable closed set of measures is reduced to a single measure, then this measure is limit-computable. Thus Proposition~\ref{prop:add=s2ccc} implies Proposition~\ref{prop:OneStep}.
\end{remark}

\begin{proof}
 Let $f_n:\nu\longmapsto\dm(\F^n\mu,\nu)$. Since $\mu$ is computable, $(f_n)_{n\in\N}$ is sequentially computable. Moreover
$|f_n(\nu)-f_n(\nu')|=|\dm(\F^n\mu,\nu)-\dm(\F^n\mu,\nu')|\leq\dm(\nu,\nu')$ so $(f_n)_{n\in\N}$ is computably uniformly
equicontinuous. The result follows from the fact that $d_{\V(F,\mu)}(\nu)=\liminf_{n\to\infty}\dm(\F^n\mu,\nu) =
\sup_m\inf_{n>m}f_n(\nu)$, using Proposition~\ref{prop:EquivComput}.
 
The same reasoning holds for $\V'(F,\mu)$.
\end{proof}

\begin{remark}
When the initial measure is not computable, it can be used as an oracle. These obstructions will be generalised accordingly in Section~\ref{section:Oracle}. 
\end{remark}

\subsection{\texorpdfstring{Technical characterisation of $\Pi_2$-computable compact connected sets}{Technical
characterisation of Pi2-computable compact connected sets}}

$\Pi_2$-computable compact set of measures can be described as the limit points of a sequence $(\meas{w_n})_{n\in\N}$
corresponding to some uniformly computable sequence of words $(w_n)_{n\in\N}$. However, for technical reasons, 
the $\mu$-limit measures set of the construction presented in
Section~\ref{section:Construction} corresponds to the limit set of an infinite polygonal path composed of segments of
the form \[\left[\meas{u},\meas{v}\right]=\left\{t\meas{u}+(1-t)\meas{v}:t\in[0,1]\right\} \subset\Ms(\az)\] where
$u,v\in\A^{\ast}$, and is in particular connected. This is why we describe in the following proposition how compact, $\Pi_2$-computable, 
connected sets can be covered by a polygonal path corresponding to a uniformly computable sequence of words.

\begin{definition}
Let $(w_n)_{n\in\N}$ be a sequence of words of $A ^{\ast}$. Denote $\V((w_n)_{n\in\N})$ the \define{limit points of
the polygonal path} defined by the sequence of measures $(\meas{w_n})_{n\in\N}$:
\[\V((w_n)_{n\in\N})=\bigcap_{N>0}\overline{\bigcup_{n\geq N}\left[\meas{w_n},\meas{w_{n+1}}\right]}.\]
\end{definition}

\begin{proposition}\label{prop:polygonalcover}
 Let $\K\subset\Ms(\az)$ be a non-empty $\Pi_2$-computable, compact, connected set ($\Pi_2$-CCC for short). Then
there exists a uniformly computable sequence of words $(w_n)_{n\in\N}$ such that $\K =\V((w_n)_{n\in\N})$.
\end{proposition}

\begin{proof} By Proposition~\ref{prop:EquivComput} there is a uniformly computable sequence of functions $(f_n)_{n\in\N}$
satisfying $\K = f^{-1}(\{0\})$ where $f=\inf_{n\in\N} f_n$. Let $a:\N\times\N\times\A^{\ast}\to\Q$ and  $b:\N\to\Q^+$
be the computable functions given by Definition~\ref{def:sigma2compact}. Without loss of generality, we can assume that $b$ is a decreasing function and $b(i)\underset{i\to\infty}{\longrightarrow}0$.

For $k\in\N$, define:

\[\alpha_k^t = \min\{\ell\leq t\ :\ \forall u\in \A^{\leq t}, \exists w\in\A^{\leq \ell},\dm(\meas u,\meas w)\leq
b(k)\}\]
\[\alpha_k = \min\left\{\ell\in\N\ :\ \Ms(\az)=\bigcup_{u\in\A^{\leq \ell}}\Ball(\meas{u},b(k))\right\}\]
\[\Vb_k^t = \left\{u\in\A^{\leq \alpha_k^t} :\exists n\leq t \textrm{ such that } a(n,k,u)<\frac{2}{k}\right\}\]
\[\Vb_k = \left\{u\in\A^{\leq \alpha_k} : \exists n\in\N  \textrm{ such that }a(n,k,u)<\frac{2}{k}\right\}\]
Since the periodic measures are dense in $\Ms(\az)$, we have $\alpha_k^t = \alpha_k$ when $t$ is large enough. Furthermore $\alpha_k\underset{k\to+\infty}\longrightarrow +\infty$. 

Moreover, for $u\in\A^{\leq \alpha_k}$, if $f(\meas{u})<\frac{1}{k}$ there exists $n\in\N$ such that $f_n(\meas{u})<\frac{1}{k}$ so $a(n,k,u)<\frac{2}{k}$ which implies that $u\in\Vb_k$. Conversely, if $u\in\Vb_k$ then there exists $n\in\N$ such that $a(n,k,u)<\frac{2}{k}$ so $f(\meas{u})\leq f_n(\meas{u})\leq a(n,k,u)+\frac{1}{k}\leq\frac{3}{k}$.

\Claim{$\Vb_k^t$ is increasing with regards to $t$ and there exists $T_k$ such that $\Vb_k^{T_k}=\Vb_k$. 
Furthermore, the function $(k,t,w) \to 1_{\Vb_k^t}(w)$ is computable.}
\bclaimprf
For all $k$ and $t$, $\Vb_k^t \subset \Vb_k^{t+1}$. Furthermore, if $w\in \Vb_k$, then $w\in\Vb_k^t$ for $t$ large enough. 
Since $\Vb_k$ is finite, there is a $T_k$ such that $\Vb_k = \Vb_k^{T_k}$. 

The conditions for being included in $\Vb_k^t$ can be
checked by computing computable functions over a finite range of values, so $(k,t,w) \longmapsto 1_{\Vb_k^t}(w)$ is
computable.
\eclaimprf

Notice that the $T_k$ are not necessarily computable, which means that even though each $\Vb_k$ is finite, there is not
necessarily a way to know when the enumeration is finished. 

\Claim{\[\K = \bigcap_k\bigcup_{u \in \Vb_{k}}\Ball\left(\meas{u}, b(k)\right).\]}
\bclaimprf
For each element $\mu\in\K$ and $k\in\N$, there is an element $u_k\in\A^{\leq \alpha_k}$ such that
$\dm(\mu,\meas{u_k})\leq b(k)$, and therefore $f(\meas{u_k})\leq\frac{1}{k}$. Thus, there is $m\in\N$ such that $f_{m}(\meas{u_k})<\frac{1}{k}$. One deduces that
$a(m,k,u_k)\leq f_{m}(\meas{u_k})+\frac{1}{k}< \frac{2}{k}$, which means that $u_k \in \Vb_k$.  In other words, 
\[\forall k\in\N, \K \subset \bigcup_{u \in \Vb_{k}}\Ball\left(\meas{u}, b(k)\right).\]

Conversely, let $\mu\in \bigcap_k\bigcup_{u \in \Vb_{k}}\Ball\left(\meas{u}, b(k)\right)$. For all $k\in\N$, there exists $u_k\in \Vb_{k}$ such that $\mu\in\Ball\left(\meas{u_k}, b(k)\right)$. Following the same reasoning as before, there exists $n\in\N$ such that $a(n,k,u_k)\leq\frac{2}{k}$ and so
\[f(\mu)\leq f(\meas{u_k})+\frac{1}{k}\leq f_n(\meas{u_k})+\frac{1}{k}\leq a(n,k,u_k)+\frac{2}{k}\leq\frac{4}{k}.\]
 We conclude that $f(\mu)=0$ so $\mu\in\K$.
\eclaimprf

We introduce Algorithm~\ref{algo.enumSequ} for computing the sequence $(w_n)_{n\in\N}$ which realizes $\K$ as the limit points of the polygonal path defined by $(\meas{w_n})_{n\in\N}$.

\begin{algorithm}[H]\label{algo.enumSequ}
\KwData{An algorithm computing $(k,t,w) \longmapsto 1_{\Vb_k^t}(w)$}
\KwResult{Enumeration of the sequence $(w_n)_{n\in\N}$}

\begin{center}\rule{3cm}{0.7pt}\end{center}

$n \gets 0$\;
1.\ \For{$t\in\N$, by increasing order}{
2.\ \For{$k\leq t$, by increasing order}{
3.\ \For{each element $w \in \Vb_k^t$}{
\eIf{$n= 0$}{
$w_0 \gets w$\;
$n\leftarrow 1$\;
Go to the next element of $\Vb_k^t$\;}
{ \eIf{$t>0$ and $w \in \Vb_k^{t-1}$}{
Go to the next element of $\Vb_k^t$\;}{
4.\ \For{$i\leq k$, by decreasing order}{
Enumerate all finite sequences without repetition $u_1, \dots, u_{l-1} \in \Vb_i^t$\;
\If {a \= path $w_{n-1}= u_0, u_1,\dots, u_l = w$ with $\dm(u_k,u_{k+1})\leq 4b(i)$ is
found}{
$w_n\gets u_1, \dots, w_{n+l-1}\gets u_l$ and $n\gets n+l$\;
Go to the next element of $\Vb_k^t$\;}
\If {no such path was found for any $i$}{
$w_n \gets w$, $n\gets n+1$\; 
Go to the next element of $\Vb_k^t$\;}
}}}}}}

\caption{Enumeration of the sequence $(w_n)_{n\in\N}$.}
\label{algorithm.CheckSeq}
\end{algorithm}

Notice that in the fourth loop, if a path is found, then it corresponds to the largest $i\leq k$ for which such a path
exists. Now we prove the correctness of this algorithm. First notice that all elements of all $\Vb_k$ is enumerated in $(w_n)_{n\in\N}$.

\Claim{
 If $\mu\in\K$, then $\mu\in \V((w_n)_{n\in\N})$.
} 

\bclaimprf 
By Claim~2, there is a sequence of words $(u_k)_{k\in\N}$ such that $u_k\in\Vb_k$ and
$\dm(\meas{u_k},\mu)<b\left(k\right)$ for all $k\in\N$. So $u_k$ appears at some point in the sequence $(w_n)_{n\in\N}$ for every $k\in\N$. We conclude that 
$\lim_{k\to\infty}\meas{u_k}=\mu\in\V((w_n)_{n\in\N})$.
\eclaimprf

\Claim{For every $\varepsilon >0$, there exists a $t_\varepsilon$ such that in the previous algorithm, if $t'\geq t\geq
t_\varepsilon$, $w \in \Vb^{t+1}_k\backslash\Vb^t_k$ and $w’ \in\Vb^{t'+1}_{k'}\backslash\Vb^{t'}_{k'}$, then the path
$w=u_0, \dots ,w’=u_l$ built in the fourth loop satisfies $d_\K(\nu)\leq \varepsilon$ for all $\nu\in\bigcup_{0\leq
i<l}[\meas{u_i},\meas{u_{i+1}}]$.}

\bclaimprf 
Let $\varepsilon>0$, there exists by compacity a $n_\varepsilon\in\N$ such that $f(\mu)\leq \frac {4}{n_\varepsilon}\Longrightarrow d_\K(\mu)\leq \varepsilon$. Iterating the same argument, there exists $k_\varepsilon\geq 3 n_{\varepsilon}$ such that $f(\mu)\leq \frac {4}{k_\varepsilon}\Longrightarrow d_\K(\mu)\leq b(n_{\varepsilon})$. 

Let $t_\varepsilon = \max_{0\leq i\leq k_\varepsilon}(T_i)$ and assume $w \in \Vb^{t+1}_k\backslash\Vb^t_k$ and $w’ \in \Vb^{t'+1}_{k'}\backslash\Vb^{t'}_{k'}$ with $t'\geq t\geq t_\varepsilon$. By definition of the $T_i$, we have $\Vb^{t_\varepsilon}_n = \Vb^t_n=\Vb^{t’}_n$ for all $n\leq k_\varepsilon$. For $w$ and $w’$ to be chosen by the algorithm, we must have $k\geq k_\varepsilon$ and $k'\geq k_\varepsilon$ with $w\in\Vb_k$ and $w’\in\Vb_{k’}$ so $f(\meas{w})<\frac {3}{k_\varepsilon}<\frac{1}{n_\varepsilon}$ and $f(\meas{w’})<\frac {3}{k_\varepsilon}<\frac{1}{n_\varepsilon}$, thus $w,w’\in\Vb_{n_\varepsilon}$. Moreover, by definition of $k_\varepsilon$, $d_\K(\meas{w})\leq b(n_\varepsilon)$ and $d_\K(\meas{w’})\leq b(n_\varepsilon)$.

Therefore $\bigcup_{u \in\Vb_{n_\varepsilon}}\Ball\left(\meas{u}, b(n_\varepsilon)\right)$ contains $\meas{w}$ and $\meas{w’}$ as well as $\K$ in a single connected component, since $\K$ is connected. This means that in the fourth loop of the algorithm, a path can be found for some $i\geq n_\varepsilon$. The path is entirely included in $\bigcup_{u \in \Vb_{i}}\Ball\left(\meas{u},b(i)\right)$. For $\nu\in\bigcup_{u \in \Vb_{i}}\Ball\left(\meas{u},b(i)\right)$, there exists $u \in \Vb_{i}$ such that $f(\nu)\leq f(\meas{u})+\frac{1}{i}\leq\frac{4}{i}\leq\frac {4}{n_\varepsilon}$ so 
 $d_\K(\nu)\leq \varepsilon$ by definition of $n_\varepsilon$. The result follows.
\eclaimprf

\Claim{If $\mu\in \V((w_n)_{n\in\N})$, then $\mu\in\K$.}
\bclaimprf
Take any $\varepsilon>0$, and wait that the first loop reaches the value $t = t_\varepsilon$ where $t_\varepsilon$ is defined in Claim~4. 
At some point, a new element $w_n$ will be found in the third loop and it will be added to the sequence already built 
(with a path of words before it). By construction, $w_n\in\Vb_k^t$ for some $t\geq t_\varepsilon$, and the same is true for 
any element found in the third loop from now on.

By Claim~4, this means that any pair of elements $(w_k, w_{k+1})$ with $k\geq n$ added in the sequence from now on satisfies 
$\forall \nu\in[\meas{w_k},\meas{w_{k+1}}],\ d_\K(\nu)\leq \varepsilon$. This is true for all $\varepsilon>0$, so any accumulation point 
of the polygonal path $\overline{\bigcup_{n\geq N}[\meas{w_n},\meas{w_{n+1}}]}$ is included in $\K$.
\eclaimprf
\end{proof}

\section{Construction of a cellular automaton realising a given set of measures}\label{section:Construction}

In this section, we prove a reciprocal to the computability obstructions of
Proposition~\ref{prop:OneStep} and a partial reciprocal to Proposition~\ref{prop:add=s2ccc} using
Proposition~\ref{prop:polygonalcover}. Given an
uniformly computable sequence of words $(w_n)_{n\in\N}$ in $\B^{\ast}$, we construct a cellular automaton realising
$\V((w_n)_{n\in\N})$ as its $\mu$-limit measures set. We remind that $\V((w_n)_{n\in\N})$ is defined as the set of
limit points of the polygonal path defined by the sequence of measures $(\meas{w_n})_{n\in\N}$:
\[\V((w_n)_{n\in\N})=\bigcap_{N>0}\overline{\bigcup_{n\geq N}\left[\meas{ w_n},\meas{w_{n+1}}\right]}.\]
\begin{theorem}[Realisation of a computable polygonal path of measures]\label{MainTheorem}~\\
Let $(w_n)_{n\in\N}$ be a uniformly computable sequence of words of $\B^{\ast}$, where $\B$ is a finite alphabet. Then there is a
finite alphabet $\A\supset \B$ and a cellular automaton $F : \az\to \az$ such that:
\begin{itemize}
 \item for any measure $\mu \in \Mmixfull(\az)$, $\V(F,\mu) = \V((w_n)_{n\in\N})$.
 \item if $\V((w_n)_{n\in\N})=\{\nu\}$, then for any measure $\mu \in \Mergfull(\az)$,
$\F^t\mu\underset{t\to\infty}\longrightarrow \nu$.
\end{itemize}
\end{theorem}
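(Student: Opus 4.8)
The plan is to build $F$ as a "self-organizing computer": starting from a configuration typical for a $\psi$-mixing full-support measure $\mu$, the automaton first erases the randomness and partitions the line into segments, then runs a simulation of the Turing machine computing $n\mapsto w_n$ inside each segment, and finally overwrites most of each segment with a periodic repetition of the current word $w_n$. As $t$ grows, the segments are progressively merged so that the computable region becomes unbounded, $n$ grows without bound, and $\F^t\mu$ is forced to follow (a neighbourhood of) the polygonal path through $\bigl(\meas{w_n}\bigr)_{n\in\N}$. The first step would be to fix the alphabet $\A = \B \sqcup \Kset$ where $\Kset$ is a set of auxiliary "computing" states (walls, heads, tape symbols, counters, etc.), so that the visible part of a formatted segment uses only letters of $\B$.

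**Key steps, in order.** First I would implement the \emph{initialization/formatting} phase: using a method in the spirit of \cite{Delacourt-Poupet-Sablik-Theyssier-2011}, signals detect and destroy the random content, place wall markers \wall{} at a controlled (eventually sparse, but initially dense) set of positions, and turn each inter-wall interval into a clean segment with a small \emph{computing zone} near its left wall (containing a start marker \start{}) and a large \emph{copy zone} filled by periodic copies of a word. Second, the \emph{computation} phase: inside the computing zone of a segment of current length $\ell$, simulate the Turing machine for $n\mapsto w_n$ for as many steps as the available space/time budget allows; a counter tracks which index $n$ has been reached, and a \copie{} mechanism broadcasts the freshly computed $w_n$ rightwards, overwriting the copy zone so that on the copy zone the configuration locally looks like $\mathstrut^{\infty}w_n^{\infty}$. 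Third, the \emph{merging} phase: walls are removed one at a time following a schedule (e.g. a wall at position roughly $p$ disappears around time proportional to some fast-growing function of $p$) so that the segment containing the origin has length tending to infinity; after each merge the segment is re-formatted (keeping the already-computed index) and computation resumes with more room, so $n\to\infty$. Fourth, a \emph{synchronization} layer (using marching signals between consecutive walls, à la firing-squad) that guarantees every segment does its copy-update and its merge at coordinated times, so that at any sufficiently late time $t$ the whole configuration is, outside a density-zero set of "junk" cells near walls and heads, a concatenation of blocks each equal to a power of $w_{n(t)}$ or transitioning between $w_{n(t)}$ and $w_{n(t)+1}$. Fifth, the \emph{measure-convergence argument}: because $\mu$ is $\psi$-mixing with full support, the $\psi$-mixing coefficients control correlations across walls, and full support guarantees that with $\mu$-probability $1$ the formatting phase succeeds everywhere and every segment is "generic"; a Borel–Cantelli / ergodic-type estimate then shows $\dm\!\bigl(\F^t\mu,\, \meas{w_{n(t)}}\bigr)\to 0$ along the sub-intervals where $n$ is constant, and $\dm\!\bigl(\F^t\mu,\, [\meas{w_{n(t)}},\meas{w_{n(t)+1}}]\bigr)\to 0$ during the copy-transitions; combined with $n(t)\to\infty$ this yields $\add(F,\mu)=\add\bigl((w_n)_{n\in\N}\bigr)$. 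Finally, for the singleton case I would observe that if $\add\bigl((w_n)_{n\in\N}\bigr)=\{\nu\}$ then $\meas{w_n}\to\nu$, so $n(t)\to\infty$ already forces $\F^t\mu\to\nu$; here ergodicity (rather than $\psi$-mixing) of $\mu$ suffices because one only needs the frequency of each finite pattern inside a long generic segment to converge to its $\nu$-value, which follows from the ergodic theorem applied to the formatting map, without needing quantitative decorrelation across walls.

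**Main obstacle.** The genuinely hard part is \textbf{synchronization and the choice of the merging schedule}: one must ensure simultaneously that (i) each segment always has enough time to recopy $w_{n}$ over its whole copy zone \emph{before} the next merge or the next index increment, so that no "stale" mixture of $w_{n-1}$ and $w_{n}$ survives on a positive-density set; (ii) the junk produced by walls, heads, merging signals and re-formatting occupies a set of cells of density $\to 0$ uniformly, which requires the distance between surviving walls to grow fast while the computing zone stays a vanishing fraction of each segment; (iii) the index $n(t)$ is eventually the same across all segments intersecting a large window, so the global configuration is genuinely close to $\meas{w_{n(t)}}$ and not an incoherent patchwork; and (iv) no pattern not appearing in any $w_n$ is created in the limit — in particular the transition regions between $\mathstrut^{\infty}w_n^{\infty}$ and $\mathstrut^{\infty}w_{n+1}^{\infty}$ must be arranged so that their contribution to $\F^t\mu$ stays inside the segment $[\meas{w_n},\meas{w_{n+1}}]$ up to vanishing error. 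Getting a single timing function that makes all four hold at once, and proving it does, is where the bulk of the work lies; the probabilistic step, once the deterministic "skeleton" of the construction behaves as intended on a full-measure set of initial configurations, is comparatively routine, relying on $\psi$-mixing exactly as Gács-style constructions rely on independence.
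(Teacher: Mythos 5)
Your proposal follows essentially the same route as the paper: self-organization into segments delimited by walls born from a special initial symbol, destruction of non-initialized junk, Turing-machine computation of $w_n$ in a sublinear zone, periodic copying of $w_n$ over the rest of the segment, and a merging schedule making segment lengths tend to infinity, with $\psi$-mixing used exactly where you place it, namely to control the distribution of segment lengths. You also correctly isolate the main difficulty (a single timing function reconciling copy completion, junk density, index coherence and transition control); this is indeed where most of the paper's work goes, via the choice $T_n-T_{n-1}=q^{\lfloor\sqrt n\rfloor}$, the notion of an \emph{acceptable} segment of length between $n$ and $K_n=\sqrt{T_{n+1}-T_n}$ at time $t\in[T_n,T_{n+1}]$, and the estimate that the origin lies in an acceptable segment with probability tending to $1$.

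Two details of your sketch would fail as literally stated. First, the suggested merging schedule (``a wall at position roughly $p$ disappears around time proportional to some fast-growing function of $p$'') cannot be implemented: a cellular automaton commutes with the shift and has no access to absolute positions, so the schedule must be keyed to locally measurable data. The paper merges, at time $T_n$, every segment of length exactly $n$ into its left neighbour, the length being measured by a signal bouncing off the right wall; this keeps the rule shift-invariant and guarantees that no segment of length less than $n$ survives past $T_n$. Relatedly, synchronization is obtained not by a firing-squad layer but by having every initialized wall carry a binary counter recording the absolute time, so that all segments act simultaneously at the prescribed times $T_n$ (the counters also serve to detect and destroy non-initialized walls). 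Second, your justification for why ergodicity suffices in the singleton case is not quite the right one: the obstruction without $\psi$-mixing is not pattern frequencies inside a generic segment but the loss of any upper bound on segment lengths, so that a too-long segment at time $t$ simultaneously contains strips of copies of $w_n, w_{n+1},\dots$ (the copy of $w_n$ never finishes before the next index increment). One can then only show that $\F^t\mu$ approaches the convex hull of $\left\{\meas{w_i} : i\geq n\right\}$, which collapses to $\{\nu\}$ precisely when $\add((w_n)_{n\in\N})$ is a singleton; this is also why the weaker hypothesis cannot be kept for general limit sets.
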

Furthermore we get an explicit bound for the convergence rate in the first point of the theorem. Assume that $w_n$ is computable in space $O(\sqrt{n})$ 
(by repeating elements of the sequence $(w_n)_{n\in\N}$ if necessary), one has:
\[\dm(\F^t\mu,\V((w_n)_{n\in\N}))=\ O\left(\frac{1}{\log(t)}\right)+\sup\left\{\dm(\nu,\V((w_n)_{n\in\N}))\ :\
\nu\in\bigcup_{n\geq C(\log t)^2}[\meas{w_n},\meas{w_{n+1}}]\right\} \]
for some constant $C>0$. The first term of the upper bound corresponds to the intrinsic limitations of the construction, 
the second term depends on the speed of convergence of the polygonal path defined by $\meas{w_n}$ towards $\V((w_n)_{n\in\N})$, 
which is intuitively the quality of the approximation of $\V(F,\mu)$ by a computable path.

This construction has many applications detailed in Section~\ref{section:RelatedProblem}. We just mention here Corollary~\ref{cor:ConvSetMeasure} which says that every compact, $\Pi_2$-computable and connected subset of $\Ms(\B^\Z)$ can be obtained as the $\mu$-limit measures set of a cellular automaton $F:\az\to\az$ for any $\mu \in \Mmixfull(\az)$.
  
In the rest of this section, we detail the construction of this cellular automaton and prove the correctness of the construction in  Subsection~\ref{section:correction}.

\subsection{Overview of the construction}

This section presents a sketch of the construction. The alphabet $\A$, where is defined the cellular automaton, contains a symbol $\wall$ (for \define{wall}) persisting in time, except under special
circumstances, defining independent areas of computation (\define{segments}). Independently in each segment, three tasks are performed in parallel:
\begin{description}
\itemsep0em
 \item[Formatting] the initial contents of the segment are erased;
 \item[Computation and copy] each word $w_i$ is successively computed and concatenated copies of it are written on the whole segment;
 \item[Merging] the length of the segment is checked at regular intervals, and it merges with the segment to its right if it is too small.
\end{description}

The key task is the second, since the goal of the construction is that $\F^t\mu$ gets close to each measure $\meas{w_i}$ successively.
This requires that the computation is performed synchronously between all segments, so that each segment contains copies of the same $w_i$ 
at the same instant. To do this, we define another symbol $\start$ (\define{init}), which appears
only in the initial configuration, creating a wall and initialising computation and auxiliary processes. 
This process is detailed in Section~\ref{section:bootstrapping}.

Any symbol or process created in this way is referred to as \define{initialised} ; uninitialised processes are those already present in the initial configuration over which we have no control, and that we wish to erase. In particular, \define{uninitialised walls} are not considered as valid segment borders. \bigskip

Apart from $\start$ and $\wall$, the new alphabet $\A$ is divided in different layers: 
the \define{main layer} where the words $w_n$ are output and copied out, and
\define{auxiliary layers} where computation and other processes take place. This allows to perform all tasks in parallel.

\paragraph{Formatting}~Since we have no control over the initial contents of each segment, we first want to \define{format} 
the segment, that is, to erase uninitialised walls and uninitialised contents of the auxiliary layers.

Most processes defined below are designed to self-destruct when they are not initialised. This is detailed as each new process is introduced.
The difficult task is to distinguish uninitialised walls from initialised walls. 

To do that, each initialised wall sends to its right
a signal on a specific layer progressing at speed one (\define{formatting counter} - see
Section~\ref{section:formatting}), that keeps track of its age using a binary counter. 
Meanwhile, each initialised walls also keeps track of its age under the form of a binary counter on
another layer, to its left, incrementing at each step (\define{time counter} - see Section~\ref{section:time}).

Time and formatting counters already present in the initial configuration (uninitialised) have a nonnegative value at
time 0, whereas those created by an $\start$ symbol (initialised) have value 0 at time 1, and they increment at the same
rate. Thus, uninitialised walls have older time counters, and by comparing time counters and formatting counters as they
cross, we can erase older counters and uninitialised walls. Figure~\ref{figure:initialisation} is an overview
of those processes.

\begin{figure}[!ht]
\begin{center}
 \begin{tikzpicture}
 \fill[black!20!white] (-3,0) rectangle (11,6);

 \foreach \a in {0,8}
   {
   \fill[white] (\a,0) -- (\a+4,3) -- (\a+4,6.05) -- (\a,6.05) -- cycle;
   \fill[white] (\a,0) -- (\a-.5,0) arc (270:180:1.5 and 6.05) -- (\a,6.05) --
cycle;
   \draw[black, very thick] (\a,0) -- (\a,6);
   \draw[blue, thick] (\a,0) arc (270:190:1.5 and 7.25);
   \fill[blue, pattern = north east lines] (\a,0) arc (270:190:1.5 and 7.25) --
(\a,6) -- cycle;
   \filldraw[fill=white] (\a-0.17,-0.17) rectangle (\a+0.17,0.17);
   \draw (\a,0) node {\small $I$};
   }
\filldraw[fill=white] (8.3,-0.17) rectangle (8.64,0.17);
\draw (8.47,0) node {\small $I$};
\draw (7.7, -0.3) rectangle (8.77,0.3);
\draw (8.47,-0.5) node {\small{see Section~\ref{section:bootstrapping}}};
\fill[white] (2.55,4.09) -- (5.21, 6.05) -- (2.55,6.05) -- cycle;
\draw[red,thick] (0.2,0.15) -- (4,3) -- (2.55,4.09) -- (5.21, 6);
\draw[red,thick] (8.2,0.15) -- (11,2.25);
\draw[red,thick] (6,0) -- (8,1.5) -- (6.9,2.325);
\fill[blue, pattern = north east lines] (4,0) -- (3.5,0) arc (240:185:2.2 and
7.7) -- (4,5.11) -- cycle;
\draw[blue, thick] (3.5,0) arc (240:185:2.2 and 7.7);
\draw[black, very thick] (4,0) -- (4,5.11);

%Rectangles
\draw (-1.4,-0.5) rectangle (1.4,2.3);
\draw (-1.8,0.9) node {\rotatebox{90}{\small{Figure~\ref{figure:time} and
\ref{figure:formatting}}}};

\draw (2,2.4) rectangle (4.8,5.2);
\draw (1.7,3.8) node {\rotatebox{90}{\small{Figure~\ref{figure:comp1}}}};

\draw (6,0.6) rectangle (8.4,3);
\draw (5.7,1.8) node {\rotatebox{90}{\small{Figure~\ref{figure:comp2}}}};

 \end{tikzpicture}
\end{center}
\caption{Sketch of the bootstrapping and formatting processes. Vertical lines are walls. Dashed parts contain time
counters (section \ref{section:time}) and Turing machines (section \ref{section:computation}).
Slanted lines are formatting counters (section \ref{section:formatting}), white and grey areas are respectively formatted and
non-formatted.}\label{figure:initialisation}
\end{figure}

\paragraph{Computation and copy}~Meanwhile, on another layer, a Turing machine is simulated in the space delimited by the time counter.
This machine successively computes each $w_n$ 
(see Section~\ref{section:computation}) and writes concatenated copies on the main layer of the segment to its left (see
Section~\ref{section:copy}). For each $w_n$, this happens synchronously on the whole configuration, so as to approach the measure $\meas{w_n}$. 

\paragraph{Merging}~Synchronously at some time $T_n$, segments of a given length $n$ are merged with their left neighbour. This allows us to enlarge computational space and decrease the density of cells with nonempty auxiliary layers, so that they do not appear in the limit measure (see Section~\ref{section:merge}). To determine the length of its right segment, each wall sends a
signal to the right on a dedicated layer that bounces off the next wall and counts the return time.
Figure~\ref{figure:CopyMerge} is an overview of copy and merging processes.

\begin{figure}[!ht]
\begin{center}
 \begin{tikzpicture}
\fill[black!20!white] (8,0) -- (8,1) -- (5,4) -- (5,1) -- (3.5,2.5) -- (3.5,1)
--
(2,2.5) -- (2,1) -- (0,3) -- (0,1) -- (-1,2) -- (-1,1) -- (-1,0) -- cycle;
\fill[black!20!white] (8,5) -- (6.5,6.5) -- (8,6.5) -- cycle (5,5) -- (3.5,6.5)
-- (5,6.5) -- cycle
(0,5) -- (-1,6) -- (-1,6.5) -- (0,6.5) -- cycle;
\foreach \a/\b in {0/6.5,2/5,3.5/5,5/6.5,8/6.5}
   {
   \draw[black, very thick] (\a,0) -- (\a,\b);
   }
\draw[red,very thick] (2,1) -- (0,3) (3.5,1) -- (2,2.5) (5,1) -- (3.5,2.5) (8,1)
-- (5,4)
(0,1) -- (-1,2) (0,5) -- (-1,6) (5,5) -- (3.5,6.5) (8,5) -- (6.5,6.5);
\draw[dotted,very thick] (0,1) -- (2,3) -- (0,5) (2,1) -- (3.5,2.5) -- (2,4)
(3.5,1) -- (5,2.5) -- (3.5,4) (5,1) -- (8,4) -- (6,6) (5,5) -- (6.5,6.5);

\draw (1.75,4) rectangle (2,4.25);
\draw (1.875,4.1) node {\tiny M};
\draw[->, dotted] (1.875,4.25) -- (1.875,4.7);
\draw (1.75,4.75) rectangle (2,5);
\draw (1.875,4.85) node {\tiny M};

\draw (3.25,4) rectangle (3.5,4.25);
\draw (3.375,4.1) node {\tiny M};
\draw[->, dotted] (3.375,4.25) -- (3.375,4.7);
\draw (3.25,4.75) rectangle (3.5,5);
\draw (3.375,4.85) node {\tiny M};

\draw[<->] (3.5,0.2) -- (5,0.2);
\draw (4.25,-0.2) node {$n$};
\draw[->] (-1,-0.5) -- (-1,7);
\draw (-1,7.5) node {time};
\draw (-1.5,1) node {$T_{n-1}$};
\draw (-1.4,5) node {$T_n$};
\draw (-1.1,1) -- (-0.9,1) (-1.1,5) -- (-0.9,5);
\draw[dotted] (-1,1) -- (9,1) (-1,5) -- (9,5);

%Rectangles
\draw (0.9,0.9) rectangle (4,4);
\draw (0.7,2.5) node {\rotatebox{90}{\small{Figure~\ref{figure:merge}}}};

\draw (6.4,0.6) rectangle (8.4,2.6);
\draw (6.2,1.6) node {\rotatebox{90}{\small{Figure~\ref{figure:copy}}}};

\draw (5.4,5.2) rectangle (6.8,6.6);
\draw (6.1,7.2) node {\footnotesize{Newer merging signals}};
\draw (6.1,6.85) node {\footnotesize{erase older signals}};
 \end{tikzpicture}
\end{center}
\caption{Sketch of the copying and merging processes. Here all walls are
initialised. Slanted thick lines are copy processes (see Section
\ref{section:copy}), slanted dotted lines are merging signals (see Section
\ref{section:merge}).}\label{figure:CopyMerge}
\end{figure}

\paragraph{Alphabet}~We obtain an enlarged alphabet $\A = \left\{\start, \wall\right\}  \cup \alph{main}  \times \alph{comp} 
\times \alph{time} \times \alph{format} \times \alph{copy} \times \alph{merge}$. All those alphabets contain a
symbol $\#$ (blank) representing the absence of information.
\begin{itemize}
 \item $\start$ and $\wall$ are the two above-mentioned symbols;
 \item $\alph{main} = \B\cup \{\#\}$ is the layer on which $w_n$ is output and then copied out;
 \item $\alph{comp}$ is the layer where Turing machines are simulated to compute $w_n$ and other processes;
 \item $\alph{time}$ is the layer on which time counters are incremented; 
 \item $\alph{format}$ is the layer on which formatting counters move and are incremented, and where comparisons are
done;
 \item $\alph{copy}$ is a layer used in the process of writing copies of the output on the main layer;
 \item $\alph{merge}$ is a layer used in the process of merging two segments.
\end{itemize}

We have $\B \subset \A$ up to the identification $b \Longleftrightarrow (b, \#, \#, \#, \#, \#)$.
If $u \in \A$, denote $\texttt{main}(u)$, resp. $\texttt{comp}(u),\ \texttt{time}(u)$\dots the projections on each layer (the result being $\#$
on $\start$ and $\wall$). \bigskip

We detail the different alphabets in the following sections. As we will see, our construction needs interactions
at a distance at most three, so we can take $\U_F = \{-3,\dots,3\}$ as the neighbourhood of the local rule of $F$.

\subsection{Formatting the segments}

\subsubsection{Bootstrapping}\label{section:bootstrapping}
If two symbols $\start$ are separated by two cells or less, the rightmost one is destroyed. Otherwise, every $\start$ symbol
turns into a $\wall$, erases the contents of three cells to their right and left (including walls),
and initialises on its left a computation process and a time counter, and on its right a formatting counter. No more $\start$ or $\wall$ symbols can be created. 

\begin{definition}
 Let $x\in\az$. The set of positions $[i,j]$ is a \define{segment at time 0} if $x_i$ and $x_j$ are the symbol $\start$ and this symbol does not appear for intermediate coordonate in $]i,j[$. It is a 
\define{segment at time $t$} if $F^t(x)_i$ and $F^t(x)_j$ are initialized walls $\wall$ (that is to say $x_i=x_j=\start$) and there is not  initialized walls  between them at time $t$. Define the
\define{length} of this segment as $j-i-1$.
\end{definition}

 Walls persist over time and are only destroyed under three circumstances:
\begin{itemize}\itemsep0em
 \item if the time layer of the computation layer of the cell to its left is empty (so the wall must be uninitialised);
 \item by a growing time counter (see Section~\ref{section:time} and Facts~\ref{fact:detached}, \ref{fact:attached} and \ref{fact:erasingtime});
 \item by the merging process detailed in Section~\ref{section:merge}.
\end{itemize}

As the only exception, if a segment is of length three at time 0, then the leftmost $\start$ prevents the creation of a
time counter for the rightmost wall at time 1 and the wall itself is destroyed at time 2. Thus
segments have minimum length four from time 2 onwards.

\subsubsection{Counters}
All counters are binary in a redundant basis, so that they can be incremented by one at each step (keeping track of
current time) in a local manner. Notice that in the following two definitions, the indexing of the letters is inverted.

\begin{definition}[Redundant binary basis]
 Let $u = u_0\dots u_{n-1} \in \{0,1,2\}^*$. The \define{value} of $u$ is \[val(u) = \sum_{i=0}^{n-1}
u_i2^i.\]Since the basis is redundant, different words in $\{0,1,2\}^*$ can have the same value.
\end{definition}
\begin{definition}[Incrementation]\label{definition:incrementation}
 The incrementation operation $inc : \{0,1,2\}^*\longmapsto \{0,1,2\}^*$ is defined in the following way. If $u_{|u|-1} =
2$, then $|inc(u)| = |u|+1$, $|u|$ otherwise, and:
\[inc(u)_i = \left\{\begin{array}{ll}
1&\tx{if }i=|u|\tx{ and } u_{|u|-1} = 2;\\
u_i \mod 2 +1 & \tx{if }i=0\tx{ or } u_{i-1} = 2;\\
u_i \mod 2 & \tx{otherwise.}\end{array}\right.\]
\end{definition}

Intuitively,  the counter is increased by one at the rightmost bit and $2$ behaves as a carry propagating along the
counter. If the most significant bit was a carry, the length of the counter is increased by one. Thus:
\begin{fact}
 $val(inc(u)) = val(u)+1$.
\end{fact}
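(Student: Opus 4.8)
The plan is to read the definition of $inc$ as a single pass of carry resolution in the redundant base, and then to check the value identity by a telescoping sum in which every carry created is matched by a carry consumed one position higher. Write $n=|u|$ and introduce, for $0\le i\le n-1$, the \emph{carry generated at position $i$}, $d_i=1$ if $u_i=2$ and $d_i=0$ otherwise, together with the \emph{carry entering position $i$}, $c_i=1$ if $i=0$ or $u_{i-1}=2$ and $c_i=0$ otherwise (so $c_i=d_{i-1}$ for $i\ge 1$, with the convention $d_{-1}=0$, and $c_0=1$). With this notation Definition~\ref{definition:incrementation} reads compactly as $inc(u)_i=(u_i\bmod 2)+c_i$ for $0\le i\le n-1$, with a single most significant digit $inc(u)_{n}=d_{n-1}$ appended (this is exactly the branch $u_{|u|-1}=2$, which produces the extra digit precisely when $d_{n-1}=1$).

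The engine of the proof is the local conservation law
\[ u_i+c_i \;=\; inc(u)_i+2d_i \qquad (0\le i\le n-1), \]
which holds because $(u_i\bmod 2)+2d_i=u_i$ for every $u_i\in\{0,1,2\}$. This is nothing but the schoolbook relation ``digit plus incoming carry equals output digit plus twice the outgoing carry'', and the only place where a genuine unit is injected is the term $c_0=1$, encoding the increment itself.

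From here I would multiply the conservation law by the weight $2^i$, sum over $0\le i\le n-1$, and substitute $inc(u)_i=u_i+c_i-2d_i$ into
\[ val(inc(u))=\sum_{i=0}^{n-1}inc(u)_i\,2^i+d_{n-1}2^{n}. \]
Using $\sum_{i=0}^{n-1}c_i 2^i=1+\sum_{j=0}^{n-2}d_j 2^{\,j+1}$ and $\sum_{i=0}^{n-1}2d_i 2^i=\sum_{j=0}^{n-2}d_j 2^{\,j+1}+d_{n-1}2^{n}$, the two carry sums cancel term by term (each carry $d_j 2^{\,j+1}$ leaving position $j$ reappears as the incoming carry at position $j+1$), and the lone top carry $d_{n-1}2^{n}$ is cancelled by the appended digit. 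What remains is exactly $val(u)+1$.

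The whole argument is bookkeeping, so there is no deep obstacle; the only point that genuinely needs care is the boundary at the most significant digit, where the outgoing carry $d_{n-1}$ is not absorbed by a position inside $u$ but instead lengthens the word — which is precisely the role of the case $u_{|u|-1}=2$ in the definition. One should also dispose of the degenerate cases (the empty word, and words made entirely of $2$'s, whose carry travels all the way to a fresh leading digit) by direct inspection, and verify they agree with the telescoping identity. If one prefers to avoid the sum, the same per-digit conservation law supports a short induction on $|u|$: the base case $|u|=1$ is immediate for each of $u_0\in\{0,1,2\}$, and the inductive step peels off the lowest digit and passes its single carry $d_0$ into the truncated word.
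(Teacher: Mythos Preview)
Your argument is correct: the rewriting $inc(u)_i=(u_i\bmod 2)+c_i$ with $c_0=1$, $c_i=d_{i-1}$ for $i\ge 1$, and $inc(u)_n=d_{n-1}$ is exactly Definition~\ref{definition:incrementation}, and the per-digit identity $u_i+c_i=inc(u)_i+2d_i$ then telescopes cleanly to give $val(inc(u))=val(u)+1$. The boundary handling at the top digit is the only delicate point and you treat it properly.

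As for comparison with the paper: there is nothing to compare, because the paper does not prove this Fact. It simply states it after the one-line intuition that ``the counter is increased by one at the rightmost bit and $2$ behaves as a carry propagating along the counter''. Your conservation-law computation is precisely the formalisation of that sentence, so you have supplied the details the authors left implicit. The alternative induction on $|u|$ you sketch at the end would work equally well and is perhaps closer in spirit to what the paper has in mind. One small remark: the paper's definition of $val$ has an apparent index typo ($\sum_{i=1}^n u_i 2^i$ where $\sum_{i=0}^{n-1} u_i 2^i$ is intended); your proof uses the intended meaning, which is the right call.
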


Taking a symbol as spark where the counter is incremented, in our case the symbol $\wall$, and another one to precise the end of the word, in our case $\#$, this operation can be defined locally and can be seen as the local rule of a cellular automaton.

\subsubsection{Time}\label{section:time}
We use the alphabet $\alph{time} = \{0,1,2,\#\}$. In a configuration, a time counter is a word of maximal length
containing no $\#$ in the time layer. A time counter is \define{attached} if it is bounded on its right by a wall
$\wall$, \define{detached} otherwise. \bigskip

\begin{figure}[!ht]
\begin{center}
 \begin{tikzpicture}

\foreach \x\y\a in
{0/1/?,0/2/?,0/3/?,0/4/\#,0/9/\#,0/10/?,1/2/\#,1/3/\#,1/4/?,1/5/\#,1/9/\#,
1/10/\#,2/2/\#,2/3/\#,2/4/\#,2/5/\#,2/6/\#,2/10/\#,3/2/\#,3/3/\#,3/4/\#,3/5/\#,
3/6/\#,3/7/\#,3/10/\#,4/3/\#,4/4/\#,4/5/\#,4/6/\#,4/7/\#,4/8/\#,4/10/\#,5/3/\#,
5/4/\#,5/5/\#,5/6/\#,5/7/\#,5/8/\#,5/9/\#,5/10/\#,6/3/\#,6/4/\#,6/5/\#,6/6/\#,
6/7/\#,6/8/\#,6/9/\#,6/10/\#,7/4/\#,7/5/\#,7/6/\#,7/7/\#,7/8/\#,7/9/\#,7/10/\#}
  {
  \draw[dotted](-0.8*\y,0.8*\x) rectangle (-0.8*\y +0.8,0.8*\x +0.8);
  \draw(-0.8*\y+0.4,0.8*\x+0.4) node {\a};
  }

  \draw (0.4,0.4) node {I};
  \foreach \x in {1,...,7}
  {
  \filldraw[fill = black!50!white] (0,0.8*\x) rectangle (0.8,0.8*\x+0.8);
  \draw (0.4,0.8*\x+0.4) node {W};
  }
  \foreach \x\y\a in
{1/1/0,7/2/0,2/1/1,4/1/1,6/1/1,4/2/1,5/2/1,7/3/1,3/1/2,5/1/2,7/1/2,6/2/2,0/5/1,
0/6/0,0/7/2,0/8/1,1/6/0,1/7/0,1/8/2,2/7/0 ,2/8/0,2/9/1,3/8/0,3/9/1,4/9/1}
  {
  \draw[very thick] (-0.8*\y,0.8*\x) rectangle (-0.8*\y +0.8,0.8*\x +0.8);
  \draw (-0.8*\y+0.4,0.8*\x+0.4) node {$\a$};
  }
  \draw [very thick] (0,0) rectangle (0.8,0.8);
 \end{tikzpicture}
  \caption{A detached time counter, and a time counter attached to an initialised wall. Only the time layer is
represented. ? cells have arbitrary values.}
\label{figure:time}
 \end{center}
\end{figure}

At each step, attached counters are incremented by one while detached counters have their rightmost bit deleted (see
Figure~\ref{figure:time}). Indeed, detached counters are uninitialised and can be safely deleted. Formally,
\begin{itemize}
 \item if $u_1 = \wall$, then $\texttt{time}(F(u)_0)= \texttt{time}(u_0)\mod 2 +1$;
 \item if $\texttt{time}(u_1) = \#$, then $\texttt{time}(F(u)_0)=\#$;
 \item otherwise, follow the incrementation definition
(Definition~\ref{definition:incrementation}).
\end{itemize}

When a counter increases in length, it may erase a wall by overwriting it. However, this is not a problem, as we shall
see in Facts~\ref{fact:detached} and \ref{fact:attached}.

\begin{fact}\label{fact:detached}
 An initialised wall cannot be erased by a detached time counter.
\end{fact}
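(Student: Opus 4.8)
The plan is to isolate the unique mechanism by which a time counter can overwrite a wall, and then show that this mechanism is never directed against an initialized wall by a detached counter.

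First I would recall from Definition~\ref{definition:incrementation} and the update rule of section~\ref{section:time} that a time counter overwrites a cell \emph{only} when it grows in length, and that it grows only at its most significant end --- the end opposite to the wall it would be attached to, which in space is its left end: when the leftmost digit of a time counter is the carry $2$, the cell immediately to its left is rewritten as a fresh most significant digit. Consequently, if a wall $W$ sitting at cell $p$ is deleted at some time $t$ by a time counter, then at time $t$ that counter occupies cells $p+1,p+2,\dots$ with a carry at cell $p+1$; in particular the cell $p+1$ must carry a non-blank digit of $\alph{time}$ at time $t$. So it suffices to prove that, when $W$ is initialized, $time(F^s(x)_{p+1})=\#$ for every $s$ up to the first moment $W$ could be deleted.

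Next I would argue this invariant. If $W$ is initialized, it was produced at time $1$ from an $\start$ symbol at cell $p$ (section~\ref{section:bootstrapping}); in doing so the three cells to its right were erased and a sweeping counter --- whose $\alph{time}$-component is $\#$ --- was initialized on its right, so $time(F(x)_{p+1})=\#$ at the creation time of $W$. To see this is preserved I would use that no transition of $F$ writes a digit of $\alph{time}$ into a blank cell except as the new most significant digit of a growing counter; hence a non-blank $\alph{time}$-digit can appear at $p+1$ only if $p+1$ becomes the leftmost cell of a time counter that has drifted there. An attached counter does not move; a detached counter only loses its rightmost digit (while propagating an already-present carry leftwards). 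So the only threat is a \emph{detached} counter, lying strictly to the right of $W$ inside its right segment, whose left end is pushed onto $p+1$. Here I would invoke the geometry of the sweeping process: the sweeping counter emitted on the right of the initialized wall $W$ cleans the cells to its right of every non-initialized structure (in particular of non-initialized time counters, detached or not) before it reaches the next wall, so any such drifting detached counter is destroyed before its left end can reach $p+1$. This contradicts the first paragraph, giving the claim (and in fact the stronger statement that an initialized wall cannot be deleted by a time counter at all).

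The hard part will be exactly this last step: ruling out a detached time counter creeping left onto the single cell $p+1$ next to an initialized wall. A carry at the left end of a detached counter \emph{can} occur (a counter may become detached, because the wall bounding it on the right is destroyed, during the one step its leftmost digit is $2$), so the statement is genuinely non-trivial and the argument must lean on the invariant that the cells immediately to the right of an initialized wall stay swept --- i.e., on the design of the $\alph{sweeping}$ layer in section~\ref{section:sweeping}. The remaining verifications --- that no other transition writes an $\alph{time}$-digit adjacent to a wall, and the bookkeeping for the three bootstrapping cells --- are routine from the definitions of the $\alph{time}$ and $\alph{sweeping}$ layers.
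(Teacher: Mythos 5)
Your setup is correct: the only way a time counter can erase a wall is by growing at its left (most significant) end, so the threat to an initialized wall at cell $p$ is a counter lying in the segment to its right whose leftmost digit creeps down to $p+1$ and carries onto $p$. But the step you yourself single out as ``the hard part'' is exactly where the argument fails. You propose to exclude this by an invariant of the sweeping layer (``the cells immediately to the right of an initialized wall stay swept''). The sweeping counter of Section~\ref{section:sweeping} moves away from its wall at speed one and never returns; it destroys non-initialized \emph{walls} through the comparison mechanism, but it does not erase detached time counters, which are simply left to self-destruct by losing their rightmost bit at each step. So the invariant you lean on is neither maintained by the construction nor established in your sketch. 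Moreover, the ``stronger statement'' you claim in passing --- that no time counter at all can delete an initialized wall --- is contradicted by the construction: the unlabeled Fact of Section~\ref{section:merge} states that the left wall of a segment of length $k$ is erased at time $\min(T_k,2^k+k)$, the second alternative being precisely erasure by the attached, initialized time counter of the right wall of that segment. Hence the invariant ``$time(F^s(x)_{p+1})=\#$ for all $s$'' is too strong to be provable.

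The intended argument is much shorter and combines two observations that both appear in your write-up but are never put together. A \emph{detached} counter is never incremented at its right end, so no new carries are created in it; only the carries already present in the word can propagate leftward, and these extend the counter by at most one cell over its entire lifetime. On the other hand, an $\start$ symbol erases the two (indeed three, by Section~\ref{section:bootstrapping}) cells to its right at initialization, so at time $1$ every detached counter in the right segment of an initialized wall has its leftmost cell at distance at least two from that wall. One cell of leftward extension is therefore not enough to reach it. (Counters that become detached only later, after having been attached to a non-initialized wall that is then destroyed, are not covered by this bound and are dealt with separately in Fact~\ref{fact:attached}.)
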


\begin{proof} A detached time counter is not incremented and can extend by one cell at most because of the carries
initially present in the word. But $\start$ symbols erase two cells to their right at initialisation.
\end{proof}

\begin{fact}\label{Time}
 Let $x\in\az$ be the initial configuration. Each attached time counter $u$ in $F^t(x)$ satisfies $val(u)\geq t-1$, the
equality being attained if this counter is attached to an initialised wall.
\end{fact}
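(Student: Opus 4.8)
The plan is to prove both claims at once by induction on $t$, the driving facts being the already-established identity $val(inc(u)) = val(u)+1$ and the structural rules of the construction: after time $1$ no wall is ever created (and walls never move), a wall with no time digit immediately to its left is destroyed at the next step, and a wall produced by an $\start$ symbol carries a time counter of value $0$ at time $1$.

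For the base cases, at $t=0$ the inequality $val(u)\ge -1$ is vacuous and at $t=1$ it says $val(u)\ge 0$, again automatic; the equality at $t=1$ is precisely the initialization rule. For the inductive step I would fix $t\ge 1$, take an attached time counter $u'$ in $F^{t+1}(a)$ bounded on the right by a wall $w$ at position $j$, and note that $w$ is already a wall in $F^t(a)$ (nothing new appears after time $1$ and walls do not move). Since $w$ survives to time $t+1$, the cell at position $j-1$ cannot have carried a blank in the time layer at time $t$ — otherwise $w$ would be destroyed — so it holds a time digit; let $u$ be the maximal $\#$-free word in the time layer ending there. It is a counter attached to $w$, hence $val(u)\ge t-1$ by induction. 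Over one step the cells of $u$ evolve by the incrementation rule, so the positions $j-|inc(u)|,\dots,j-1$ of $F^{t+1}(a)$ carry exactly $inc(u)$, the new leading digit (if the counter grew) being written on the cell just left of $u$, which by maximality of $u$ held either a blank or a wall. Thus $u'$ is a word whose least-significant block is $inc(u)$, possibly prolonged on the left by further digits when that neighbouring wall gets absorbed and $u$ fuses with the counter previously attached to it; since all digits are nonnegative, such a prolongation on the high-order side cannot lower the value, so $val(u')\ge val(inc(u)) = val(u)+1 \ge t = (t+1)-1$.

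For the equality I would run a parallel, tighter induction with the extra clause: if $w$ is an initialized wall in $F^t(a)$ then its attached time counter has value exactly $t-1$. At $t=1$ this is the initialization rule again; for the step, an initialized wall present in $F^{t+1}(a)$ was already an initialized wall in $F^t(a)$, carrying a counter $u$ with $val(u)=t-1$. Inside the segment bounded on the right by $w$, the only cells with a non-blank time layer are those of $u$ — everything else in the segment holds computing data or copied output — so the cell just left of $u$, and its own left neighbour, are blank in the time layer, the construction being arranged so that the time counter never reaches the left end of its segment before the segment is merged. Hence over one step $u$ simply becomes $inc(u)$ with nothing fusing onto it, and the counter attached to $w$ in $F^{t+1}(a)$ has value $val(u)+1 = t = (t+1)-1$.

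The main obstacle is the bookkeeping in the inductive step for the inequality when the incremented counter grows by a cell and overwrites a wall, possibly merging with the counter that wall carried: one must check that the result is still a single attached counter whose value only increased, which is exactly where the ``appending high-order digits cannot decrease the value'' remark and the inductive bound on the absorbed counter are used. The analogous subtlety for the equality is to know that the time counter of an initialized wall stays strictly inside its segment at all times (so it is always flanked on the left by a blank time cell and never fuses with a neighbouring counter), a fact guaranteed by the relation between a segment's length $n$ and its merging time $T_n$ fixed in the construction.
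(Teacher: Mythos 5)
Your proof is correct and rests on the same two driving facts as the paper's own argument: no time counter is ever created after time $1$, and every attached counter is incremented by one at each step. The paper simply traces a given counter back to its origin (either present at $t=0$ with nonnegative value, or created at $t=1$ by a $\start$ symbol with value $0$) instead of inducting forward; your additional bookkeeping for the case where a growing counter absorbs a wall and fuses with another counter is a harmless refinement of a corner case the paper's two-line proof does not address.
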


\begin{proof} No time counter is created except at $t=1$ (by $\start$). Therefore such a counter was present either
in the initial configuration (with a nonnegative value), or was created at $t=1$ by a $\start$ symbol. It is incremented
by one at each step in both cases.
\end{proof}

Thus we can use time counters to tell apart initialised walls from non-initialised walls, which is the object of
the next section.

\subsubsection{Formatting and comparisons}\label{section:formatting}

We want to implement a counter in a new layer which is compared to the time counter when they are in interaction. The formatting layer $\alph{format}$, contains the symbol $\#$ as all layers.  A \define{formatting counter} is a word of maximal length where the value of the cell in $\alph{format}$ is different than $\#$.  Formatting counters are defined and incremented at each step in a similar way as time counters, but they have a range of different behaviours. Thus the other elements of $\alph{format}$ are decomposed into two layers $\alph{value}=\{0,1,2,\#\}$ and $\alph{state}$ where the possible value are:
\begin{description}
 \item[``Go'' state] The counter progresses at speed one to the right.
 \item[``Stop'' state] Once a wall is encountered, the counter progressively (right to left) stops.
 \item[Comparison states] Once the whole counter has stopped, we locally compare the formatting counter and the time
counter, left to right, with a method we describe later which use the symbol $\{-,=_-,=,=_+,+\}$.
\end{description}

The wall is destroyed if the formatting counter is strictly younger, and the formatting counter is destroyed otherwise (see Figures~\ref{figure:comp1} and \ref{figure:comp2}). In the former case, the counter progressively returns to the ``Go''
state. \bigskip

\begin{figure}[!ht]
\begin{center}
 \begin{tikzpicture}
  \draw (0.4,0.4) node {I};

  \foreach \x in {1,...,7}
  {
  \filldraw[fill = black!50!white] (0,0.8*\x) rectangle (0.8,0.8*\x+0.8);
  \draw (0.4,0.8*\x+0.4) node {W};
  }
  \foreach \x/\y/\a in
{1/1/0,7/5/0,2/2/1,4/2/1,6/6/1,4/4/1,5/3/1,7/3/1,3/3/2,7/7/2,6/4/2,5/5/2,4/3/\#,
5/4/\#,6/5/\#,7/4/\#,7/6/\#,0/3/1,0/4/\#,0/5/1,0/6/0,
  1/4/1,1/5/\#,1/6/2,2/5/2,2/6/\#,2/7/1,3/6/0,3/7/\#,3/8/2,4/7/1,4/8/\#,5/8/1}
  {
  \draw[very thick] (0.8*\y,0.8*\x) rectangle (0.8*\y +0.8,0.8*\x +0.8);
  \draw (0.8*\y+0.4,0.8*\x+0.6) node {$Go$};
  \draw[dotted] (0.8*\y,0.8*\x+0.4) -- (0.8*\y+0.8,0.8*\x+0.4);
  \draw (0.8*\y+0.4,0.8*\x+0.2) node {$\a$};
  }
  \draw [very thick] (0,0) rectangle (0.8,0.8);

\foreach \x/\y in {2/1,3/2,4/1,5/2,6/3,7/2}
  {
 \foreach \t in {1,...,\y}
  {
  \draw[dotted](0.8*\t,0.8*\x) rectangle (0.8*\t +0.8,0.8*\x +0.8);
  \draw(0.8*\t+0.4,0.8*\x+0.4) node {\#};
  }
  }

\foreach \x\y in {0/7,1/7,2/8,7/8}
\foreach \t in {\y,...,8}
{\draw[dotted](0.8*\t,0.8*\x) rectangle (0.8*\t +0.8,0.8*\x +0.8);
  \draw(0.8*\t+0.4,0.8*\x+0.4) node {\#};
}

\foreach \x in {1,...,6}
{
 \draw[dotted](0.8*\x+1.6,0.8*\x) rectangle (0.8*\x +2.4,0.8*\x +0.8);
 \draw(0.8*\x+2,0.8*\x+0.4) node {\#};
 \draw[dotted](0.8*\x+0.8,0.8*\x) rectangle (0.8*\x +1.6,0.8*\x +0.8);
 \draw(0.8*\x+1.2,0.8*\x+0.4) node {\#};
}

 \draw[dotted](1.6,0) rectangle (2.4,0.8);
 \draw(2,0.4) node {?};
 \draw[dotted](0.8,0) rectangle (1.6,0.8);
 \draw(1.2,0.4) node {?};
 \draw (8,3) rectangle (8.8,3.4) rectangle (8,3.8);
\draw[->,dotted] (10,3.2) node {\small{\texttt{value}}} (9.5,3.2) -- (9,3.2);
\draw[->,dotted] (10,3.6) node {\small{\texttt{state}}} (9.5,3.6) -- (9,3.6);
 \draw(4.65,3.05) node {{\footnotesize X}};
 \draw(3.85,3.05) node {{\footnotesize X}};

\end{tikzpicture}
\caption{One initialised and one uninitialised formatting counter. X symbols mark the cells where values are prevented to
appear to avoid merging: the right counter is dominated. Only the formatting layer is represented.}

\label{figure:formatting}
 \end{center}
\end{figure}

Changing state takes some time to propagate the information along the counter. Therefore, counters passing from a ``Go''
state to a ``Stop'' state are temporarily in a situation where the left part of the counter progresses whereas
the right part has not. To avoid erasing information, counters in a ``Go'' state have \define{buffers}, i.e. the value
of the counter is only written on half the cells, the other half containing $(Go,\#)$
(see Figure~\ref{figure:formatting}).\bigskip

When its length increase, a counter never merges with another counter, erasing bits from the right-hand
counter instead in order to avoid merging: we say the right-hand counter is \define{dominated}. Notice that it is
impossible for a counter located to the right of another counter to be initialised, and so it is safe to erase bits of
it.

\begin{fact}
 Let $x\in\az$ be the initial configuration. Any non-dominated formatting counter $u$ of $F^t(x)$ satisfies $val(u)\geq
t-1$, the equality being attained if the counter is initialised.
\end{fact}
\begin{proof} Similar to Fact~\ref{Time}.
\end{proof}

Thus we guarantee that an initialised (hence non-dominated) formatting counter is strictly younger than any
uninitialised wall, and symmetrically. Uninitialised formatting counters can only progress to the right 
to be destroyed by the nearest initialised wall. We will see that dominated counters, whose value is arbitrary, are not a problem since
they are erased before any comparison takes place.

\begin{definition}[Comparison method]
 Let $u=u_0u_1\dots $ and $v = v_0v_1\dots$ be two counters in redundant binary basis (adding zeroes so that $|u| =
|v|$). Let us note $sign(u-v)$ the result of the comparison between $u$ and $v$, that is, $+, 0$ or $-$.
\begin{description}
 \item[Case 1] if $|u| = |v| = 1$, $sign(u-v) = sign(u_0-v_0)$;
 \item[Case 2] if $u_0+\lfloor u_1/2\rfloor > v_0+\lfloor v_1/2\rfloor+1$, then $sign(u-v) = +$,\\ and symmetrically;
 \item[Case 3] if $u_0+\lfloor u_1/2\rfloor = v_0+\lfloor v_1/2\rfloor+\varepsilon$ (for
some $\varepsilon\in\{-1,0,1\}$), then $sign(u-v) = sign((u_1'+2\varepsilon) u_2\dots - v_1'v_2\dots)$,\\
where $u_1' = u_1 \mod 2$ and $v_1'=v_1 \mod 2$.
\end{description}
\end{definition}
In other words, we do a bit-by-bit comparison starting from the most significant bit, considering that $\#$ is equal to $0$, and taking into
account the carry propagation ``in advance'', so that the incrementation and carry propagation can continue during
the comparison. When the ``local difference'' $\varepsilon$ is too small, the result cannot be
determined locally and a remainder is carried (consider a comparison between $120\cdots0$ and $11\cdots12$).

Formally, for each pair of bits $(u_n,v_n)$, we add 1 to each bit if the following bit of the
corresponding counter is $2$, and depending on the value of $u_n-v_n+2\varepsilon$:
\begin{center}
\begin{tabular}{|c|c|c|c|c|c|}
\hline result&$<-1$\quad&\quad$-1$\quad\quad&\quad$0$\quad\quad&\quad$+1$\quad\quad&$>+1$\quad\\
\hline \quad new state \quad&$-$&$=_-$&$=$&$=_+$&$+$\\\hline
\end{tabular}
\end{center}

If the result can be determined locally (cases 1 and 2), the state is changed to $+$ or $-$, and the
result propagates to the right without further comparisons. Otherwise (case 3), the state changes to $=$,
which means future bit comparisons will decide the result in the same way. If there is a remainder $\varepsilon$, it is
remembered for the next comparison by having three states $=_-, =_+, =$. See Figure \ref{figure:comp2} for an
example.\bigskip

\begin{figure}[!ht]
\begin{center}
 \begin{tikzpicture}[scale=1.1]
  \foreach \x/\y/\z in {0/2/6,1/1/5,2/1/4,3/1/3,4/1/3,5/1/3,6/1/3,7/1/3,8/0/3}
  \foreach \t in {\y,...,\z}
  {
  \draw[very thick] (-0.8*\t,0.8*\x) rectangle (-0.8*\t +0.8,0.8*\x +0.8);
  }

\foreach \x/\y in {0/7,1/6,2/5,3/4,4/4,5/4,6/4,7/4,8/4}
  \foreach \t in {\y,...,7}
  {
  \draw[dotted] (-0.8*\t,0.8*\x) rectangle (-0.8*\t +0.8,0.8*\x +0.8);
  \draw (-0.8*\t+0.4,0.8*\x+0.4) node {\#};
  }
\draw[dotted] (-0.8,0) rectangle (0,0.8);
\draw (-0.4,0.4) node {\#};
\foreach \x in {0,...,7}
{
  \filldraw[fill = black!50!white] (0,0.8*\x) rectangle (0.8,0.8*\x+0.8);
  \draw (0.4,0.8*\x+0.4) node {W};
}
\foreach \x\y in {0/2,0/4,0/6,1/3,1/5,2/4,7/1,8/0,8/2}
  {
  \draw (-0.8*\y+0.4,0.8*\x+0.4) node {Go};
  }
  \foreach \x\y in {1/1,2/1,2/2,3/1,3/2,3/3,4/1,4/2,5/1}
  {
  \draw (-0.8*\y+0.4,0.8*\x+0.4) node {Stop};
  }
  \foreach \x\y in {0/3,0/5,1/2,1/4,2/3,8/1}
  {
  \draw (-0.8*\y+0.4,0.8*\x+0.4) node {\textcolor{gray}{Go}};
  }
  \foreach \x\y in {4/3,5/3,6/3,7/3,8/3}
  {
  \draw (-0.8*\y+0.4,0.8*\x+0.4) node {$=$};
  }
  \foreach \x\y in {5/2,6/2,6/1,7/2}
  {
  \draw (-0.8*\y+0.4,0.8*\x+0.4) node {$-$};
  }
  \draw[red,very thick] (-0.8,0.8) -- (-0.8,1.6) -- (-1.6,1.6) -- (-1.6,2.4) --
(-2.4,2.4)
		    (-2.4,3.2) -- (-1.6,3.2) -- (-1.6,4) -- (-0.8,4) --
(-0.8,4.8) -- (0,4.8)
                   (0,5.6) -- (-0.8,5.6) -- (-0.8,6.4) -- (-1.6,6.4) --
(-1.6,7.2);

 \end{tikzpicture}
  \caption{A younger formatting counter encountering an older wall, which is destroyed. Only the state layer of
$\alph{format}$ is represented, with greyed words for buffers.}
\label{figure:comp1}
 \end{center}
\end{figure}

\begin{figure}[!ht]
\begin{center}
 \begin{tikzpicture}
\foreach \x in {1,...,7}
{
  \filldraw[fill = black!50!white] (-1,\x-1) rectangle (0,\x);
  \draw (-0.5,\x-0.5) node {W};
}
\foreach \y/\x/\a/\b/\c in {
			    0/1/Stop/2/1, 0/2/Stop/2/1, 0/3/Stop/1/1,0/4/Stop/1/1,
			    1/1/Stop/1/2, 1/2/Stop/1/1, 1/3/Stop/2/1,1/4/=/1/1,
			    2/1/Stop/2/1, 2/2/Stop/1/2, 2/3/=_+/0/1, 2/4/=/2/1,
			    3/1/Stop/1/2, 3/2/+/2/0, 3/3/=_+/0/2, 3/4/=/0/1,3/5/=/1/\#,
			    4/1/+/2/1, 4/2/+/0/1, 4/3/=_+/1/0, 4/4/=/0/2,4/5/=/1/\#,
			    5/2/+/1/1, 5/3/=_+/1/0, 5/4/=/0/0, 5/5/=/1/1,
			    6/3/=_+/1/0, 6/4/=/0/0, 6/5/=/1/1
			    }
{
\draw[very thick] (-\x,\y) rectangle (-\x-1,\y+1);
\draw[dotted] (-\x,\y+0.5) -- (-\x-1,\y+0.5) (-\x-0.5,\y)--(-\x-0.5,\y+0.5);
\draw (-\x-0.5,\y+0.75) node {$\a$};
\draw (-\x-0.25,\y+0.25) node {$\c$};
\draw (-\x-0.75,\y+0.25) node {$\b$};
 }
\foreach \x/\y in {0/5,1/5,2/5,3/6,4/6,5/6,6/6}
\foreach \t in {6,...,\y}
{
\draw[dotted] (-\t,\x) rectangle (-\t-1,\x+1);
\draw (-\t-0.5,\x+0.5) node {\#};
}

\draw[dotted] (-2,5) rectangle (-1,6);
\draw (-1.5,5.5) node {\#};
\draw[dotted] (-3,6) rectangle (-1,7);
\draw (-1.5,6.5) node {\#};
\draw[dotted] (-3,6) rectangle (-2,7);
\draw (-2.5,6.5) node {\#};
\draw[red,very thick] (-5,1) -- (-4,1) -- (-4,2) -- (-3,2) -- (-3,3) -- (-2,3)
-- (-2,4) -- (-1,4);

% Partie droite
\draw[very thick] (2,3) rectangle (3,4);
\draw[dotted] (2,3.5) -- (3,3.5) (2.5,3.5)--(2.5,3);
\draw (2.5,4.3) node {\texttt{state}};
\draw (1.9,2.7) node {\texttt{value}};
\draw (3.1,2.7) node {\texttt{time}};
 \end{tikzpicture}
\end{center}
\caption{The comparison process in detail. Here the formatting counter is older than the time counter and is destroyed.
Only the
layers $\alph{time}$ and $\alph{format}$ are represented.}
\label{figure:comp2}
\end{figure}

After the comparison, two cases are possible:
\begin{itemize}
 \item if the state of the rightmost bit is $-$ or $=_-$, the wall is strictly older than the counter. The wall is
destroyed and
the state of the rightmost bit becomes ``Go''. The counter then progressively returns to the ``Go'' state.
 \item if the state of the rightmost bit is $+$, $=_+$ or $=$, the wall is younger than the counter. The rightmost bit
is erased, and the rest of the counter is progressively erased in a similar way as a detached time counter.
\end{itemize}
The second case covers the case where both the counter and the wall are initialised (result $=$), which means that the
formatting counter has finished formatting its segment and may be erased. Also, if the counter is dominated,
then its leftmost bit is erased at each step, preventing the comparison to start, until the counter is
entirely erased.\bigskip

To sum up, $\alph{format}=\{\#\} \cup \left(\{Go\}\times \{0,1,2,\#\}\right) \cup \left(\{Stop, +, -, =, =_+, =_-\} \times
\{0,1,2\}\right)$.\bigskip

When a formatting counter reaches the right wall of the segment, the segment is said to be $\define{formatted}$. This implies
that the segment contains no more uninitialised walls.

\begin{fact}\label{fact:formattime}
 At time $k(1+\lceil\log k\rceil)$, all segments of length $k$ (for $k>3$) are formatted.
\end{fact}

\begin{proof} As long as $t\leq k(1+\lceil\log k\rceil)$, any initialised formatting counter has length
$\lceil\log t\rceil\leq 2\lceil\log k\rceil$ (excluding the buffers) since it is in base 2. The
counter
progresses at speed one except when it meets another wall. Each comparison takes a time equal to twice the current
length of the counter (again excluding the buffers). Furthermore, two consecutive walls are separated by three cells at
least (cf. Section~\ref{section:bootstrapping}). Thus, the segment is formatted in less than $k
+ \frac k4\cdot 2\cdot 2\lceil\log k\rceil$ steps, which is coherent with our first assumption.
\end{proof}

\begin{fact}\label{fact:attached}
 An initialised wall cannot be erased by a time counter attached to a uninitialised wall.
\end{fact}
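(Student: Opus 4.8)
The plan is to race two opposite speeds against each other: a time counter attached to a non-initialized wall can only reach length $\ell$ after a number of steps exponential in $\ell$, whereas the sweeping counter emitted by the offending initialized wall destroys that non-initialized wall within a polynomial number of steps. We argue by contradiction: suppose a time counter $u$ stays attached to a non-initialized wall $W_N$, at position $q$, from time $1$ until some time $t$, and that at time $t$ the lengthening of $u$ erases an initialized wall $W_I$ at position $p<q$; among all such events pick one with $d:=q-p$ minimal. Then no initialized wall lies strictly between $W_I$ and $W_N$, for $u$ would reach and erase it before reaching $W_I$, giving another such event of smaller distance; hence $W_N$ lies in the segment whose left boundary is $W_I$, and $d$ is less than that segment's length. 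Moreover, when the $\start$ symbol at $p$ turned into $W_I$ at time $1$ it erased cells $p+1,p+2,p+3$, so at time $1$ the left end of $u$ is at position $\geq p+4$ --- and if nothing of $u$ survives, $W_N$ has no time counter to its left and is destroyed at once, so $u$ cannot later erase anything. Thus $d\geq 5$ and $|u|\leq d-4$ at time $1$.

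For the slow-growth estimate: a word of length $d-4$ over $\{0,1,2\}$ has value $<2^{d-3}$, so $val(u)<2^{d-3}$ at time $1$; since $u$ stays attached to $W_N$ its value grows by exactly $1$ per step ($val(inc(u))=val(u)+1$), hence $val(u)<2^{d-3}+\tau$ at every time $\tau$. A counter of length $L$ has value $\geq 2^{L-1}$, so at any time $\tau<2^{d-3}$ the length of $u$ is $<d-1$. Consequently, if $W_N$ is destroyed before time $2^{d-3}$, then since $u$ is attached to $W_N$ at time $t$ we must have $t<2^{d-3}$, whence $|u|<d-1$ at time $t$ and $u$ cannot possibly reach the cell of $W_I$, at distance $d$: contradiction. (An erasure of $W_I$ by $u$ \emph{after} $W_N$ has been destroyed would be by a detached counter and is excluded by Fact~\ref{fact:detached}.)

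It therefore suffices to destroy $W_N$ before time $2^{d-3}$. The sweeping counter emitted by $W_I$ at time $1$ advances rightwards at speed one, pausing only to compare itself against each wall it meets; being initialized it is, by Fact~\ref{Time} and the discussion following it, strictly younger than every non-initialized wall between $W_I$ and $W_N$ (their counters were present already at time $0$), so it destroys each of them and continues, and since no initialized wall lies in between it reaches and destroys $W_N$ itself. Counting exactly as in the proof of Fact~\ref{fact:sweeptime} --- at most $d$ intervening walls, each comparison costing twice the sweeping counter's current length, which is $O(\log(\text{current time}))$ --- the wall $W_N$ is destroyed within $O(d\log d)$ steps. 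For every $d$ beyond a universal constant $d_0$ this is $<2^{d-3}$, which finishes the contradiction; the finitely many values $5\leq d<d_0$ are settled by running the local rule directly.

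The delicate point is precisely this last one: for the smallest distances the exponential gap between $2^{d-3}$ and $O(d\log d)$ has not yet opened, so one must verify by hand that the sweeping counter reaches and erases $W_N$ before $u$ can stretch to length $d-1$. One also checks that, while growing toward $W_I$, the counter $u$ never merges with a time counter further to the left, since any intervening non-initialized wall --- hence its time counter --- is destroyed by the sweeping counter long before the slowly growing $u$ arrives; and the remaining degeneracy, a non-initialized wall whose time counter has been reduced to value $0$ so that it behaves as a legitimate initialized wall, needs only the routine observation that it is then absorbed by the merging schedule $(T_n)$ well before its counter could reach length $d$.
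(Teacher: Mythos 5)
Your proposal is correct and follows essentially the same argument as the paper: bound the value of the counter attached to the non-initialized wall at time $1$ (using the three cells erased by $\start$), observe that reaching the initialized wall then requires exponentially many increments, invoke Fact~\ref{fact:sweeptime} to destroy the non-initialized wall in $O(k\log k)$ steps, and treat the smallest separations separately. The minimal-counterexample framing and the explicit remarks about detached counters and intermediate walls are elaborations of steps the paper handles more tersely, not a different route.
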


\begin{proof} Consider two walls, the left being initialised and the right
uninitialised. As explained in Section~\ref{section:bootstrapping}, we can assume they are separated by $k>3$
cells. The value of the time counter attached to the right wall cannot exceed $2^{k-3}$ at time $1$ (since every symbol
$\start$ erases three cells to its right at time $1$), so it takes more than $2^k-2^{k-3}$ steps before the left wall is erased.
According to Fact~\ref{fact:formattime}, the right wall is destroyed in less than $k(1+\lceil\log k\rceil)$ steps,
and from then its time counter takes at most $k$ more steps to be erased.

For $k\geq 5$, $k(1+\log k)+k\leq 2^k - 2^{k-3}$, so the counter is erased before it reaches the left wall. For $k=4$,
any wall between them is destroyed at time 1, so the destruction time is actually less than $k + 2\log k + k \leq
2^k-2^{k-3}$. 
\end{proof}

\subsection{Computation and copy}
\subsubsection{Simulating a Turing machine in a cellular automaton}
Let $\mathcal{TM}=(Q,\Gamma,\#,q_{0},\delta,Q_{F})$ be a Turing machine. We simulate this machine in a
cellular automaton $F$ on the alphabet $(\Gamma\cup\#)\times(Q\cup\#)$. The left part contains the content of the tape; the
right part contains the state of the machine for the cell where the head is located, and $\#$ everywhere else.

The local rule of $F$ is governed by the rules of the machine. That is, for all $u\in((\Gamma\cup\#)\times(Q\cup\#))^\Z$,
and writing $\_$ to denote an arbitrary value:
\begin{itemize}\itemsep0em
 \item if the head is on $u_0$ and $\delta(u_0) = (q,\gamma,\_)$, then $F(u)_0 = (\gamma,\#)$;
 \item if the head is on $u_1$, $\delta(u_1) = (q,\_,\leftarrow)$ and $u_0 = (\gamma',\#)$, then $F(u)_0 = (\gamma',q)$;
 \item similarly if the head is on $u_{-1}$ and $\delta(u_{-1}) = (q,\_,\rightarrow)$;
 \item otherwise, $F(u)_0 = u_0$.
\end{itemize}
When starting from a configuration filled with $(\#,\#)$ everywhere except for a finite window with only one head, the time
evolution of the cellular automaton matches the time evolution the Turing machine. The Turing machine considered in the proof does not stop, but by consistency we can assume that when the machine has stopped (the state being in $Q_F$), the local rule is the identity function.

\subsubsection{Computation}\label{section:computation}
Computation takes place to the left of each initialised wall. $\alph{comp}$ is divided
into three layers, on which three Turing machines are simulated, using the alphabet \[\alph{comp} =\bigotimes_{i=1}^3
(\Gamma_i\cup\#)\times(Q_i\cup\#).\] We adapt the simulation so that these Turing machines can read input from or write 
output to another layer (when indicated).

We now describe the operations to be performed symchronously between times $T_{n-1}$ and $T_n$ that we will fix later.
Assume that, at time $T_{n-1}$, $n$ is already written on the layer 1 and $T_{n-1}$ on layer 3. The machines:
\begin{enumerate}\itemsep0em
 \item replace $n$ by $n+1$ on layer 1 and stops;
 \item compute $w_n$ on layer 2, outputting it on the main layer, and stops;
 \item compute $T_n$ on layer 3, and stops;
\end{enumerate}
When $t = T_n$ ($t$ being read from the time layer), the copying process triggers and the next computation starts, 
except when merging occurs; see next subsections.

All these operations must be performed in less than $T_n - T_{n-1}$ steps. We now fix the value of $T_n$ so that it is indeed possible.\bigskip

A Turing machine with tape alphabet $\Gamma$ and set of states $Q$ and using only a computational space $S$ stops in time 
$S\cdot|\Gamma|^{S}\cdot |Q|$ which is the number of possible configurations. Otherwise, the same configuration would be reached twice, entering a loop.

Therefore there exists a constant $q>0$ large enough that the operations on layers 1 and 2 can be performed in space $\lfloor\sqrt n\rfloor\log_2q$
and time $O(q^{\lfloor\sqrt n\rfloor})$. Furthermore, the function $(r,n) \longmapsto r^{\lfloor\sqrt n\rfloor}$ is computable in space $\lfloor\sqrt n\rfloor\log_2r$ 
(length of the output) and time $O(n^{3/2}(\log r)^2)$ (compute $\lfloor\sqrt n\rfloor$ in time $O(n)$, then perform $\lfloor\sqrt n\rfloor$ multiplications
between numbers of length $\lfloor\sqrt n\rfloor\log_2 r$ at most in time $O((\sqrt n\log_2r)^2)$). \bigskip

In other words, if we fix \[T_n-T_{n-1} = q^{\lfloor\sqrt n\rfloor},\]
then the operation on layer 3 can be performed in space $\lfloor\sqrt n\rfloor\log_2q$ and time $O(q^{\lfloor\sqrt n\rfloor})$.
However, we need an upper bound on the time at each step and not only an asymptotic bound. This is solved by 
the linear speedup theorem for Turing machines: we can divide the computational time by any fixed constant $C$ by replacing each machine $M_i$
by a new machine $M'_i$, such that $M'_i$ performs $C$ computational steps of $M_i$ at each step, increasing the radius as necessary.

\begin{remark}
We fixed $T_n$ so that the computation space is of size $\sqrt{n}$ at time $T_n$
and constitutes an asymptotically negligible fraction of its segment. We could choose instead of $\sqrt n$ any other function in $o(n)$ which is time constructible.
\end{remark}

Similarly to time counters, whenever they find an empty computational layer to their right (instead of a wall or another computation state), computation states was replaced by the symbol $(\#,\#)$. Thus uninitialised computation states self-destruct progressively. This requires that the Turing machines are adapted so that they never write $(\#,\#)$ in a cell in the middle of a computation.
\subsubsection{Copying}\label{section:copy}

On the layer $\alph{copy}$, the cellular automaton copies periodically the words produced by the Turing machine in view to make samplings of the limit measures, we just put $\alph{copy} = \B\cup\{\#\}$.

At time $T_n\ (n\geq 0)$, $w_n$ has been output on the main layer, followed by a symbol $\#$. 
If the segment is not in the process of merging, repeated copies of $w_n$ have to be written over the main layer. 
The Turing machine triggers the copying process by copying the rightmost letter of $w_n$ from the
main layer to the copy layer.

\begin{description}
\itemsep0em
 \item[First phase] As long as it has not met a symbol $\#$, the word on the copy layer progresses at speed -2 (that is to say if it is in the position $[i,j]$ it moves to the position $[i-2,j-2]$) and at each step a
letter is copied from the main layer to the tail of the word;
 \item[Second phase] The word keeps progressing at speed -2 but the head loses
one letter at each step and copies it on the main layer. The tail keeps copying letters from the main layer.
\end{description}
 
Intuitively, the cellular automaton performs a caterpillar-like movement between the copy and main layers (see
Figure~\ref{figure:copy} for an example). The process ends when it meets a wall. 

\begin{figure}[!ht]
\begin{center}
 \begin{tikzpicture}
\foreach \x in {1,...,7}
{
  \filldraw[fill = black!50!white] (0,\x) rectangle (0.8,\x+0.8);
  \draw (0.4,\x+0.4) node {W};
}
\foreach \x in {1,...,7}
{
 \foreach \y in {0,...,3}
  {
   \draw (-0.8*\y,\x) rectangle (-0.8*\y-0.8,\x+0.4);
  }

    \draw (-0.4,\x+0.2) node {$1$};
    \draw (-1.2,\x+0.2) node {$0$};
    \draw (-2,\x+0.2) node {$1$};
    \draw (-2.8,\x+0.2) node {$1$};
}
\foreach \x/\y in
{1/1,2/3,3/3,3/5,4/5,4/4,3.6/6,4.6/6,5/6,5/7,6/7,6/8,5.6/6,5.6/8,6.6/6,6.6/8,
6.6/9,7/9}
{
   \draw (-0.8*\y,\x+0.4) rectangle (-0.8*\y+0.8,\x+0.8);
   \draw (-0.8*\y+0.4,\x+0.6) node {$1$};
}
\foreach \x/\y in {2/2,3/4,4/6,4.6/7,5.6/7,6.6/7,7/8}
{
   \draw (-0.8*\y,\x+0.4) rectangle (-0.8*\y+0.8,\x+0.8);
   \draw (-0.8*\y+0.4,\x+0.6) node {$0$};
}
%\draw[red,very thick] (-4,0.8) -- (-4,8);
\foreach \x/\y in
{1/4,1.4/1,1.4/2,1.4/3,1.4/4,1.4/5,1.4/6,1.4/7,1.4/8,2/4,2.4/0,2.4/3,2.4/4,2.4/5
,2.4/6,2.4/7,2.4/8,3/4,3.4/0,3.4/1,3.4/5,3.4/6,3.4/7,3.4/8,  
4/4,4.4/0,4.4/1,4.4/2,4.4/6,4.4/7,4.4/8,5/4,5.4/0,5.4/1,5.4/2,5.4/3,5.4/4,5.4/7,
5.4/8,6/4,6.4/0,6.4/1,6.4/2,6.4/3,6.4/4,6.4/5,6.4/8,7/4,7.4/0,7.4/1,7.4/2,
   7.4/3,7.4/4,7.4/5,7.4/6}
{
   \draw[dotted] (-0.8*\y,\x) rectangle (-0.8*\y-0.8,\x+0.4);
   \draw (-0.8*\y-0.4,\x+0.2) node {\#};
}
\foreach \x/\y in
{1/5,1/6,1/7,1/8,2/5,2/6,2/7,2/8,3/5,3/6,3/7,3/8,4/6,4/7,4/8,5/7,5/8,6/8}
{
   \draw[dotted] (-0.8*\y,\x) rectangle (-0.8*\y-0.8,\x+0.4);
   \draw (-0.8*\y-0.4,\x+0.2) node {?};
}
\draw (2.1,3.4) rectangle (2.9,3.8) rectangle (2.1,4.2);
\draw (3.5,3.6) node {\small{\texttt{main}}} ;
\draw (3.5,4) node {\small{\texttt{copy}}} ;

\draw[->] (-8,0.5) -- (-8,8);
\draw (-8.2,8.4) node {time};
\draw (-8.1,1) -- (-7.9,1);
\draw (-8.4,1) node {$T_n$};
\end{tikzpicture}
\end{center}
\caption{Beginning of the copying process, with $w_n = 1101$. Only the layers
$\alph{copy}$ and $\alph{main}$ are represented.}
%The thick line is the leftmost limit of the time counter.}
\label{figure:copy}
\end{figure}

Uninitialised copying processes may write arbitrary words on the main layer, but they progress to the left at speed one
and are destroyed by the nearest wall in this direction.

\subsection{Merging}\label{section:merge}

At time $T_n$, all segments of length $n$ are forced to merge with their left neighbour, so that the density of walls tends to 0.
This means that merging is performed at time $T_n$ between a segment larger than $n$ to the left, and any number
of consecutive segments of length $n$ to the right.
To determine the length of each segment, a signal is sent to the right and bounces off the right wall, and its return
time is measured.\bigskip

To do so, a \define{merging counter} of value $2n$ is initialised at time $T_{n-1}$ on the merge layer. The value of $n$
is copied from the first computing layer to the merge layer (with an additional 0 at the end), using an auxiliary state
$\copie$ (\define{copy}). This counter decrements at each step in a similar way as incrementing counters, except it uses
-1 as a negative carry. See Figure~\ref{figure:merge} for an example of this process.\bigskip

If the signal returns at or before the end of the decrementation, a symbol $\fusion$ (\define{merge}) is created on the merge
layer to indicate that the wall is to be destroyed at the next $T_n$; this is the only case where the copying process described above does not trigger. 
To sum up, \[\alph{merge} =\{-1,0,1,\fusion,\copie\}\times\{\rightarrow,\leftarrow\}\cup\{\#\}.\]

 \begin{figure}[!ht]
\begin{center}
\begin{tikzpicture}
\draw (-4.4,-0.4) -- (-4.4,5.6);
\draw[dotted] (-4.4,5.6) -- (-4.4,6.4);
\draw[->] (-4.4,6.4) -- (-4.4,8);
\draw (-4.7,8.2) node {time};
\draw (-4.5,0) -- (-4.3,0);
\draw (-4.75,0) node {$T_2$};
\draw (-4.5,7.2) -- (-4.3,7.2);
\draw (-4.75,7.2) node {$T_3$};
\foreach \x in {0,...,6,8}
{
  \filldraw[fill = black!50!white] (0,0.8*\x) rectangle (0.8,0.8*\x+0.8)
(3.2,0.8*\x) rectangle (4,0.8*\x+0.8);
  \draw (0.4,0.8*\x+0.4) node {W};
  \draw (3.6,0.8*\x+0.4) node {W};
}
  \filldraw[fill = black!50!white] (3.2,7.2) rectangle (4,8);
  \draw (3.6,7.6) node {W};

\foreach \x\y in {0/1,1/2,2/3}
 {
   \draw[very thick] (\y*0.8,\x*0.8) rectangle (\y*0.8+0.8,\x*0.8+0.8);
   \draw (\y*0.8+0.4,\x*0.8+0.4) node {$\rightarrow$};
   \draw[very thick] (\y*0.8,4-\x*0.8) rectangle (\y*0.8+0.8,4.8-\x*0.8);
   \draw (\y*0.8+0.4,4-\x*0.8+0.4) node {$\leftarrow$};
 }

\foreach \x\y in
{0/2,0/3,0/-3,0/-4,0/-5,1/1,1/3,1/-4,1/-5,2/1,2/2,2/-4,2/-5,3/1,3/2,3/-4,3/-5,
4/1,4/3,4/-4,4/-5,5/2,5/3,5/-3,5/-4,5/-5,
6/1,6/2,6/3,6/-2,6/-3,6/-4,6/-5,8/-2,8/-3,8/-4,8/-5,8/1,8/2,8/3,9/0,9/-1,9/-2,
9/-3,9/-4,9/-5,9/1,9/2,9/3}
{
  \draw[dotted] (\y*0.8,\x*0.8) rectangle (\y*0.8+0.8,\x*0.8+0.8);
  \draw (\y*0.8+0.4,\x*0.8+0.4) node {\#};
}

\foreach \x\y in {0/2,1/3}
{
   \draw[very thick] (-\y*0.8,\x*0.8) rectangle (-\y*0.8+0.8,\x*0.8+0.8);
   \draw (-\y*0.8+0.4,\x*0.8+0.4) node {C};
}
\foreach \x\y in {1/2,2/3,3/3,4/3,5/2}
{
   \draw[very thick] (-\y*0.8,\x*0.8) rectangle (-\y*0.8+0.8,\x*0.8+0.8);
   \draw (-\y*0.8+0.4,\x*0.8+0.4) node {$1$};
}
\foreach \x\y in {0/1,2/1,2/2,3/2,4/1}
{
   \draw[very thick] (-\y*0.8,\x*0.8) rectangle (-\y*0.8+0.8,\x*0.8+0.8);
   \draw (-\y*0.8+0.4,\x*0.8+0.4) node {$0$};
}
\foreach \x\y in {1/1,3/1,4/2,5/1}
{
   \draw[very thick] (-\y*0.8,\x*0.8) rectangle (-\y*0.8+0.8,\x*0.8+0.8);
   \draw (-\y*0.8+0.4,\x*0.8+0.4) node {$-1$};
}
\draw[very thick] (-0.8,4.8) rectangle (0,5.6);
\draw (-0.4,5.2) node {$M$};
\draw[very thick] (-0.8,6.4) rectangle (0,7.2);
\draw (-0.4,6.8) node {$M$};
\draw[dotted] (-0.4,5.8) -- (-0.4,6.2) (0.4,5.8) -- (0.4,6.2) (3.6,5.8) --
(3.6,6.2);
\end{tikzpicture}
\end{center}
\caption{Determination of the length of a segment. Here the right segment is of length $3$ and the two segments merge
at time $T_3$. Only the merging layer is represented, with the counter of the right segment omitted for clarity.}
\label{figure:merge}
\end{figure}

\begin{fact}\label{fact:erasingtime}
 Left walls of segments of length $\ell$ are erased at time $T'_\ell = \min (T_\ell, 2^\ell+\ell)$.
\end{fact}

\begin{proof} Except for the situation described above, the only other way for an initialised wall to be erased is a
time counter attached to an initialised wall, see Facts~\ref{fact:detached} and \ref{fact:attached}. A redundant binary
counter whose initial value is 0 reaches length $\ell$ at time $2^\ell+\ell$, the second term come from the carry propagation.
\end{proof}

Uninitialised merging counters are destroyed in exactly the same way as uninitialised time counters. To prevent uninitialised merging signals 
from disturbing a merging process, any right merging signal $\rightarrow$ erase incoming left merging signals $\leftarrow$. Merging signals 
arriving to a wall outside of a merging process is simply ignored and destroyed.

\subsection{Correctness of the construction}\label{section:correction}

To sum up, we have two time sequences $(T_n)_{n\in\N}$ and $(T'_n)_{n\in\N}$ such that:
\begin{itemize}
 \itemsep0em
 \item At time $T_n$, the computation of $w_n$ is finished and the copy starts;
 \item At time $T'_n$, the segments of length $n$ merge with their left neighbour.
\end{itemize}
Furthermore, those sequences are equal for $n$ large enough.

The computation, copy and merging processes described in the previous section have to be performed between time $T_n$ and time $T_{n+1}$, which
requires that the segments are not too large. In this section, we control the length of segments at
time $T_n$.
\begin{proposition}\label{Equiv-tn}
 $T_n = \Theta(\lfloor\sqrt n\rfloor q^{\lfloor\sqrt n\rfloor})$ where $q$ is defined in
Section~\ref{section:computation}.
\end{proposition}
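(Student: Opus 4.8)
The plan is to unwind the definition of the sequence $(T_n)$ given in Section~\ref{section:computation} into an explicit sum and then bound that sum. Recall that there is a fixed threshold $N\in\N$ such that $T_n-T_{n-1}=q^{\lfloor\sqrt n\rfloor}$ for every $n>N$, while $T_0$ is a fixed integer and $T_n-T_{n-1}$ equals a fixed constant $t_N$ for $1\le n\le N$. Summing, one gets $T_n=c+\sum_{k=1}^{n}q^{\lfloor\sqrt k\rfloor}$ for a constant $c$ depending only on $N$, so up to an additive constant $T_n$ equals $S_n:=\sum_{k=1}^{n}q^{\lfloor\sqrt k\rfloor}$; since $S_n\to\infty$, it is enough to prove $S_n=\Theta\bigl(\lfloor\sqrt n\rfloor\, q^{\lfloor\sqrt n\rfloor}\bigr)$.

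Next I would group the terms of $S_n$ by the value $m=\lfloor\sqrt k\rfloor$. For each $m\ge 1$ there are exactly $2m+1$ integers $k$ with $\lfloor\sqrt k\rfloor=m$, namely those in $[m^2,(m+1)^2-1]$; hence, writing $M=\lfloor\sqrt n\rfloor$ and allowing the top group $m=M$ to be truncated, one has $\sum_{m=1}^{M-1}(2m+1)q^m\le S_n\le\sum_{m=1}^{M}(2m+1)q^m$. It then remains to check that $\Sigma_M:=\sum_{m=1}^{M}(2m+1)q^m=\Theta(M q^M)$. The lower bound $\Sigma_M\ge(2M+1)q^M\ge Mq^M$ is immediate from the last term. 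For the upper bound, bound $2m+1$ by $2M+1$ and sum the geometric series: $\Sigma_M\le(2M+1)\sum_{m=1}^{M}q^m\le(2M+1)\frac{q}{q-1}q^M=O(Mq^M)$, where $q\ge 5>1$ is what makes this work. Since $q$ is a fixed constant, $\sum_{m=1}^{M-1}(2m+1)q^m=\Sigma_{M-1}=\Theta\bigl((M-1)q^{M-1}\bigr)=\Theta(Mq^M)$ as well, so squeezing $S_n$ between the two bounds gives $S_n=\Theta(Mq^M)=\Theta\bigl(\lfloor\sqrt n\rfloor\, q^{\lfloor\sqrt n\rfloor}\bigr)$, and therefore $T_n=\Theta\bigl(\lfloor\sqrt n\rfloor\, q^{\lfloor\sqrt n\rfloor}\bigr)$.

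There is no genuine obstacle here: the claim is a routine asymptotic estimate of a sum with a geometric factor, the content being simply that such a sum is dominated up to a constant by its last block of $2M+1=\Theta(M)$ equal terms $q^M$. The only points requiring a little care are the bookkeeping — isolating the constant-difference regime $n\le N$ (and the value of $T_0$) so it does not affect the $\Theta$, and counting the $2m+1$ values of $k$ in each block while permitting the final block to be incomplete — and making the geometric bound explicit so that the constants are clearly independent of $n$.
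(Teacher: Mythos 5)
Your proof is correct and follows essentially the same route as the paper: write $T_n$ as a telescoping sum of the increments $q^{\lfloor\sqrt k\rfloor}$, group the terms by the value $m=\lfloor\sqrt k\rfloor$ (each block contributing $2m+1$ equal terms), sandwich the sum between $\sum_{m=1}^{M-1}(2m+1)q^m$ and $\sum_{m=1}^{M}(2m+1)q^m$ with $M=\lfloor\sqrt n\rfloor$, and bound these by a geometric estimate. Your treatment of the initial regime $n\le N$ is slightly more explicit than the paper's, but the argument is the same.
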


\begin{proof}
$T_n = \sum_{k=1}^n T_k-T_{k-1}$. Since $T_{k+1} - T_k = q^{\lfloor\sqrt k\rfloor}$, and:
\[(2\lfloor\sqrt{n}\rfloor-1)q^{\lfloor\sqrt{n}\rfloor-1}\leq \sum_{k=1}^{\lfloor\sqrt n\rfloor-1} (2k+1)q^k \leq
\sum_{k=1}^n q^{\lfloor\sqrt k\rfloor}\leq \sum_{k=1}^{\lfloor\sqrt n\rfloor} (2k+1)q^k \leq (2\lfloor\sqrt
n\rfloor+1)q^{\lfloor\sqrt n\rfloor+1},\]
the proposition follows.
\end{proof}

\subsubsection{Acceptable segments}

\begin{definition}
Denote:
\begin{align*}\Gamma^t_{[i,j]} =& \left\{x\in\az\ |\ [i,j]\textrm{ is a segment of }F^t(x)\right\}\\
\Gamma^t_{l,k} =&\left\{x\in\az: [0,l]\text{ is included in a segment of }F^t(x)\text{ of length }k\right\}\\
=& \bigsqcup_{i=-k+\ell+1}^{0}\Gamma^t_{[i,i+k+1]}\qquad\mbox{(disjoint union)}\\
\textrm{ and }\quad \Gamma^t_{l,\geq k}=&\bigsqcup_{i\geq k} \Gamma^t_{l,i}.\end{align*}
\end{definition}

\begin{proposition}[Lower bound]\label{prop:LargerSegment}~

Let $\mu\in\Mergfull(\az)$. For all $l\in\N$, one has $\mu(\Gamma^{T_n}_{l,\geq n})
\underset{n\to\infty}{\longrightarrow} 1.$
\end{proposition}
\begin{proof}
$T_n=T'_n$ for $n$ large enough, so we do the proof for $T'_n$. Since $\mu$ has full support,

\[\mu\left(x\in\A^\Z:x_0=\start \textrm{ and }x_i\ne\start \textrm{ for all }i\in \{-2, -1, 1,2,\dots,n\}\right)\neq0.\]
By $\s$-ergodicity of $\mu$, segments of length larger than $n$ appear at time $0$ in $\mu$-almost all configurations, and those
segments survive up to time $T'_n$ by construction. In particular, $\F^{T'_n}\mu([\start])\neq 0$.

By $\s$-ergodicity of $\F^{T'_n}\mu$, the cell 0 is $\mu$-almost surely included in some segment at time $T'_n$, and this segment has length larger than $n$ by definition of $T'_n$. By $\s$-invariance, the probability that $[0,l]$ crosses a border of the segment tends to 0 as $n$ tends to infinity.
\end{proof}

\begin{definition}\label{def:AcceptableSegment}
Let $x\in\az$, $[i,j]$ a segment at time $t\in[T_n,T_{n+1}]$. It is \define{acceptable} if $j-i-1\leq K_n =
\sqrt{T_{n+1}-T_n}$. For $n$ large enough, $K_n=q^{\frac{\lfloor\sqrt n\rfloor}{2}}$.
\end{definition}

\begin{proposition}[Upper bound]\label{prop:AllAcceptable}
Let $\mu\in\Mmixfull(\az)$. One has $\mu(\Gamma^{T_n}_{l,\geq K_n})\underset{n\to\infty}{\longrightarrow} 0$, that is to
say:
\[\mu(\{x\in\az : [0,l]\textrm{ is in an acceptable segment of }F^t(x)\})\underset{t\to\infty}{\longrightarrow} 1\]
and the rate of convergence is exponential.
\end{proposition}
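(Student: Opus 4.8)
The statement asks us to show that, starting from a $\psi$-mixing measure with full support, with high probability the cell $0$ lies in an \emph{acceptable} segment at time $T_n$, with exponential speed. Since Proposition~\ref{prop:LargerSegment} already gives convergence to $0$ of $\mu(\Gamma^t_{l,\geq k})$ but without control on the upper bound of the segment length, the whole point here is to quantify this using $\psi$-mixing rather than mere ergodicity. The strategy is to estimate the probability of the complementary event --- that $[0,l]$ is either not in a segment at all, or in a segment longer than $K_n$ at time $T_n$ --- and show it decays exponentially in $n$ (equivalently, at rate matching the claimed bound).

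\textbf{Step 1: reduce to a statement about the initial configuration.}
Recall from the construction that once $t\geq T_n$ no segment is shorter than $n$, and that a segment of $F^t(x)$ of given length is determined by the positions of two consecutive initialized walls, which in turn come from $\start$ symbols in $x$ separated by more than two cells, \emph{provided} no merging has shortened things below. The key observation is that the length of the segment containing $[0,l]$ at time $T_n$ is controlled by the distance between the nearest $\start$ symbols surrounding $0$ in the initial configuration: if two consecutive $\start$ symbols in $x$ are at distance $d$, after bootstrapping and a sequence of merges the corresponding segment has length roughly $d$ (up to the merges that have occurred by time $T_n$, which only shrink the number of walls but group consecutive initial segments). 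So the event ``$[0,l]$ is in an acceptable segment of $F^{T_n}(x)$'' is implied by ``there exist two $\start$ symbols of $x$, one in $[-K_n/2,0]$ (roughly) and one in $[l, l+K_n/2]$, with appropriate spacing,'' i.e. a local event in a window of size $O(K_n)$ around the origin.

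\textbf{Step 2: estimate the local event using $\psi$-mixing.}
Let $p = \mu([\start]) > 0$ (positive by full support). Cut the interval $[-K_n, -1]$ into $m = \Theta(K_n / C)$ disjoint blocks of some fixed length $C$ (chosen so each block can independently contain a well-isolated $\start$ symbol), and similarly on the right of $[0,l]$. The event that a given block contains an appropriately isolated $\start$ has probability bounded below by some constant $c > 0$. Using the weak mixing coefficients $\psi_\mu$: for blocks separated by at least some fixed gap $g_0$ (where $\psi_\mu(g_0) \leq 1/2$, say), we get
\[
\mu\left(\bigcap_{\text{blocks }j} \overline{E_j}\right) \leq \prod_j (1 - c)(1 + \psi_\mu(g_0)) \leq \left(\tfrac{3}{2}(1-c)\right)^{m}
\]
after thinning the blocks by the fixed gap, which costs only a constant factor in $m$. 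Choosing $c$ large enough that $\tfrac{3}{2}(1-c) < 1$ --- which may require enlarging the block length $C$ so that a block of length $C$ contains an isolated $\start$ with probability close to $1$ --- gives a bound $\rho^{m}$ with $\rho < 1$, hence $\mu(\Gamma^{T_n}_{l,\geq K_n}) \leq \rho^{\Theta(K_n)}$. Since $K_n = q^{\lfloor\sqrt n\rfloor/2} \to \infty$, this is in particular $\to 0$; and since a constant fraction of $K_n$ appears in the exponent, the convergence is exponential in the relevant sense (one can phrase it as exponential in $\sqrt{t}$ via $K_n$, which is more than enough).

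\textbf{The main obstacle.}
The delicate point is Step 1: making precise that a segment of $F^{T_n}(x)$ containing $[0,l]$ has length $\leq K_n$ \emph{whenever} the surrounding initial $\start$ symbols are close enough, despite the merging process. One must check that merges only ever decrease the number of walls and glue \emph{consecutive} initial segments, never produce a segment longer than the total span of the merged group; and that by time $T_n$, a group of initial segments all of length $< K_n$ that have merged together cannot exceed length $K_n$ --- this is exactly what the definition $K_n = \sqrt{T_{n+1}-T_n}$ and the merging schedule (segments of length $k$ merge at $T_k$) are designed to ensure, since a segment reaching length $K_n$ would have merged long before $T_n$. So one invokes the facts of Section~\ref{section:merge}, together with $2^n + n > T_n$, to argue that at time $T_n$ an acceptable upper bound on segment length is automatically respected \emph{unless} the initial data near $0$ was pathologically sparse in $\start$ symbols --- and that pathology is precisely the low-probability event bounded in Step 2. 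The rest ($\psi$-mixing giving an exponential bound on the sparse-$\start$ event) is a routine second-moment-free independence-up-to-$\psi$ computation.
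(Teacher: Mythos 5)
Your Step 2 is sound as far as it goes, but Step 1 --- the reduction to a local event about $\start$ symbols bracketing the origin --- is where the argument breaks, and your ``main obstacle'' paragraph does not repair it. The claim that the segment containing $[0,l]$ at time $T_n$ has length roughly the distance between the two nearest initial $\start$ symbols is false: by time $T_n$ every wall bounding a segment of length $<n$ has been destroyed by the merging schedule, so the time-$T_n$ segment is a union of possibly very many consecutive initial segments and can be far longer than the bracketing gap. Your proposed fix, ``a segment reaching length $K_n$ would have merged long before $T_n$,'' is backwards: segments of length $k$ merge at time $T_k$, so \emph{longer} segments merge \emph{later}, and nothing in the schedule forbids a segment of length $\geq K_n$ at time $T_n$ --- it is merely improbable, and quantifying that improbability is exactly the content of the proposition. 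As a result, your Step 2 only bounds one of the two ways the bad event can occur (a long initial gap containing no $\start$, which is the term $\mu(\Gamma^{0}_{l,\geq k-nL})$ in the paper's proof) and misses the other entirely: the segment can exceed $K_n$ because at some merging time $T_m$, $m\leq n$, it absorbed a long run of consecutive segments of length exactly $m$.

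The paper's proof controls this second mechanism by a recursion over merging times: a segment at time $T_n$ is a segment at time $T_{n-1}$ together with a run of consecutive length-$n$ segments, giving the inclusion $\Gamma^{T_n}_{l,\geq k}\subset\bigcup_{i}\s^i(\Gamma^{T_{n-1}}_{l,\geq k-L})\cup\bigcup_{j}\Delta^{T_{n-1}}_{j,n,\lfloor L/n\rfloor}$, where $\Delta^t_{j,n,\alpha}$ is the event of a run of $\alpha$ consecutive length-$n$ segments. Such a run forces $\start$ symbols at an arithmetic progression of positions in the initial configuration, so $\psi$-mixing yields $\mu(\Delta^t_{0,n,\alpha})\leq(1+\psi_\mu(mn))^{\lfloor\alpha/m\rfloor}\mu\left(\left[{\start}\right]\right)^{\lfloor\alpha/m\rfloor+1}$, exponentially small because $\mu\left(\left[{\start}\right]\right)<1$. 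Iterating the inclusion $n$ times and choosing $L=n^2\sqrt n$, $k=K_n$ gives the stated exponential rate. To close your proof you would need to add this run-of-equal-length-segments estimate together with the induction that propagates it from time $0$ up to $T_n$; without it the reduction in Step 1 is invalid and the argument does not go through.
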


\begin{proof}
Again, $T_n=T'_n$ for $n$ large enough, so we do the proof for $T'_n$.
Any segment at time $T'_n$ corresponds to a segment larger than $n$ merged with $0$ or more
consecutive segments of length $n$ at time $T'_{n-1}$ (only the left wall of segments of size $n$ are destroyed at time
$T'_n$). See Figure~\ref{figure:AllAcceptable} for an illustration of this decomposition.
Therefore we define: 
\[\Delta^t_{n,\alpha}=\{x\in\az : \textnormal{starting from $0$ there is a strip of $\alpha$
consecutive segments of size $n$ in } F^t(x)\}.\]

First we bound the value of $\mu(\Delta^t_{n,\alpha})$. For any $m>0$, by considering one symbol out
of every $m$:
\begin{align}
\mu\left(\Delta^t_{n,\alpha}\right)
&\leq \mu\left(\bigcap_{i=0}^\alpha\s^{in}\left(\left[{\start}\right]\right) \right)\notag\\
&\leq \mu\left(\bigcap_{i=0}^{\lfloor\frac \alpha m\rfloor}\s^{in\cdot m}\left(\left[{\start}\right]\right)
\right)\notag\\
&\leq (1+\psi_\mu(mn))^{\lfloor\frac \alpha m\rfloor}\mu\left(\left[{\start}\right]\right)^{\lfloor\frac
\alpha m\rfloor+1},\label{acc2}
\end{align}
where $\psi_\mu$ are the weak mixing coefficients of $\mu$ as defined in Section~\ref{section:dynamical}.\bigskip

Now take $x$ such that $[0,l]$ is included in a segment longer than $k$ at time $T'_n$. As we said before, this segment is issued from the merging of one segment with $0$ or more segments of length $n-1$ at time $T'_{n-1}$. Take any $L>2n$ and distinguish the two following 
cases concerning the segments at time $T'_{n-1}$ it is issued from:
\begin{itemize}
 \item There were less than $\left\lfloor \frac Ln\right\rfloor$ segments of length $n$: then the other segment
 is larger than $k-L$. By shifting the configuration by $L-l$ cells at most, we can ensure that $[0,l]$ is included in
this segment at time $T'_{n-1}$.
 \item There were more than $\left\lfloor \frac Ln\right\rfloor$ segments of length $n$. Therefore there is a strip of
$\left\lfloor \frac Ln\right\rfloor$ segments of length $n$ starting at some $j\in[-k,k]$.
\end{itemize}
\begin{figure}[!ht]
 \begin{center}
  \begin{tikzpicture}
   \draw (0,0) -- (0,3) (13,0) -- (13,3);
   \foreach \x in {4,...,12}
   {
   \draw (\x,0) -- (\x,1.5);

   \draw (\x-.25,0) rectangle (\x,0.25);
   \draw (\x-.125,0.1) node {\tiny M};
   \draw[->, dotted] (\x-.125,0.3) -- (\x-.125,1.2);
   \draw (\x-.25,1.25) rectangle (\x,1.5);
   \draw (\x-.125,1.35) node {\tiny M};
   }
   \foreach \x in {4,7,10,13}
   {
   \draw[very thick] (\x,0) -- (\x,1.5);
   }
   \draw [<->] (4,-0.3) -- (13,-0.3);
   \draw (8.5,-0.6) node {strip};
   \draw [<->] (4.05,0.5) -- (4.95,0.5);
   \draw [<->] (4,1.7) -- (7,1.7);
   \draw (5.5,1.9) node {$m\cdot n$};
   \draw (4.5,0.3) node {$n$};
   \draw[dotted] (-0.6,1.5) -- (13.5,1.5);
   \draw[->] (-0.4,-0.3) -- (-0.4,3.3);
   \draw (-0.7,3.6) node {time};
   \draw (-1,1.5) node {$T'_n$};
   \draw[dotted] (7.5,0) -- (7.5,3);
   \draw (7.5,3.5) node {0};
   \draw[dotted] (4,0) -- (4,3);
   \draw (4,3.5) node {j};

  \end{tikzpicture}
  \caption{Illustration of the proof of Proposition~\ref{prop:AllAcceptable} with $\alpha=9$ and
$m=3$.}\label{figure:AllAcceptable}
 \end{center}
\end{figure}
In other words,
\[\Gamma^{T'_n}_{l,\geq k} \subset \bigcup_{i=-L+l}^0\s^i\left(\Gamma^{T'_{n-1}}_{l,\geq k-L}\right) \cup
\bigcup_{j=-k+1}^{k-1} \s^j\left(\Delta^{T'_{n-1}}_{n,\left\lfloor \frac Ln\right\rfloor}\right).\]
From which it follows:
\begin{equation}\label{acc1}
\mu\left(\Gamma^{T'_n}_{l,\geq k}\right) \leq L\mu\left(\Gamma^{T'_{n-1}}_{l,\geq k-L}\right)+
2k\mu\left(\Delta^{T'_{n-1}}_{n,\left\lfloor \frac Ln\right\rfloor}\right).
\end{equation}
Now take an arbitrary $n_0>0$ and a constant $M\geq n_0$. For any $n\leq n_0$ and $k\geq L$, Using (\ref{acc2}) with $m = \left\lceil \frac Mn\right\rceil$ inside equation (\ref{acc1}) yields:
\begin{align*}
\mu\left(\Gamma^{T'_n}_{l,\geq k}\right)& \leq L\mu\left(\Gamma^{T'_{n-1}}_{l,\geq k-L}\right)+
2k\left[1+\psi_\mu\left(n\cdot \left\lceil \frac Mn\right\rceil\right)\right]^{\frac
{L}{M}}\mu\left(\left[{\start}\right]\right)^{\frac{L}{M}+1}\\
& \leq L\mu\left(\Gamma^{T'_{n-1}}_{l,\geq k-L}\right)+ 2k\left[(1+\psi_\mu(M))
\mu\left(\left[{\start}\right]\right)\right]^{\frac{L}{M}}
\end{align*}
Applying this equation inductively, and assuming $k\geq n_0L$, we obtain:
\begin{align}\label{acc3}
\mu\left(\Gamma^{T'_{n_0}}_{l,\geq k}\right) &\leq L^{n_0}\mu\left(\Gamma^{0}_{l,\geq
k-n_0L}\right)+2kn_0\left[(1+\psi_\mu(M))\mu\left(\left[{\start}\right] \right)\right]^{\frac{L}{M}}
\end{align}
For the first component of the right-hand term, we have:
\begin{align*}
\mu\left(\Gamma^{0}_{l,\geq k-n_0L}(x)\right)&\leq \mu\left(\az\smallsetminus\bigcap_{j=-k+n_0L}^{-1}
\bigcup_{i=0}^{k-n_0L}\left[{\start}\right]_{j+i}\right) \\
&\leq \mu\left(\bigcup_{j=-k+n_0L}^{-1}\bigcap_{i=0}^{\lfloor\frac{k-n_0L}{n_0}\rfloor}
\left[{\A\backslash\start}\right]_{j+in_0}\right) \\
&\leq(k-n_0L)(1+\psi_\mu(n_0))^{\lfloor\frac{k-n_0L}{n_0}\rfloor}\mu\left(\left[{\A\backslash\start}\right]\right)^{
\lfloor\frac {k-n_0L} {n_0}\rfloor+1}
\end{align*}
the second line being obtained by considering one symbol out of every $n_0$. To conclude, we fix the values $M = n_0$, 
$L = n_0^2\sqrt n_0$, and $k =K_{n_0} = \sqrt{T_{n_0+1}-T_{n_0}}$. Since $\psi_\mu(n) \to 0$ and Equation (\ref{acc3}) holds for any $n_0$, we have $\mu(\Gamma^{T'_n}_{l,\geq K_n})\underset{n\to\infty}{\longrightarrow}0$ and the rate of convergence is exponential.
\end{proof}

\subsubsection{Density of auxiliary states}\label{section:auxiliary}

By auxiliary state, we mean any element of $\A\backslash \B$, that is to say $\start$, $\wall$ 
and any element of $\A$ which is not of the form $(b, \#, \#, \#, \#, \#)$.

\begin{proposition}\label{prop:AcceptableIsFormatted}
 For $t$ large enough, an acceptable segment is formatted and contains only initialised processes.
\end{proposition}

\begin{proof}
In a segment of length $k$, Fact~\ref{fact:formattime} ensures that the segment is formatted if $t\geq k(1+\log k)$. All remaining
uninitialised processes may take up to $k$ more steps to be erased.

When $T_n\leq t<T_{n+1}$, for an acceptable segment of length $k$, we have $k(2+\log k) \leq K_n(2+\log(K_n)) = o(T_n)$
by Proposition~\ref{Equiv-tn}. Taking $n$ large enough, we conclude.
\end{proof}

\begin{proposition}\label{prop:copy}
Let $\mu\in\Mergfull(\az)$ and $u\in\B^{[0,\ell]}$ for some fixed $\ell$. For a given length $k$ such that $n+1\leq
k\leq K_n$, we have:
\begin{itemize}
\item If $t\in[T_n+k,T_{n+1}]$,
\[\left|\mu\left(F^{-t}([u]_0)\ |\ \Gamma^{T_n}_{\ell,k}\right)-\meas{w_n}([u])\right| = O\left(\frac
1{\sqrt{n}}\right);\]
\item If $t\in[T_n,T_n+k]$, \[\left|\mu\left(F^{-t}([u]_0)\ |\
\Gamma^{T_n}_{\ell,k}\right)-\left(\frac{k-(t-T_n)}{k}
\meas{w_{n-1}}([u])+\frac{t-T_n}{k}\meas{w_n}([u])\right)\right| = O\left(\frac 1{\sqrt{n}}\right).\]
\end{itemize}
\end{proposition}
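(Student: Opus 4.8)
The plan is to condition on $\Gamma^{T_n}_{l,k}$, split this event according to where the length-$k$ segment sits, and use that for $n$ large the construction has completely formatted such a segment by time $T_n$: its content at every $t\in[T_n,T_{n+1}]$ is then a \emph{deterministic} function of the position of the segment, namely a periodic repetition of $w_{n-1}$ or $w_n$ on the $\alph{output}$ layer (and blanks on the other layers) outside a zone of $O(\sqrt n)$ cells attached to the right wall. Concretely I would write $\Gamma^{T_n}_{l,k}=\bigsqcup_{i=l-k}^{-1}B_i$, where $B_i$ is the event that $F^{T_n}(x)$ has two consecutive initialized walls at positions $i$ and $i+k+1$ (the placements for which $[0,l]$ lies in the interior of the segment), the union being disjoint since cell $0$ lies in at most one segment. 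Since $F$ commutes with $\s$ we have $B_i=\s^{-i}(B_0)$, so $\mu(B_i)$ is independent of $i$ by $\s$-invariance, and full support of $\mu$ gives $\mu(\Gamma^{T_n}_{l,k})>0$; hence conditioning amounts to picking $i$ uniformly among the $k-l$ admissible values, and one has $\mu(F^{-t}([u])\mid\Gamma^{T_n}_{l,k})=\frac{1}{k-l}\sum_{i=l-k}^{-1}\mu(F^{-t}([u])\mid B_i)$.

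The crux is the formatting claim. For $n$ large a length-$k\le K_n$ segment at time $T_n$ is acceptable (Definition~\ref{def:AcceptableSegment}), hence swept (Proposition~\ref{prop:AcceptableIsSwept}); and since $k>n$ its two walls survive and no merging occurs on $[T_n,T_{n+1}]$, so conditionally on $B_i$ the segment $[i,i+k+1]$ is stable throughout. The construction then forces, at each $t\in[T_n,T_{n+1}]$: the rightmost $O(\sqrt n)$ cells carry the time counter, the three simulated machines and the merge counter --- a ``garbage'' zone of width $O(\sqrt n)$ because $\log_3 T_{n+1}=\Theta(\sqrt n)$ by Proposition~\ref{Equiv-tn} and each machine runs in space $\le\sqrt n+O(1)$; to its left the $\alph{output}$ layer carries a periodic repetition of $w_{n-1}$ at $t=T_n$, which the copy process of Section~\ref{section:copy} (launched at $T_n$, advancing left at constant speed, having refreshed the whole segment by $T_n+k$) progressively replaces by a periodic repetition of $w_n$; the copy head is a further $O(\sqrt n)$ cells at a position determined by $i$ and $t$; off these zones every non-$\alph{output}$ layer is blank; and $|w_n|,|w_{n-1}|=O(\sqrt n)$ by the space bound of Section~\ref{section:computation}. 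All of this is \emph{determined by $B_i$}, the initial random content having been erased: when $[0,l-1]$ avoids the garbage and the copy head, $F^t(x)_{[0,l-1]}$ equals a fixed length-$l$ factor of ${}^{\infty}w_n^{\infty}$ (on the already-refreshed side) or of ${}^{\infty}w_{n-1}^{\infty}$ (on the other side), read at an offset that depends on $i$ only --- affinely, modulo $|w_n|$ (resp. $|w_{n-1}|$).

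It remains to count. Among the $k-l$ values of $i$, all but $O(\sqrt n)$ place $[0,l-1]$ in the clean region; in the second case those for which $[0,l-1]$ is already refreshed form a block of $\approx(t-T_n)$ consecutive integers and the rest $\approx k-(t-T_n)$, while in the first case all clean $i$ are refreshed. Over a block of $M$ consecutive $i$'s the offset runs monotonically through the residues modulo $|w_n|$, so the number of refreshed clean $i$ with $F^t(x)_{[0,l-1]}=u$ is $M\,\meas{w_n}([u])+O(|w_n|)$ --- this is exactly the definition of $\meas{w_n}([u])$ --- and likewise with $\meas{w_{n-1}}([u])$ for the non-refreshed block. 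Summing the refreshed, non-refreshed and bad contributions, dividing by $k-l$, and using $|w_n|,|w_{n-1}|=O(\sqrt n)$, $k\ge n+1$ and $l$ fixed --- so that the block sizes divided by $k-l$ equal $\frac{t-T_n}{k}+O(1/\sqrt n)$ and $\frac{k-(t-T_n)}{k}+O(1/\sqrt n)$, and the $O(|w_n|)$ and $O(\sqrt n)$ terms become $O(1/\sqrt n)$ after division --- yields exactly the two asserted estimates.

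The main obstacle is the formatting claim: one must be sure that after sweeping and computation \emph{every} non-$\alph{output}$ layer is blank outside the $O(\sqrt n)$ cells near the right wall and the copy head --- no surviving trace of the initial configuration, no stray sweeping counter, no leftover merge signal in the clean region --- and that the copy writes a genuinely periodic repetition of $w_n$ whose phase depends only on the distance to the right wall. This is precisely what Sections~\ref{section:bootstrapping}--\ref{section:merge} are engineered to provide; the counting above is then routine bookkeeping.
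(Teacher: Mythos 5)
Your overall strategy is the same as the paper's: decompose $\Gamma^{T_n}_{l,k}$ into the disjoint translates $\s^i(\Gamma^{T_n}_{[-1,k]})$, use $\s$-invariance to reduce the conditional probability to a uniform average over the $k-l$ admissible positions $i$, and then count occurrences of $u$ inside a formatted segment up to an $O(\sqrt n)$ zone of auxiliary states. The first bullet goes through exactly as you describe, and your counting over $i$ is just the paper's counting over positions $j$ inside the segment after the change of variables $\mu(F^{-t}([u]_i)\mid\Gamma^{T_n}_{[-1,k]})=\mu(F^{-t}([u]_0)\mid\Gamma^{T_n}_{[i,i+k+1]})$.

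There is, however, a genuine gap in your treatment of the second bullet. You assert that for $x\in B_i$ the content of the segment at every $t\in[T_n,T_{n+1}]$ is a \emph{deterministic} function of $i$, and in particular that at $t=T_n$ the $\alph{output}$ layer carries a single periodic repetition of $w_{n-1}$ whose phase depends only on the distance to the right wall. This is false: a segment of length $k$ at time $T_n$ has in general just been created by the merging, at time $T_n$, of a left segment with several segments of length $n$, and each of those former sub-segments carries its own copies of $w_{n-1}$ anchored at its \emph{own} (now destroyed) right wall. The result is a concatenation of periodic blocks with phase defects at the positions of the destroyed walls, and those positions depend on the initial configuration, not on $i$. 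Consequently $\mu(F^{-t}([u])\mid B_i)$ is not $0$ or $1$, and your step ``the offset runs monotonically through the residues modulo $|w_{n-1}|$'' breaks across each defect. The estimate still holds, but only after the extra accounting the paper makes explicit: there are at most $\frac kn$ destroyed walls, each defect can spoil at most $O(\sqrt n)$ occurrences of $u$ (since $|w_{n-1}|\leq\sqrt n$ and $l$ is fixed), so the total contribution is $\frac 1k\cdot O(\sqrt n)\cdot\frac kn=O\left(\frac 1{\sqrt n}\right)$; and one must exchange the average over $i$ with the expectation over $x$ (this is exactly the paper's $\s$-invariance step) rather than invoke determinism. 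With that correction your argument matches the paper's proof.
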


\begin{figure}[!ht]
\begin{center}
 \begin{tikzpicture}
 \fill[black!20] (0.6,0) -- (8.4,0) -- (8.4,0.6) -- (3.6,5.4) -- (0.6,5.4) -- cycle;
 \draw[red, very thick] (8.4,0.6) -- (3.6,5.4);
 \draw (2.8,1.8) node {$w_{n-1}$};
 \draw (5.7,4.5) node {$w_n$};
 \foreach \x in {0,14}
 {
 \foreach \y in {0,...,8}
 {
 \filldraw[fill = black!50!] (0.6*\x,0.6*\y) rectangle (0.6*\x+.6,0.6*\y+.6);
 \draw (0.6*\x+0.3,0.6*\y+0.3) node {W};
 }
 }
 \filldraw[fill = black!50!] (0.6*8,0) rectangle (0.6*8+.6,.6);
 \draw (0.6*8+0.3,0.3) node {W};
 \fill[pattern = north east lines] (0.6*8,0.6) -- (0.6*8+.6,0.6) -- (0.6*8+.6,3.6) -- (0.6*8,4.2) -- cycle;
 \filldraw[fill = black!50!] (0.6*11,0) rectangle (0.6*11+.6,.6);
 \draw (0.6*11+0.3,0.3) node {W};
 \fill[pattern = north east lines] (0.6*11,0.6) -- (0.6*11+.6,0.6) -- (0.6*11+.6,1.8) -- (0.6*11,2.4) -- cycle;
 \end{tikzpicture}
\caption{Illustration of Proposition~\ref{prop:copy}. The output is not correctly written in dashed areas because of the
destruction of a wall.}
\end{center}
\end{figure}

\begin{proof}
%\[\mu\left(\Gamma^{T_n}_{\ell,k}\right) = \bigsqcup_{i=-k+\ell+1}^{0}\Gamma^{T_n}_{[i,i+k+1]}
%= \bigsqcup_{i=0}^{k-\ell-1}\s^i\left(\Gamma^{T_n}_{[-1,k]}\right)\quad\quad\tx{(disjoint union)}.\]
Take $x\in\Gamma^{T_n}_{[-1,k]}$. Since a segment of length $k$ with $n+1\leq k\leq K_n$ is acceptable, it is formatted, and any uninitialised symbol has been destroyed. Since $|w_n| = O(\sqrt n)$ (the length of the output is smaller than the computing space), the copying process uses $O(\sqrt n)$ auxiliary cells.

\paragraph{First point:}\ The tail of the copying process progresses at speed one, so at time $T_n+k$ the copy of $w_n$ is finished, 
and until time $T_{n+1}$ the segment only contains copies of $w_n$ except for the time
counter, computation and merging counter area ($O(\sqrt n)$ cells) and a merging signal (one cell).\bigskip

Therefore for all $x\in\Gamma^{T_n}_{[-1,k]}$, one has $\left|\freq (u, F^t(x)_{[0,k-1]}) - \meas{w_n}([u])\right| =
\frac {O(\sqrt n)}k = O\left(\frac 1{\sqrt{n}}\right)$, taking into account the last copy of $w_n$ in the segment which
can be incomplete ($|w_n|\leq \sqrt n$), and since $k\geq n$. Thus we have: 
\[\left|\frac 1k\sum_{i=0}^{k-1}\mu\left(F^{-t}([u]_i)\ |\ \Gamma^{T_n}_{[-1,k]}\right)- \meas{w_n}([u])\right| = O\left(\frac 1{\sqrt
n}\right).\]
To conclude,
\begin{align*}\mu\left(F^{-t}([u]_0)\ |\ \Gamma^{T_n}_{\ell,k}\right) =& \sum_{i=-k+\ell}^{-1} \mu\left(F^{-t}([u]_0)\
|\
\Gamma^{T_n}_{[-1-i,k-i]}\right)\cdot \mu\left(\Gamma^{T_n}_{[-1-i,k-i]}\ |\ \Gamma^{T_n}_{\ell,k}\right)\\ 
=&\frac{1}{k-\ell} \sum_{i=1}^{k-\ell} \mu\left(F^{-t}([u]_0)\ |\ \Gamma^{T_n}_{[i-1,i+k]}\right) \\
=&\frac{1}{k-\ell} \sum_{i=1}^{k-\ell} \mu\left(F^{-t}([u]_i)\ |\ \Gamma^{T_n}_{[-1,k]}\right)
\end{align*}
where the last two lines are by $\s$-invariance of $\mu$. Since each term is at distance $O\left(\frac 1{\sqrt n}\right)$ of $\meas{w_n}([u])$, the result follows.

\paragraph{Second point:}\ When $t\in[T_n,T_n+k]$, the copy is still taking place, with $t-T_n$ cells containing copies
of $w_n$ and the rest containing copies of $w_{n-1}$, except for except for $O(\sqrt n)$ various auxiliary states, 
and possibly defects when a wall has been destroyed at time $T_n$ (there are at most $\frac kn$ of
them). Therefore
\[\left|\freq\left(u,F^t(x)_{[0,k-1]}\right)-\left(\frac{k-(t-T_n)}{k}\meas{w_{n-1}}([u])+\frac{t-T_n}{k} \meas{w_{n}}([u]
)\right)\right| = \frac 1k O(\sqrt n)\cdot \frac kn= O\left(\frac 1{\sqrt{n}}\right),\]
since $k\geq n$. Using the same reasoning as the
first point, we conclude.
\end{proof}

\subsubsection{Proof of Theorem~\ref{MainTheorem} - first point}\label{section:FirstPoint}

We prove the following: for a given computable sequence of words $(w_n)_{n\in\N}$, the CA $F$ that we described above satisfies that for any measure $\mu \in \Mmixfull(\az)$, $\V(F,\mu) = \V((w_n)_{n\in\N})$.\bigskip

Let $\mu\in\Mmixfull(\az)$ and $u\in\B^{[0,\ell]}$. By
Propositions~\ref{prop:LargerSegment} and \ref{prop:AllAcceptable}, $\mu\left(\bigcup_{k=n+1}^{K_n}
\Gamma^{T'_n}_{\ell,k}\right)\underset{n\to\infty}{\longrightarrow} 1$ exponentially fast, 
and $\Gamma^t_{\ell,k} = \Gamma^{T'_n}_{\ell,k}$ for $t\in[T'_n, T'_{n+1}-1]$. Therefore:
\[\exists C>0, \max_{T_n\leq t<T_{n+1}}\left|F^t_{\ast}\mu([u]) - \sum_{k=n+1}^{K_n}
\mu\left(F^{-t}([u])|\Gamma^t_{\ell,k}\right)\mu\left(\Gamma^t_{\ell,k}\right)\right| = O\left(e^{-Cn}\right).\]
Take $n$ large enough that $T_n=T'_n$. By Proposition~\ref{prop:copy},
\begin{align*}
\max_{T'_n\leq
t<T'_{n+1}}\left|\F^t\mu([u])-\sum_{k=n+1}^{K_n}\mu(\Gamma^{T_n}_{\ell,k})\right.&\left(\max\left(0,\frac{k-(t-T_n)}{k}
\right)\right.\meas{w_{n-1}}([u])\\
&+\left.\left.\min\left(1,\frac{t-T_n}k\right)\meas{w_n}([u])\right)\right| = O\left(\frac 1{\sqrt n}\right).
\end{align*}
Let $f_n$ be the piecewise affine function defined by:
\[\begin{array}{lccl}f_n:&[T_n,T_{n+1}]&\longrightarrow&[0,1]\\
&t&\longmapsto& \displaystyle\sum_{k=n+1}^{K_n}\min\left(1,\frac{t-T_n}k\right)\mu\left(\Gamma^{T_n}_{\ell,k} \right)
+ \frac{t-T_n}{T_{n+1}-T_n}\mu\left(\Gamma^{T_n}_{\ell,>K_n}\right).\end{array}\]
The second term is chosen so that $f_n(T_n) = 0$ and $f_n(T_{n+1}) = 1$, but it converges to 0 exponentially fast and thus
does not affect the equation by more than $O\left(\frac 1{\sqrt{n}}\right)$. Therefore:

\[\max_{T_n\leq t<T_{n+1}}\left|\F^t\mu([u])-\left(f_n(t)\meas{w_n}([u])+(1-f_n(t))\meas{w_{n-1}}([u])\right)\right|
= O\left(\frac 1{\sqrt n}\right).\]
\[\max_{T_n\leq t<T_{n+1}}\dm\left(F^t_{\ast}\mu,\left[\meas{w_{n-1}},\meas{w_n}\right]\right) =
O\left(\frac 1{\sqrt n}\right),\]
so $\V(F,\mu)\subset\V((w_n)_{n\in\N})$. Since $f_n$ is $\frac 1n$-Lipschitz on $[T_n,T_{n+1}]$,
any $\nu\in\left[\meas{w_{n-1}},\meas{w_n}\right]$ is at distance at most $\frac 1n$ of an element of the form
$\left(f_n(t)\meas{w_{n}}+(1-f_n(t))\meas{w_{n-1}}\right)$ for $T_n\leq t<T_{n+1}$.

We conclude that $\V(F,\mu)=\V((w_n)_{n\in\N})$.

\paragraph{Rate of convergence}~
For clarity, assume that $w_n$ is computable in space $O(\sqrt{n})$ by repeating elements if necessary.

By Proposition~\ref{Equiv-tn} we have $T_n = \Theta(\lfloor\sqrt n\rfloor q^{\lfloor\sqrt n\rfloor})$ so, writing
$n(t)$ the current value of $n$ at time $t$, we have $n(t)=\Theta(\log(t)^2)$ and $O\left(\frac 1{\sqrt{n(t)}}\right) =
O\left(\frac 1{\log t}\right)$.

We find that the rate of convergence is:
 \begin{align*}\dm\left(\F^t\mu,\V\left((w_n)_{n\in\N}\right)\right)\leq&\ 
\dm\left(\F^t\mu, \left[\meas{w_{n(t)-1}},\meas{w_n(t)}\right]\right)+
\sup_{\nu\in\left[\meas{w_{n(t)-1}},\meas{w_{n(t)}}\right]}\dm\left(\nu, \V\left((w_n)_{n\in\N}\right)\right)
 \\
=&\ O\left(\frac{1}{\log(t)}\right)+\sup\left\{\dm\left(\nu,
\V\left((w_n)_{n\in\N}\right)\right):\nu\in\bigcup_{n\geq
n(t)}\left[\meas{w_n},\meas{w_{n+1}}\right]\right\},\end{align*}

by the last proof. 

\subsubsection{Proof of Theorem~\ref{MainTheorem} - second point}\label{section:SecondPoint} 

Now we treat the case where $\V((w_i)_{i\in\N})=\{\nu\}$. Let $F$ be the cellular automaton associated with this sequence as
described above, and consider $\mu\in\Mergfull(\az)$. Since $\mu$ is not assumed to be $\psi$-mixing,
Proposition~\ref{prop:AllAcceptable} does not apply, and there is no guarantee most segments are acceptable. 
However large segments are still rare; more precisely, $\mu(\Gamma^t_{0,\geq k})\underset{k\to\infty}{\longrightarrow} 0$ 
for all $t$ since all sets $\Gamma^t_{0,k}$ are disjoint.

\Claim{$\F^t\mu([\A\backslash \B]) \underset{t\to\infty}{\longrightarrow} 0$, i.e., the density of auxiliary
states tends to 0.}
\bclaimprf Suppose we are in an initial segment of length $k$. Detached time counters, Turing machines and
merging counters initially present are destroyed in less than $k$ steps. Similarly, left merging signals and copy auxiliary
states initially present progress at speed -1, so they are destroyed before time $k$. Any uninitialised wall is destroyed
after $k(1+\log k)$ steps at most, and any counter attached to it are destroyed after less than $k$ more steps. For all
those states, the probability of apparition after time $k(2+\log k)$ is less than $\mu(\Gamma_{0,\geq
k}^0)\underset{k\to\infty}\longrightarrow 0$.

At time $T'_n$, all segments are longer than $n$, so the density of initialised walls and initialised auxiliary states
inside each segment is $O\left(\frac {\sqrt{n}}n\right)$.

Only uninitialised formatting counters and right merging signals remain. Inside each segment, call \define{non-formatted area}
the interval between the initialised formatting counter of the left wall and the rightmost cell containing one of those
two states. At each step, this area decreases by one cell to its left but may grow by one cell to its right as a
counter or signal progresses. Notice that merging with other segments cannot increase this area since segments of length
$n$ at time $T_n$ are formatted (see Figure~\ref{figure:formatVsNonFormat}).

\begin{figure}[!ht]
 \begin{center}
  \begin{tikzpicture}[scale=1.4]
   \fill[black!20!white] (1,0) -- (4,3) -- (7.9,3) -- (5.9,1) -- (5.9,0) -- cycle;
   \fill[black!20!white] (5.9,0) -- (6.6,0.7) -- (6.6,0) -- cycle;
   \fill[black!20!white] (6.6,0) -- (7.3,0.7) -- (7.3,0) -- cycle;
   \fill[black!20!white] (7.3,0) -- (8,0.7) -- (8,0) -- cycle;
   \fill[black!20!white] (8,0) -- (9,1) -- (9,0) -- cycle;
   \fill[black!20!white] (9,0) -- (10,1) -- (10,0) -- cycle;
   \draw (1,0) -- (1,3);
   \draw[very thick] (1,0) -- (4,3);
   \draw (5.9,0) -- (5.9,1);
   \draw[very thick] (5.9,0) -- (6.6,0.7);
   \draw (6.6,0) -- (6.6,1);
   \draw[very thick] (6.6,0) -- (7.3,0.7);
   \draw (7.3,0) -- (7.3,1);
   \draw[very thick] (7.3,0) -- (8,0.7);
   \draw (8,0) -- (8,2);
   \draw[very thick] (8,0) -- (9,1);
   \draw (9,0) -- (9,2);
   \draw[very thick] (9,0) -- (10,1);
   \draw (10,0) -- (10,3);   
  \end{tikzpicture}
 \end{center}
\caption{Illustration of the last part of the proof of Claim 1. Slanted lines are formatting counters 
and grey areas are potentially non-formatted.}\label{figure:formatVsNonFormat}
\end{figure}

Therefore, a segment at time $T_n$ can contain a non-formatted area longer than $\sqrt{n}$ only if it is issued from a
segment longer than $\sqrt{n}$ initially. Other segments have a non-formatted area smaller than $\sqrt n$ for a length
larger than $n$.
By $\s$-invariance, \[\mu(\{x\in\az\ |\ x_0\textrm{ is in a non-formatted area}\}) \leq \frac {\sqrt n}n +
\mu\left(\Gamma^0_{0,\geq \sqrt n}\right) \underset{n\to\infty}\longrightarrow0.\]
Therefore, for $a\in\A\backslash\B$, we have $\F^t\mu([a])\underset{t\to\infty}\to 0$.
\eclaimprf

\Claim{For any $n\in\N$, $\dm\left(\F^t\mu, Conv\left((\meas{w_i})_{i\geq
n}\right)\right)\underset{t\to\infty}{\longrightarrow}0$, where $Conv(X)$ is the convex hull of the set $X$.}

\bclaimprf
Consider a segment of length $k$ at time $T_n$. At time $T_n+k$ the copying process for $w_n$ is finished, but
since the segment is not necessarily acceptable, other copying processes may have started in the meanwhile (see Figure~\ref{figure:ConvexHull}). Therefore, the segment contains:
\begin{itemize}
 \item auxiliary states, with negligible frequency;
 \item strips containing repeated copies of $w_n$, then $w_{n+1}, w_{n+2}$\dots separated by ongoing copy processes and the frequency of auxiliary copy states being negligible. 
\end{itemize}

Consider a segment of size $k$ at time $T_n$ in the positions $[i,i+k]$. At time $T_n+k$ it is filled with copies of $w_n$. When $t\geq T_n+k$, the  positions $[i,i+k]$ contain a succession of stripes containing $w_N,
w_{N+1},\dots$ with $N\geq n$ plus a negligible part of auxiliary states and defects. The strip containing $w_i$ is larger than $i$ since this word is produced in a segment of size larger than $i$. One deduces that $\mu\left(F^{-(T_n+t)}([u]_0)\ |\ \Gamma^{T_n}_{\ell,k}\right)$ is quite near of $Conv((\meas{w_i}([u]))_{i\geq n})$ for $t\geq k$. Since $\mu(\Gamma^{T_n}_{0,\geq k}) \underset{k\to\infty}{\longrightarrow} 0$, we have

\[\dm\left(\F^{T_n+t}\mu, Conv((\meas{w_i})_{i\geq n})\right) \underset{t\to\infty}\longrightarrow 0.\]
\eclaimprf

\begin{figure}[!ht]
 \begin{center}
  \begin{tikzpicture}[scale=1.4]
   \fill[black!20] (0,0) -- (0,1) -- (2,0) -- cycle (6,0) -- (9,0) -- (9,.5) -- (0,5) --
(0,3) -- cycle (9,2.5) -- (9,4.5) -- (7.6,5.2) -- (3.6,5.2) -- cycle;
   \draw[->] (-.5,-.5) -- (-.5,5.5);
   \draw (-.5, 5.8) node {time};
   \draw (-.6,.5) -- (-.4,.5) (-.6,2.5) -- (-.4,2.5) (-.6,4.5) -- (-.4,4.5) (-.6,5) -- (-.4,5);
   \draw (-1,.5) node {$T_n$};
   \draw (-1,2.5) node {$T_{n+1}$};
   \draw (-1,4.5) node {$T_{n+2}$};
   \draw (-1.2,5) node {$T_n+k$};
   \draw[<->] (0,-.3) -- (9,-.3);
   \draw (4.5,-.6) node {$k$};
   \draw[very thick] (0,-.2) -- (0,5.2) (9,-.2) -- (9,5.2);
   \draw[red,thick] (9,.5) -- (0,5) (2,0) -- (0,1) (6,0) -- (0,3) (9,2.5) --
(3.6,5.2) (9,4.5) -- (7.6,5.2);
   \draw (.5,.25) node {$w_{n-3}$};
   \draw (2,1) node {$w_{n-2}$};
   \draw (4,2) node {$w_{n-1}$};
   \draw (6,3) node {$w_n$};
   \draw (8,4) node {$w_{n+1}$};
   \draw (8.6,5) node {$w_{n+2}$};
  \end{tikzpicture}
 \end{center}
\caption{Illustration of Claim 2. When $t>T_n+k$, a segment of length $k$ is a succession of stripes containing $w_n,
w_{n+1},\dots$ plus a negligible part of auxiliary states and defects.}\label{figure:ConvexHull}
\end{figure}

The second point of the Theorem~\ref{MainTheorem} follows easily from Claim 2.

\begin{remark}
It does not follow from the last claim that the sequence $(\F^t\mu)$ is close to any of the $\meas{w_i}$ at any point,
which is the reason why the result holds
only for a single measure. Controlling the length of the segments as needed in the proof of the first point
requires $\psi$-mixing.
\end{remark}

\section{Removing the auxiliary states}\label{section:NoAux}

Before stating consequences of Theorem~\ref{MainTheorem}, we consider in this section the case where the
cellular automaton does not use auxiliary states, that is, $\A = \B$. A straightforward extension is
impossible: if $\nu$ is a full support measure, and $F : \az \to \az$ a cellular automaton such that $\F^t\mu\to\nu$ for some
initial measure $\mu$, then $F$ is a surjective automaton which leaves the uniform Bernoulli measure invariant. 
Therefore, starting from the uniform measure, $F$ can only reach the uniform Bernoulli measure. 

\bigskip

However, if the limit measure does not have full support, the previous results can be extended by using a word not
charged by the measure to encode the auxiliary states in some sense.

\begin{theorem}\label{MainTheoremNoAux}
Let $(w_n)_{n\in\N}$ be a uniformly computable sequence of words of $\B^\ast$, where $\B$ is a finite alphabet, and assume there
exists a word $u$ that does not appear as subwords in any of the $w_n$. Then there is a cellular automaton $F : \B^\Z\to
\B^\Z$ such that for any measure $\mu \in \Mmixfull(\B^\Z)$, $\V(F,\mu) = \V((w_n)_{n\in\N})$.
\end{theorem}

However, because of the destructive nature of the formatting counter in the construction, the proof in Section~\ref{section:SecondPoint}
cannot be adapted and we cannot weaken the hypothesis to $\mu \in \Mergfull(\B^\Z)$ when $\K$ is a singleton.

\begin{proof}
Let $\A\supset \B$ be the alphabet and $F$ the CA associated to the sequence $(w_n)_{n\in\N}$ by
Theorem~\ref{MainTheorem}. Our
aim is to provide an encoding of any configuration of $\az$ in $\bz$ and a cellular automaton $F'$ that behaves
similarly to $F$ after encoding.\bigskip

Denote $U_d \subset \B^d$ the set of words of length $d$ with prefix $u$, that do not contain $u$ as subword
(except at the first letter), and that do not end with a prefix of $u$. $\#U_d \underset{d\to\infty}\longrightarrow
\infty$, so for $d$ large enough, we can find an injection $\varphi : \A\backslash\B \to U_d$ (encoding the auxiliary
states), and we extend it by putting $\varphi = Id$ on $\B$. For a finite word, we define $\varphi(u_1\dots u_n) =
\varphi(u_1)\dots\varphi(u_n)$, and this can be naturally extended further to configurations $\Phi:\az\longmapsto \B^\Z$
by considering that $\varphi(a_0)$ starts on the column zero. Notice that this encoding is not $\s$-invariant.\bigskip

Let $\mathbf{T}\subset\az$ be the set of configurations such that the word $u$ does not appear on the main layer
($\mathbf{T}$ is a subshift of finite type). Since $u$ marks unambiguously the beginning of a word
of $\varphi(\A\backslash\B)$, the restriction $\Phi:\mathbf{T}\to\bz$ is injective.

We can define locally a decoding $\Psi:\Phi(\mathbf{T})\to\mathbf{T}$ such that $\Psi\circ\Phi=\textrm{Id}$, by looking
 $d$ cells to the right for occurrences of $u$.
If $u$ appears, we are an \define{output cell}, that is, the image by $\varphi$ of a single letter $b\in\B$ 
(corresponding to $(b,\#,\#,\#,\#,\#)$ for $b\in \B$ in the previous construction); otherwise, we belong in an \define{auxiliary cluster},
the image by $\varphi$ of a letter $A\backslash B$ that occupy $d$ cells while containing one letter of output. See
Figure~\ref{fig:encoding} for an example.\bigskip

\begin{figure}[!ht]
 \begin{tikzpicture}
\foreach \t in {3,4,5,13,14,15}
\filldraw[fill=black!20] (.7*\t,0) rectangle (.7*\t+0.7,.7);
\foreach \t in {3,13}
\draw[red,thick] (.7*\t, -.2) rectangle (.7*\t +5.6, .9);
\foreach \t/\x in {0/0, 1/0, 2/1, 3/1, 4/0, 5/1, 6/1, 7/1, 8/0, 9/0, 10/0, 11/1, 12/1, 13/1, 14/0, 15/1, 16/0, 17/0,
18/1, 19/0, 20/0, 21/1}
{
\draw (.7*\t,0) rectangle (.7*\t+0.7,.7);
\draw (.7*\t +.35, .35) node {\x};
}
\draw[->] (7.5,1) -- (7.5,1.8);
\draw (7.8, 1.4) node {$\Phi$};
\foreach \t\x in {0/0, 1/0, 2/1, 3/$a_1$, 4/1, 5/1, 6/$a_2$, 7/1}
{
\draw (4.9+.7*\t,2) rectangle (4.9+.7*\t+0.7,2.7);
\draw (4.9+.7*\t +.35, 2.35) node {\x};
}
\draw (4.6, 2.35) node {$\dots$};
\draw (10.9, 2.35) node {$\dots$};
 \end{tikzpicture}
\caption{Encoding of the auxiliary states with $u = 101$ and $d=3$. In this case $U_d \subset 101\cdot\A^3\cdot 00$. }
\label{fig:encoding}
\end{figure}

Intuitively, we want to build a cellular automaton that behaves similarly as the automaton defined
in Theorem~\ref{MainTheorem}, where elements $(b,\#,\#,\#,\#,\#)$ are represented by output cells
and all other elements by auxiliary clusters. However, $\Phi$ and $\Psi$ are not $\sigma$-invariant, so $\Phi\circ
F\circ\Psi$ is not a cellular automaton. Instead, we build manually a cellular automaton on $\B^\Z$ that behaves in roughly
the same way as $\Phi\circ F\circ \Psi$. \bigskip

Provided the neighbourhood is larger than $[-4d,4d]$, each cell can ``read" the
cluster in which it belongs, and the three clusters to its right and left.
At time 0, if a word $u$ is not the prefix of a word of $U_d$, it is replaced by a word $b^d$ and can never be
created again.  To avoid creating an auxiliary cluster by mistake, we fix to this purpose a letter $b\in\B$ such that
$b^d\notin U_d$. Similarly, auxiliary clusters that are destroyed for any reason leave behind them output $b$ cells.

 \begin{remark} For clarity, in all diagrams of this section, we suppose that $\B = \{0,1\}$, $d=3$ (it would be much
larger in a real implementation) and we represent auxiliary clusters as blocks with layers, instead of words from
$\B^d$. Also we fix $b=0$ in the definition above. \end{remark}

The different parts of the construction are modified in the following way.

\begin{itemize}
 \item $\start$ and $\wall$ clusters, time counters, and Turing machines have the same behaviour as in the previous
construction. However, since the counters take more space, it is necessary to erase $3d$ cells to the left and right of
each $\start$ cluster at time 0.

\begin{figure}[!ht]
\begin{center}
 \begin{tikzpicture}[scale=1.2]

\foreach \t in {0,...,5}
{
\filldraw[fill=black!20] (-7,.9*\t) rectangle (-9.1,.9*\t+0.7);
\draw (-8,.9*\t +.35) node {W};
}

\foreach \x\y\z\t in
{0/0/1/1,3/0/1/0,1/1/1/0,4/1/1/1,2/2/0/1,5/2/1/1,3/3/1/1,6/3/0/1,4/4/1/1,7/4/1/0
,5/5/1/0}
{
 \draw (-0.7*\x,.9*\y+0.35) rectangle (-0.7*\x-2.1,.9*\y+0.7);
 \draw (-0.7*\x-0.7,.9*\y) rectangle (-0.7*\x-1.4,.9*\y+0.35);
 \draw (-0.7*\x-1.05,.9*\y+0.525) node {\z};
 \draw (-0.7*\x-1.05,.9*\y+0.175) node {\t};
}

\foreach \x\y\z in
{6/0/?,7/0/?,8/0/?,9/0/?,0/1/1,7/1/?,8/1/?,9/1/?,0/2/1,1/2/0,8/2/?,9/2/?,0/3/1,
1/3/0,2/3/1,9/3/?,0/4/1,1/4/0,2/4/1,3/4/1,
0/5/1,1/5/0,2/5/1,3/5/1,4/5/1,8/5/0,9/5/0}
{
\draw (-0.7*\x,.9*\y) rectangle (-0.7*\x-0.7,.9*\y+0.7);
\draw (-0.7*\x-.35,.9*\y+.35) node {\z};
}

\draw (0.7,2.9) rectangle (2.8,3.25);
\draw (1.4,2.55) rectangle (2.1,2.9);
\draw (1.75,2.3) node {\small{\texttt{main}}} ;
\draw (1.75,3.5) node {\small{\texttt{copy}}} ;
\end{tikzpicture}
\end{center}
\caption{End of the copying process described in Figure~\ref{figure:copy}, copying the word 1101.}
\label{fig:NoAuxCopy}
\end{figure}

 \item The tail of copying processes progresses to the left at speed one, and behaves normally as long as it does not
meet another auxiliary state (see Figure~\ref{fig:NoAuxCopy}). When the process has finished the copy, it is destroyed and
leaves $b$ cells behind.

 \item Formatting counters progress to the right at speed $d$. This is too fast to keep information on the output
layer, so the counter leaves behind output cells $b$ defined above. Any other signal it meets (e.g. copying process or
length-measuring signal) is similarly erased.
\item When close to a time counter, it may happen that the formatting counter cannot progress by $d$ cells exactly (see
Figure~\ref{fig:FormatOffset}).  In this case, it is offset by less than $d$ cells, and formatting clusters separated by
small offsets in this way are still considered to be the same counter for the rule of the automaton. The subsequent
comparison process is unchanged.

\begin{figure}[!ht]
\begin{center}
 \begin{tikzpicture}[every text node part/.style={align=center},scale=1.2]
\foreach \x\y\z in
{0/0/\texttt{time},3/0/\texttt{time},10/0/\texttt{formatting},13/0/\texttt{formatting},0/1/\texttt{time},3/1/\texttt{time},0/2/\texttt{time},
7/1/\texttt{formatting},10/1/\texttt{formatting},
3/2/\texttt{time}\\\texttt{formatting},7/2/\texttt{formatting},0/3/\texttt{time}\\\texttt{formatting},3/3/\texttt{time}\\\texttt{formatting}}
{
 \draw (-0.8*\x,\y) rectangle (-0.8*\x-2.4,\y+0.8);
 \draw (-0.8*\x-1.2,\y+0.4) node {\z};
}

\foreach \x\y\z in {6/0,7/0,8/0,9/0,6/1,6/2,6/3}
{
\draw (-0.8*\x,\y) rectangle (-0.8*\x-0.8,\y+0.8);
\draw (-0.8*\x-.4,\y+.4) node[text width=.1cm] {?};
}

\foreach \x\y\z in {13/1,14/1,15/1,10/2,11/2,12/2,13/2,14/2,15/2,
7/3,8/3,9/3,10/3,11/3,12/3,13/3,14/3,15/3}
{
\draw (-0.8*\x,\y) rectangle (-0.8*\x-0.8,\y+0.8);
\draw (-0.8*\x-.4,\y+.4) node[text width=.1cm] {$0$};
}
\end{tikzpicture}
\end{center}
\caption{A formatting counter gets offset when entering the time counter area. Notice the auxiliary clusters being
replaced by output cells containing $b=0$.}
\label{fig:FormatOffset}
\end{figure}

\begin{figure}[!ht]
\begin{center}
 \begin{tikzpicture}[every text node part/.style={align=center},scale=1.2]
\foreach \x in {-1,...,6}
{
\filldraw[fill=black!20] (-9.1,0.9*\x) rectangle (-11.3,0.9*\x+0.7);
\draw (-10.1,0.9*\x+.35) node {W};
\filldraw[fill=black!20] (0,0.9*\x) rectangle (2.1,0.9*\x+0.7);
\draw (1.05,0.9*\x+.35) node {W};
}

\foreach \x\y in {0/-1,3/-1,0/0,3/0,0/1,3/2,3/3,0/4,0/5,3/5,0/6,3/6}
{
 \draw (-0.7*\x,0.9*\y) rectangle (-0.7*\x-2.1,0.9*\y+0.7);
 \draw (-0.7*\x-1.05,.9*\y+0.35) node {\texttt{time}};
}

\foreach \x\y in {10/-1, 7/0}
{
 \draw (-0.7*\x,0.9*\y) rectangle (-0.7*\x-2.1,0.9*\y+0.7);
 \draw[thick, ->] (-0.7*\x-1.6,0.9*\y+0.2) -- (-0.7*\x-0.5,\y+0.5);
}

\foreach \x\y in {6/5, 9/6}
{
 \draw (-0.7*\x,0.9*\y) rectangle (-0.7*\x-2.1,.9*\y+0.7);
 \draw[thick, ->] (-0.7*\x-0.5,.9*\y+0.2) -- (-0.7*\x-1.6,.9*\y+0.5);
}

\foreach \x\y in {3/1,0/2}
{
 \draw (-0.7*\x,.9*\y) rectangle (-0.7*\x-2.1,.9*\y+0.7);
 \draw (-0.7*\x-1.05,.9*\y+0.35) node {\texttt{time}};
 \draw[thick, ->] (-0.7*\x-1.6,.9*\y+0.2) -- (-0.7*\x-0.5,.9*\y+0.5);
}

\foreach \x\y in {0/3, 3/4}
{
  \draw (-0.7*\x,.9*\y) rectangle (-0.7*\x-2.1,.9*\y+0.7);
  \draw (-0.7*\x-1.05,.9*\y+0.35) node {\texttt{time}};
  \draw[thick, ->] (-0.7*\x-0.5,.9*\y+0.2) -- (-0.7*\x-1.6,.9*\y+0.6);
}

\foreach \x\y in
{6/-1,7/-1,8/-1,9/-1,6/0,10/0,11/0,12/0,6/1,10/1,11/1,12/1,6/2,10/2,11/2,12/2,
6/3,10/3,11/3,12/3,6/4,10/4,11/4,12/4,10/5,11/5,12/5,7/6,8/6,12/6}
{
\draw (-0.7*\x,.9*\y) rectangle (-0.7*\x-0.7,.9*\y+0.7);
\draw (-0.7*\x-.4,.9*\y+.35) node[text width=.1cm] {?};
}

\foreach \x\y in {7/1,8/1,9/1,7/2,8/2,9/2,7/3,8/3,9/3,7/4,8/4,9/4,9/5,6/6}
{
\draw (-0.7*\x,.9*\y) rectangle (-0.7*\x-0.7,.9*\y+0.7);
\draw (-0.7*\x-.4,.9*\y+.35) node[text width=.1cm] {0};
}
\end{tikzpicture}
\end{center}
\caption{Determination of length. Here $d=3,\,t_0 = 8$ and $o=1$, for a measured length of 13.}
\label{fig:LengthOffset}
\end{figure}

\item Merging signals which determine length of segments also progress at speed $d$. To avoid possible interactions with
copying processes (similarly to the case of formatting counters), the determination of length starts only after the copy
is finished. Thus a merging signal is only offset when entering the time counter area. After bouncing off the right
wall, it returns to the left wall where its offset can be measured. If it takes $t_0$ steps to return with an offset of
$\alpha$, then the segment has length $\frac {t_0}2\cdot d+\alpha$ (see Figure~\ref{fig:LengthOffset}). This value is
compared to $n$ and the rest of the process is not modified.

\end{itemize}
In this way, Propositions~\ref{prop:AllAcceptable} and \ref{prop:AcceptableIsFormatted} still hold. We can check that
at time $t$, with $T_n\leq t <T_{n+1}$, the copy process followed by the process of determination of length for segments
of size $n+1$ still take less than $T_{n+1}-T_n$ steps. Furthermore, the frequency of auxiliary states is multiplied by a fixed constant $d$.
Hence the proof in Section~\ref{section:FirstPoint} can be adapted, and the theorem follows.\end{proof}

\section{Problems solved with this construction}\label{section:RelatedProblem}

In this section, we use Theorem~\ref{MainTheorem} to solve various problems, starting with the characterisation of
reachable limit measures and connected $\mu$-limit measure sets.
We then consider the disconnected case, the case when auxiliary states are not allowed, Césàro mean convergence and
consequences of these results for the decidability of asymptotic properties of cellular automata.

\subsection{Characterisation of reachable \texorpdfstring{$\mu$}{mu}-limit measures set}\label{section:Characterization}

\subsubsection{The connected case}
Reciprocals of the computable obstructions described in Section~\ref{section:ComputableObstruction} follow directly from
Theorem~\ref{MainTheorem}. 

\begin{corollary}\label{cor:ConvUniqueMeasure}Let $\nu\in\Mscomp(\bz)$ be a limit-computable measure. There is an
alphabet $\A\supset \B$ and a cellular automaton
$F:\az\to\az$ such that for any $\mu \in \Mergfull(\az)$, one has $\F^t\mu\underset{t\to\infty}\longrightarrow\nu$.
\end{corollary}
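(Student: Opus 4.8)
The plan is to derive Corollary~\ref{cor:ConvUniqueMeasure} directly from Theorem~\ref{MainTheorem} together with Proposition~\ref{prop:ApproximComputableMeasure}. The key observation is that $\nu\in\Mreff(\bz)$ means, by Proposition~\ref{prop:ApproximComputableMeasure}, that there exists a computable function $f:\N\to\B^{\ast}$ such that $\lim_{n\to\infty}\meas{f(n)}=\nu$. Setting $w_n = f(n)$, we obtain a computable sequence of words $(w_n)_{n\in\N}$ in $\B^{\ast}$, and by construction of the polygonal path, $\add((w_n)_{n\in\N}) = \bigcap_{N>0}\overline{\bigcup_{n\geq N}[\meas{w_n},\meas{w_{n+1}}]}$. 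Since $\meas{w_n}\to\nu$ and consecutive endpoints converge to the same limit, the whole segments $[\meas{w_n},\meas{w_{n+1}}]$ shrink to $\nu$, so $\add((w_n)_{n\in\N}) = \{\nu\}$.

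The first step, then, is to spell out this last convergence argument: for any $\varepsilon>0$, pick $N$ such that $\dm(\meas{w_n},\nu)<\varepsilon$ for all $n\geq N$; then every point of $[\meas{w_n},\meas{w_{n+1}}]$ is within $\varepsilon$ of $\nu$ (using convexity of the ball $\Ball(\nu,\varepsilon)$, which holds since $\dm$ is a convex combination of absolute values of linear functionals on the simplex of measures), hence $\overline{\bigcup_{n\geq N}[\meas{w_n},\meas{w_{n+1}}]}\subset\Ball(\nu,\varepsilon)$. Thus the intersection over all $N$ is exactly $\{\nu\}$, using also that $\nu$ itself is a limit point of the path.

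The second step is to invoke Theorem~\ref{MainTheorem}: applied to the computable sequence $(w_n)_{n\in\N}$ we just built, it yields a finite alphabet $\A\supset\B$ and a cellular automaton $F:\az\to\az$. Since $\add((w_n)_{n\in\N})=\{\nu\}$, the second bullet of the theorem gives exactly what we want: for any $\mu\in\Mergfull(\az)$, $\F^t\mu\underset{t\to\infty}{\longrightarrow}\nu$. This is precisely the statement of the corollary (the notation $\lim_{n\to\N}\F\mu=\nu$ in the statement being shorthand for $\F^t\mu\to\nu$).

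There is essentially no obstacle here — the corollary is a straightforward specialization of the main theorem, and the only thing requiring a line of justification is the elementary fact that a polygonal path whose vertices converge to $\nu$ has $\{\nu\}$ as its set of limit points. The substantive content is entirely in Theorem~\ref{MainTheorem} and Proposition~\ref{prop:ApproximComputableMeasure}, both already established.
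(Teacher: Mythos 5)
Your proof is correct and follows essentially the same route as the paper, which simply combines Proposition~\ref{prop:ApproximComputableMeasure} with Theorem~\ref{MainTheorem}; your extra paragraph verifying that $\add((w_n)_{n\in\N})=\{\nu\}$ when $\meas{w_n}\to\nu$ (via convexity of the balls for $\dm$) is a detail the paper leaves implicit. Note that you correctly invoke the \emph{second} point of Theorem~\ref{MainTheorem}, the one applicable to $\mu\in\Mergfull(\az)$, whereas the paper's one-line proof cites the first point --- apparently a slip, since that point requires $\mu$ to be $\psi$-mixing.
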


\begin{proof}
 Combine Proposition~\ref{prop:ApproximComputableMeasure} with the second point of Theorem~\ref{MainTheorem}.
\end{proof}

\begin{corollary}\label{cor:ConvSetMeasure}
Let $\K\subset \Ms(\B^\Z)$ be a compact, $\Pi_2$-computable and connected ($\Pi_2$-CCC) subset of $\Ms(\B^\Z)$. There is
an alphabet $\A \supset \B$ and a cellular automaton $F:\az\to\az$ such that for any $\mu \in \Mmixfull(\az)$, one has
$\V(F,\mu) = \K$.
\end{corollary}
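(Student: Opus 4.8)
The plan is to simply combine the two main results established so far, since the corollary is essentially a packaging statement. First I would invoke Proposition~\ref{polygonalcover}: as $\add$ is a non-empty $\Sigma_2$-CCC subset of $\Ms(\B^\Z)$, there exists a computable sequence of words $(w_n)_{n\in\N}$ in $\B^{\ast}$ with $\add=\add((w_n)_{n\in\N})$, that is, $\add$ is exactly the set of limit points of the polygonal path traced by the measures $(\meas{w_n})_{n\in\N}$.

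Second, I would feed this sequence into the first point of Theorem~\ref{MainTheorem}: it produces a finite alphabet $\A\supset\B$ and a cellular automaton $F:\az\to\az$ such that for every $\mu\in\Mmixfull(\az)$ one has $\add(F,\mu)=\add((w_n)_{n\in\N})$. Chaining the two equalities gives $\add(F,\mu)=\add$ for every $\mu\in\Mmixfull(\az)$, which is the claim. The only routine point to verify is the identification $\B\subset\A$ of Section~\ref{section:Construction}, under which $\Ms(\B^\Z)\subset\Ms(\A^\Z)$ and the $\meas{w_n}$ — hence $\add$ — indeed sit inside $\Ms(\A^\Z)$; this is already incorporated into the statement of Theorem~\ref{MainTheorem}.

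There is no real obstacle here, so rather than a difficult step, the thing worth highlighting is where the hypotheses enter: $\Sigma_2$-computability of $\add$ is precisely what makes Proposition~\ref{polygonalcover} applicable, compactness is built into the very notion of a $\Sigma_2$-computable compact set, and connectedness is essential because the construction underlying Theorem~\ref{MainTheorem} can only yield connected limit sets, namely limit sets of polygonal paths of periodic-orbit measures. One may optionally add that if the initial measure $\mu$ is moreover computable, then by Proposition~\ref{prop:add=s2ccc} the condition ``$\add$ is $\Sigma_2$-CCC'' is also necessary, so in that case Corollary~\ref{cor:ConvSetMeasure} is a full characterization of the connected sets of limit measures reachable from $\mu$.
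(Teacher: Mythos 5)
Your proposal is correct and coincides with the paper's own proof, which likewise just combines Proposition~\ref{polygonalcover} with the first point of Theorem~\ref{MainTheorem}; your added remarks on where each hypothesis is used and on the converse via Proposition~\ref{prop:add=s2ccc} match the discussion surrounding the corollary in the text.
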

This is in particular a full characterisation of limit measures and connected $\mu$-limit measures sets that are
reachable from some computable initial measure $\mu \in\Ms(\az)$.
\begin{proof}
Combine Proposition~\ref{prop:polygonalcover} with the first point of Theorem~\ref{MainTheorem}.
\end{proof}

For both of these statements, a rate of convergence is given in Theorem~\ref{MainTheorem}, and this rate depends partly of the quality of the approximation of the target measure or the target set by a uniformly computable sequence of computable measures supported by periodic orbits.

\begin{open}
 Can the rate of convergence be improved, or can we prove that this is the best possible rate?
\end{open}

The following corollary is the counterpart of Corollary~\ref{cor:ConvSetMeasure} using
Theorem~\ref{MainTheoremNoAux}. Corollary~\ref{cor:ConvUniqueMeasure} does not have a counterpart since its proof uses
the second point of Theorem~\ref{MainTheorem}.

\begin{definition}
A word $u\in\A^\ast$ is said to be \define{not charged} by a set $\K\in\Ms(\az)$ if for all $\nu\in\K$, $\nu([u]) = 0$.
\end{definition}

\begin{corollary}\label{cor:ConvSetMeasureNoAux}
Let $\K\subset \Ms(\B^\Z)$ be a non-empty $\Pi_2$-CCC subset of $\Ms(\B^\Z)$ that does not charge a word $u\in\B^\ast$.
Then there is a cellular automaton $F:\bz\to\bz$ such that for
any measure $\mu \in \Mmixfull(\B^\Z)$, $\V(F,\mu) = \K$.
In particular, any limit-computable measure which does not have full support can be obtained by this way.
\end{corollary}
\begin{proof}
Since $\K$ does not charge $u$, we can assume without loss of generality that no word in the uniformly computable sequence
$(w_n)_{n\in\N}$ associated to $\K$ by Proposition~\ref{prop:polygonalcover} contain $u$ as subword. Indeed, if not we replace by an other word, this transformation does not have influence on $\V((w_n)_{n\in\N})$ since $u$ does not have. Thus Theorem~\ref{MainTheoremNoAux} applies.
\end{proof}

We leave open in particular the case of limit measures with full support which can happen only if $F$ is surjective. For Corollaries~\ref{cor:ConvSetMeasureNoAux}
and \ref{cor:ConvCesaroMeasureNoAux},
solving this case would imply to characterise the possible asymptotic behaviours of surjective automata. In this case a
similar construction seems difficult since the state $\wall$ which appear only in the initial configuration cannot be coded.

\begin{open}
 Which sets of measures can be reached at the limit by surjective cellular automata?
\end{open}

\subsubsection{Towards the non-connected case}
In Corollary~\ref{cor:ConvSetMeasure} the $\mu$-limit measures set is assumed to be connected. Indeed, in the
construction of Theorem~\ref{MainTheorem}, each word $w_n$ is copied
progressively on each segment, so that we reach the closure of an infinite polygonal path which
is connected. However, non-connected $\mu$-limit measures sets also have some obstructions. For example, if
$\V(F,\mu)$ is finite, we have the following proposition.

\begin{proposition}\label{prop:FiniteAdherenceValue}
 Let $F:\az\to\az$ be a cellular automaton and $\mu\in\Ms(\az)$ such that $\V(F,\mu)$ is finite. Then $\F$ induces a
cycle on $\V(F,\mu)$.
\end{proposition}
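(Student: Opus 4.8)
The plan is to exploit continuity of $\F$ on the compact metric space $\Ms(\az)$ together with the defining property of cluster points. Write $\add(F,\mu) = \{\nu_1,\dots,\nu_m\}$. First I would observe that since $\F$ is continuous and $\add(F,\mu)$ is the set of cluster points of the orbit $(\F^t\mu)_{t\in\N}$, the image $\F(\add(F,\mu))$ is contained in $\add(F,\mu)$: indeed, if $\nu_i = \lim_{j\to\infty}\F^{t_j}\mu$, then $\F\nu_i = \lim_{j\to\infty}\F^{t_j+1}\mu$ is again a cluster point of the orbit. Thus $\F$ restricts to a map $\add(F,\mu)\to\add(F,\mu)$ on a finite set; it remains to show this map is a bijection (equivalently, that it has no ``transient'' points), which is exactly the statement that $\F$ induces a cycle, i.e. a permutation whose action is a single cycle — or at least a permutation. (I would clarify whether the intended claim is ``permutation'' or genuinely ``single $m$-cycle''; the argument below gives that $\F$ permutes $\add(F,\mu)$, and a connectivity-type argument is needed for the stronger single-cycle claim; see the last paragraph.)

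To prove surjectivity of $\F$ on $\add(F,\mu)$, I would argue by contradiction. Suppose some $\nu_i$ is not in the image. Choose $\varepsilon > 0$ smaller than one third of the minimal pairwise distance among $\nu_1,\dots,\nu_m$, and small enough that $\Ball(\nu_i,\varepsilon)$ misses $\F(\Ball(\nu_k,\delta))$ for every $k$ and for a suitable $\delta$: concretely, since $\nu_i\notin\{\F\nu_1,\dots,\F\nu_m\}$ and $\F$ is continuous, there is $\delta\in(0,\varepsilon)$ with $\dm(\F\nu_k,\nu_i) > \varepsilon$ whenever $\dm(\mu',\nu_k)\le\delta$, for all $k$. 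Now, because $\add(F,\mu)$ is the full set of cluster points and is finite, for all $t$ large enough $\F^t\mu$ lies in $\bigcup_k \Ball(\nu_k,\delta)$ (any subsequence escaping this union would have a cluster point outside $\add(F,\mu)$, using compactness of $\Ms(\az)$). Fix such a threshold $t_0$. Then for every $t\ge t_0$ we have $\F^t\mu\in\Ball(\nu_{k(t)},\delta)$ for some index $k(t)$, hence $\F^{t+1}\mu = \F(\F^t\mu)$ satisfies $\dm(\F^{t+1}\mu,\nu_i) > \varepsilon$. Therefore $\F^{t}\mu$ stays at distance $>\varepsilon$ from $\nu_i$ for all $t \ge t_0 + 1$, contradicting the fact that $\nu_i$ is a cluster point. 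Hence $\F$ is surjective on $\add(F,\mu)$, and being a surjection of a finite set, it is a bijection — a permutation of $\add(F,\mu)$.

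The main obstacle is upgrading ``permutation'' to ``single cycle''. For this I would use connectedness-in-time of the orbit's tail: decompose the permutation $\F|_{\add(F,\mu)}$ into disjoint cycles $C_1,\dots,C_r$, and for each cycle $C_s$ pick a small neighbourhood $U_s = \bigcup_{\nu\in C_s}\Ball(\nu,\delta)$, the $\delta$ chosen (by continuity of $\F$ and finiteness) so that $\F(U_s)\subset U_s'$ where $U_s'$ shrinks back into a neighbourhood of $C_s$; in particular $\F^t\mu$ cannot jump from $U_s$ to $U_{s'}$ with $s\ne s'$ once $t$ is large, because one application of $\F$ moves a point near $C_s$ to a point near $C_s$. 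But for $t$ large the orbit visits a neighbourhood of \emph{every} $\nu_j$ (each being a cluster point), so it must visit every $U_s$ infinitely often — impossible unless $r = 1$ since the orbit, a single sequence, is eventually trapped in whichever $U_s$ it lies in at large time. Thus $r = 1$ and $\F$ induces a single cycle on $\add(F,\mu)$. I would present the permutation step carefully and then give this trapping argument for the cycle structure; the only subtlety to get right is choosing the radii $\delta$ uniformly using the finiteness of $\add(F,\mu)$ and uniform continuity of $\F$ on the compact space $\Ms(\az)$.
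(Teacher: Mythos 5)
Your proposal is correct, and it is in fact more complete than the argument the paper gives. The paper's proof runs as follows: it fixes $d=\min\{\dm(\nu,\nu'):\nu\ne\nu'\in\add(F,\mu)\}$, extracts times $n_i$ with $\dm(\F^{n_i}\mu,\nu)<d/3$ and $\dm(\F^{n_i+1}\mu,\nu)>2d/3$, deduces from $\dm(\F^n\mu,\add(F,\mu))\to0$ that $\F^{n_i}\mu\to\nu$, and concludes by continuity that $\F\nu$ is a cluster point distinct from $\nu$; it then simply asserts that $\add(F,\mu)=\{\nu_0,\dots,\nu_{k-1}\}$ with $\F\nu_i=\nu_{i+1\bmod k}$. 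In other words, the paper only proves forward invariance (your first step), and the surjectivity and single-cycle structure are stated without justification. Your second step (if $\nu_i$ had no preimage in $\add(F,\mu)$, continuity at the finitely many $\nu_k$ would force the tail of the orbit to avoid a fixed neighbourhood of $\nu_i$) and your third step (the trapping argument showing the orbit's ``current nearest cluster point'' must follow a single cycle of the permutation, while every index is visited infinitely often) are exactly the missing pieces, and both are sound. Two small points to tighten in the write-up: in the surjectivity step the displayed condition should read $\dm(\F\mu',\nu_i)>\varepsilon$ whenever $\dm(\mu',\nu_k)\le\delta$ (not $\dm(\F\nu_k,\nu_i)$); and in the trapping step the cleanest formulation uses two radii — pick $\delta$ so that $\dm(x,\nu_j)<\delta$ implies $\dm(\F x,\F\nu_j)<d/3$ for every $j$, use that the orbit is eventually within $\delta$ of $\add(F,\mu)$, and conclude that the index $k(t)$ of the nearest cluster point satisfies $\nu_{k(t+1)}=\F\nu_{k(t)}$ for all large $t$, which immediately forces a single cycle.
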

\begin{proof}
Let $d=\min\{\dm(\nu,\nu'):\nu,\nu'\in\V(F,\mu)\textrm{ with }\nu\ne\nu'\}>0$ and consider $\nu\in\V(F,\mu)$. It is
possible to extract a sequence $(n_i)_{i\in\N}$ such that $\dm(\F^{n_i}\mu,\nu)<\frac{d}{3}$
and $\dm(\F^{n_i+1}\mu,\nu)>\frac{2d}{3}$. Since $\dm(\F^n\mu,\V(F,\mu))\underset{n\to\infty}{\longrightarrow}0$ and $\Ball(\nu,\frac{d}{3})\cap\V(F,\mu)=\{\nu\}$, we have $\dm(\F^{n_i}\mu,\nu)\underset{i\to\infty}{\longrightarrow}0$. By continuity of $\F$, one has $\dm(\F^{n_i+1}\mu,\F\nu)\underset{i\to\infty}{\longrightarrow}0$.

One deduces that for all $\nu\in\V(F,\mu)$ there exists $\nu'\in\V(F,\mu)$ such that $\F\nu=\nu'$. So there is
$k\in\N$ such that $\V(F,\mu)=\{\nu_0,\dots,\nu_{k-1}\}$ and $\F\nu_i=\nu_{i+1}$ where the addition is modulo $k$.
\end{proof}

Furthermore, if $\mu$ is computable, then $\V(F,\mu)$ is $\Pi_2$-computable and every isolated point is $\Delta_2$-computable. In particular if $\V(F,\mu)$ is finite, every point is $\Delta_2$-computable. We exhibit some examples of more sophisticated behaviours based on the construction in Theorem~\ref{MainTheorem}. The
first one is a family of cellular automata where $\V(F,\mu)$ is a finite set of connected components mapped by some periodic CA, which is a
partial reciprocal of Proposition~\ref{prop:FiniteAdherenceValue}.
The second one is a family of cellular automata where $\V(F,\mu)$ has an infinite number of connected components.

\begin{example}[Finite set of connected components]\label{example:finite}
Suppose $\K=\{\nu_0,\dots,\nu_{k-1}\}\subset\Ms(\bz)$ is a finite set of $\s$-invariant limit-computable measures such
that $G\nu_i=\nu_{i+1}$ for some periodic cellular automaton $G:\bz\to\bz$ ($G^p = Id$ for some $p\in\N$). Then there is
an alphabet
$\A\supset\B$ and a cellular automaton $F:\az\to\az$ such that $\V(F,\mu)=\K$ for $\mu\in\Mergfull(\az)$. Indeed,
let $F$ be the cellular automaton satisfying $\F^t\mu \to \nu_0$ obtained by Theorem~\ref{MainTheorem}, and consider
the cellular automaton that applies $G$ on the main layer and applies the local rule of $F$ once every $k$ steps. Since $G$ is periodic, the sample produced by the process of copy stay near form $\{\nu_0,\dots,\nu_{k-1}\}$.

The same idea holds if $\K$ is a finite union of $\Pi_2$-CCC sets which are mapped by a periodic cellular automaton
$G:\bz\to\bz$.
\end{example}

\begin{example}[Infinite set of connected components]
We give a sketch of a modification of the construction of Theorem~\ref{MainTheorem} to obtain examples of cellular
automata where $\V(F,\mu)$ has an infinite number of connected components. This is the first such example to our
knowledge. The construction uses the firing squad cellular automaton introduced by Mazoyer \cite{Mazoyer-1996}
$F_{\texttt{FS}}:\B_{\texttt{FS}}^\Z\to \B_{\texttt{FS}}^\Z$, which has the following properties:

\begin{itemize}
 \item the alphabet contains 4 states $\left\{\fire,\block,\white,\order\right\}\subset\B_{\texttt{FS}}$;
 \item if $x_{[0,n]}=\order\white^{n-1}\block$ then $F^{2n}_{\texttt{FS}}(x)_{[0,n]}=\fire^{n+1}$;
 \item the state $\fire$ does not appear in $(F^t_{\texttt{FS}}(x)_{j})_{(t,j)\in[0,n]\times[0,2n-1]}$.
\end{itemize}

Consider a uniformly computable sequence $(\K_i)_{i\in\N}$ of disjoint $\Pi_2$-CCC subsets of $\Ms(\bz)$.
There is a uniformly computable sequence of words $(w_n)_{n\in\N}$ of $\B^\ast$ such that
$\V((w_n)_{n\in\N}=\bigcup_{i\in\N}\K_i$. Define $w'_n=w_n\times\white^{|w_n|}\in (\B\times \B_{\texttt{FS}})^\ast$
and consider the cellular automaton $F:\az\to\az$ given by Theorem~\ref{MainTheorem} which produces
$\V((w'_n)_{n\in\N})$, with $\A\supset\B\times\B_{\texttt{FS}}$. We modify $F$ to obtain $\widetilde{F}$ in the
following way.

\begin{itemize}
\item at time $T_n$, when the copy of $w_n$ is initiated, we initialise a counter on another layer to count the length
$k$ of the segment; \item at time $t=T_{n+1}-2k$, the state $\order$ appears on the left border of each segment (this is
a computable number and the time counter keeps track of current time);
\item All $\fire$ symbols are immediately transformed into $\white$ symbols.
\end{itemize}

This requires the segments to be shorter than $T_{n+1}-T_n$ cells, but the probability that $[0,l]$ belongs to such a
segment tends to 1 as time tends to infinity (Proposition~\ref{prop:AllAcceptable}). Furthermore, the state $\fire$
appears only at times $(T_n)_{n\in\N}$. Therefore, in those segments, $\widetilde{\F}\mu$ approximates the measure
$\meas{w_n}\times\meas{\lfire}$ at time $T_{n+1}$ and the measure $\meas{w_n}\times\meas{\lwhite}$ at time $T_{n+1}+1$.

For an initial measure $\mu\in\Mmixfull(\az)$, one has $\V(\widetilde{F},\mu)=
\left(\bigcup_i\K_i\right)\times\meas{\lfire}\cup\K'$ for some $\K'\subset\Ms\left(\B\times
\left(\B_{\texttt{FS}}\setminus\{\fire\}\right)^\Z\right)$. In particular, $\V(\widetilde{F},\mu)$ has an infinite
number of connected components.
\end{example}

\begin{open}
Is it possible to characterise all compact subsets of $\Ms(\az)$ that can be reached as $\mu$-limit measures set of some
cellular automaton when $\mu$ is computable?
\end{open}

\subsection{Convergence in Cesàro mean}

In this section, by adapting the enumeration $(w_n)_{n\in\N}$, we obtain similar results on $\V'(F,\mu)$, the set
of limit points for the Cesàro mean sequence. It is easy to prove that $\V'(F,\mu)$ is nonempty, connected and
included in the convex hull of $\V(F,\mu)$.

\begin{corollary}\label{cor:ConvCesàroMeasure}
Let $\B$ be a finite alphabet and $\K'\subset \Ms(\B^\Z)$ a $\Pi_2$-CCC set. There exist an alphabet $\A \supset \B$,
and a cellular automaton $F:\az\to\az$ such that for any $\mu \in \Mmixfull(\az)$, one has $\V'(F,\mu) = \K'$.
\end{corollary}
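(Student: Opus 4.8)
The plan is to reduce to Theorem~\ref{MainTheorem} by choosing the enumeration so that the Cesàro averages $\varphi_t^F(\mu)$ trace out the very same polygonal path that the plain iterates $\F^t\mu$ trace in the first point of that theorem, only lagging one block behind. First, by Proposition~\ref{polygonalcover}, fix a computable sequence $(v_j)_{j\in\N}$ of words of $\B^{\ast}$ with $\add((v_j)_{j\in\N})=\add'$. Introduce a computable \emph{repetition schedule} $M_0<M_1<\cdots$, to be specified below, and set $w_n=v_j$ whenever $M_j\le n<M_{j+1}$. Then $(w_n)_{n\in\N}$ is computable, and since repeating a word only inserts degenerate segments $\{\meas{v_j}\}$ into the polygonal path, $\add((w_n)_{n\in\N})=\add((v_j)_{j\in\N})=\add'$. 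Applying Theorem~\ref{MainTheorem} to $(w_n)$ yields an alphabet $\A\supset\B$, a cellular automaton $F$ with associated times $(T_n)$, and, for every $\mu\in\Mmixfull(\A^\Z)$, the identity $\add(F,\mu)=\add'$; it remains to compute $\add'(F,\mu)$.

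Inside a block (i.e.\ for $M_j<n<M_{j+1}$) one has $w_{n-1}=w_n=v_j$, so by the estimates in Section~\ref{section:FirstPoint} the iterate $\F^t\mu$ stays within $O(1/\sqrt{M_j})$ of $\meas{v_j}$ for every $t\in[T_{M_j}+O(K_{M_j}),\,T_{M_{j+1}}]$, the $O(K_{M_j})$ prefix of block $j$ being the short phase during which each segment finishes recopying $v_j$ over $v_{j-1}$. Now choose $(M_j)$ so that $T_{M_{j+1}}/T_{M_j}\to\infty$: by Proposition~\ref{Equiv-tn}, $T_n=\Theta(\lfloor\sqrt n\rfloor q^{\lfloor\sqrt n\rfloor})$, so it suffices that $\sqrt{M_{j+1}}-\sqrt{M_j}\to\infty$ (e.g.\ $M_j=2^j$), and such a schedule is plainly computable — one may obtain $M_{j+1}$ by evaluating the computable map $n\mapsto T_n$. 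Writing $\varphi_t=\varphi_t^F(\mu)$ and decomposing $\varphi_t=\tfrac{T+1}{t+1}\varphi_T+\tfrac1{t+1}\sum_{i=T+1}^{t}\F^i\mu$ with $T=T_{M_j}$, the contribution of the recopying phase and of the auxiliary-state and defect errors is $o(1)$ on average, so that for $t\in[T_{M_j},T_{M_{j+1}}]$ one gets $\dm\big(\varphi_t,[\varphi_{T_{M_j}},\meas{v_j}]\big)\to0$; since $T_{M_{j-1}}/T_{M_j}\to0$, an induction on $j$ then gives $\dm(\varphi_{T_{M_j}},\meas{v_{j-1}})\to0$. Hence during block $j$ the Cesàro sequence moves, up to a vanishing error, along the segment $[\meas{v_{j-1}},\meas{v_j}]$, and these are precisely the segments defining the polygonal path of $(v_j)_{j\in\N}$, so
\[
\add'(F,\mu)=\bigcap_{N>0}\overline{\bigcup_{j\ge N}[\meas{v_{j-1}},\meas{v_j}]}=\add((v_j)_{j\in\N})=\add'.
\]

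The main obstacle is the bookkeeping in the last step: one must verify that the errors accumulated in Section~\ref{section:FirstPoint} (the $O(1/\sqrt n)$ discrepancy per segment, the exponentially small measure of configurations lying in a non-acceptable segment, the density of auxiliary states) really do average away in the Cesàro sum, and that the ``old'' part $\tfrac{T_{M_j}+1}{t+1}\varphi_{T_{M_j}}$ of the running mean is genuinely flushed as $t$ runs over block $j$ — which is exactly what $T_{M_{j+1}}/T_{M_j}\to\infty$ buys — so that no spurious accumulation point survives; everything is quantitative but routine once the schedule is fixed. A minor point is the mild circularity between the constant $q$ (which depends on the alphabet of the machine computing $(w_n)$) and the schedule $(M_j)$ (read off from $(T_n)$, hence from $q$): one fixes a program for $(v_j)$ first, bounds $q$ from the resulting alphabet together with the fixed extra logic implementing the repetition, and only then reads off $(M_j)$.
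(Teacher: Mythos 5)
Your proposal is correct and rests on exactly the mechanism the paper uses: repeat each word $v_j$ of the enumeration over a stretch of indices chosen so that the corresponding time interval dwarfs everything before it (the paper uses blocks $[|\A|^{i^2},|\A|^{(i+1)^2}]$ and the same $T_n=\Theta(\sqrt n\,q^{\sqrt n})$ estimates to show the old part of the running mean is flushed), so that the Ces\`aro mean tracks the polygonal path of $(v_j)$ one block behind. The only difference is organizational: the paper obtains Corollary~\ref{cor:ConvCesàroMeasure} as the special case $\add=\add'$ of the stronger Corollary~\ref{cor:ConvCesàroMeasureBis}, whose proof additionally splices in short connecting paths $P_i$ inside $\add$ at the start of each block; since you only control $\add'(F,\mu)$, you can and do omit that extra machinery.
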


This a full characterisation of sets that can be reached from some initial measure $\mu \in\Mcomp(\az)$ as
$\mu$-limit measures set in Cesàro mean. This corollary is a consequence of the following stronger result, where we
have control over both $\V(F,\mu)$ and $\V'(F,\mu)$.

\begin{corollary}\label{cor:ConvCesàroMeasureBis}
Let $\B$ be a finite alphabet and $\K'\subset\K\subset \Ms(\B^\Z)$ two $\Pi_2$-CCC sets. There exist an alphabet $\A
\supset \B$ and a cellular automaton $F:\az\to\az$ such that for any $\mu \in \Mmixfull(\az)$, one has:
\begin{itemize}
 \item $\V(F,\mu) = \K$;
 \item $\V'(F,\mu) = \K'$.
\end{itemize}
\end{corollary}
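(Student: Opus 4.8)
The plan is to produce a single computable sequence of words $(w_n)_{n\in\N}$ in $\B^\ast$ such that the cellular automaton $F$ furnished by Theorem~\ref{MainTheorem} satisfies simultaneously $\add(F,\mu)=\add$ and $\add'(F,\mu)=\add'$ for every $\mu\in\Mmixfull(\A^\Z)$. The first equality comes for free from Theorem~\ref{MainTheorem} as soon as we arrange $\add((w_n)_{n\in\N})=\add$. For the second, the key observation, extracted from the proof of the first point of Theorem~\ref{MainTheorem}, is that $\F^t\mu$ reaches $\meas{w_n}$ up to $O(1/\sqrt n)$ by time $T_n+K_n$ and then \emph{dwells} near $\meas{w_n}$ until time $T_{n+1}$, while $\sum_{k\le n}K_k=o(T_n)$ and $\mu(\Gamma^{T_n}_{l,>K_n})\to 0$ (Propositions~\ref{prop:AllAcceptable}, \ref{prop:copy} and \ref{Equiv-tn}). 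Consequently $\varphi^F_t(\mu)=\frac1{t+1}\sum_{i=0}^t\F^i\mu$ is, up to an error tending to $0$ uniformly on cylinders, the dwell-time weighted average $\gamma_{N(t)}:=\frac1{T_{N(t)}}\sum_{n<N(t)}(T_{n+1}-T_n)\meas{w_n}$, where $N(t)$ is the largest $n$ with $T_n\le t$. It therefore suffices to design $(w_n)_{n\in\N}$ so that the \emph{drag dynamics} $\gamma_{N+1}=\frac{T_N}{T_{N+1}}\gamma_N+\frac{T_{N+1}-T_N}{T_{N+1}}\meas{w_N}$ has limit set exactly $\add'$ while the polygonal path $\bigcup_n[\meas{w_n},\meas{w_{n+1}}]$ has limit set exactly $\add$.

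By Proposition~\ref{Equiv-tn} the weight $\frac{T_{N+1}-T_N}{T_{N+1}}$ is $\Theta(1/\sqrt N)$, so over a block of $\Theta(\sqrt N)$ consecutive indices the map $\gamma$ can be dragged essentially all the way to the common value of the $\meas{w_n}$'s on that block, whereas over a block of $o(\sqrt N)$ indices $\gamma$ moves by only $o(1)$. This dictates a staged construction; stage $m=1,2,\dots$ starts at an index $N_m$ (chosen increasing and growing fast enough that the quantities below fit) and consists of:
\begin{itemize}
\item an \emph{$\add'$-drag phase}: fix $k_m$ with $2\alpha_{k_m}<1/m$ (the $\alpha_k$ of Proposition~\ref{prop:compperiodic}) and, using Proposition~\ref{polygonalcover} applied to $\add'$, a finite polygonal path $\meas{q_0},\dots,\meas{q_{r_m}}$ that is $\frac1m$-dense in $\add'$, with consecutive vertices $\alpha_{k_m}$-close and all vertices and segments within $2\alpha_{k_m}$ of $\add'$; append each $q_j$ repeated $h_{m,j}$ times, choosing $h_{m,j}$ large enough (it will be of order $\sqrt{(\textrm{current index})}$, up to logarithmic factors) that $\gamma$ is dragged within $1/m$ of $\meas{q_j}$, and large enough overall that this phase multiplies the elapsed time by a factor $\ge 2$;
\item an \emph{$\add$-ping phase}: using Proposition~\ref{polygonalcover} applied to $\add$, a finite polygonal path $\meas{p_0},\dots,\meas{p_{s_m}}$ whose \emph{set of vertices} is $\frac1m$-dense in $\add$, with consecutive vertices close and segments within $2\alpha_{k'_m}$ of $\add$ and with $s_m=o(\sqrt{N_m})$; append $p_0,\dots,p_{s_m}$ once each, together with short connecting paths to the phases before and after;
\item a \emph{padding} step repeating the last word $P_m$ times so that $N_{m+1}$ is large enough for the next ping phase.
\end{itemize}
All quantities $k_m,k'_m,r_m,s_m,h_{m,j},P_m,N_m$ are explicit computable functions of $m$ and of the current index (the polygonal paths being finite prefixes of the computable sequences from Proposition~\ref{polygonalcover}), hence $(w_n)_{n\in\N}$ is computable and lies in $\B^\ast$.

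Correctness is then two verifications. For $\add(F,\mu)=\add$: every segment of $\bigcup_n[\meas{w_n},\meas{w_{n+1}}]$ produced during stage $m$ lies within $O(1/m)$ of $\add$ — here the hypothesis $\add'\subset\add$ is used so that the drag-phase segments, which hug $\add'$, count — and conversely every $\mu\in\add$ is $\frac1m$-approximated by a vertex of the $m$-th ping path; hence $\add((w_n)_{n\in\N})=\add$ and Theorem~\ref{MainTheorem} applies. For $\add'(F,\mu)=\add'$: during the drag and padding of stage $m$, $\gamma$ travels along segments within $O(1/m)$ of $\add'$ and is dragged within $1/m$ of each vertex of a $\frac1m$-dense path in $\add'$; during the ping phase $\gamma$ drifts by only $o(1)$, a perturbation erased by the contractive factor $T_{N_{m+1}}/T_{\,\cdot}\to0$ of the next drag phase (which still multiplies time by $\ge 2$, preventing $\gamma$ from converging to a point). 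Thus every point of $\add'$ is a limit point of $\gamma$, hence of $\varphi^F_t(\mu)$, and every limit point of $\gamma$ lies in $\overline{\add'}=\add'$.

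The main obstacle is the quantitative link between $\varphi^F_t(\mu)$ and the discrete drag sequence $\gamma_N$: one must show, uniformly on cylinders, that the short transition intervals $[T_n,T_n+K_n]$ and the non-acceptable segments contribute negligibly to the Cesàro average, so that $\varphi^F_t(\mu)$ genuinely behaves like the dwell-time weighted average of the $\meas{w_n}$; this is exactly where the explicit rates of Propositions~\ref{Equiv-tn}, \ref{prop:AllAcceptable} and \ref{prop:copy} are needed. The remaining difficulty — calibrating the hold lengths $h_{m,j}$ against the ratios $T_{N+1}/T_N$ so that $\gamma$ is dragged $1/m$-close to each vertex while each stage stays short relative to the elapsed time at its start — is elementary bookkeeping once the drag dynamics is set up.
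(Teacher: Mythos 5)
Your proposal is correct and follows essentially the same strategy as the paper's proof: interleave short excursions through a dense portion of $\add$ (negligible in Ces\`aro time, harmless for $\add(F,\mu)$ because $\add'\subset\add$) with long dwells along points dense in $\add'$ that drag the Ces\`aro average, calibrated via the growth rate of $T_n$ from Proposition~\ref{Equiv-tn}. The paper implements this with one dwell point $w'_{i+1}$ and one target $w_i$ per index block $[|\A|^{i^2},|\A|^{(i+1)^2}]$ rather than your staged $\frac1m$-nets, but the mechanism and the quantitative estimates are the same; just make sure your padding step dwells at a word near $\add'$ (i.e.\ after the trailing connecting path back toward the next drag phase), not at the last ping vertex.
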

This is a full characterisation of pairs of connected subsets $(\K,\K')$ such that $\K'\subset\K$ that can be
reached from some initial measure $\mu \in\Mcomp(\az)$ in this way.

\begin{proof}We use notations from the proof of Proposition~\ref{prop:polygonalcover}. Notably
$(w_n)_{n\in\N}$ and $(w'_n)_{n\in\N}$ are the uniformly computable sequences of words associated
to $\K$ and $\K'$, respectively, and $\Vb_k$ and $\Vb_k^t$ are defined with regard to $\K$.\bigskip

We define a new sequence of words $(w''_n)_{n\in\N}$ in the following manner, using a similar method as
Proposition~\ref{prop:polygonalcover}. For $n\in\N$, let $i_n\leq n$ be the maximal value such that one can find a path
$w_n=u_0, u_1, \dots, u_l = w'_n, u_{l+1},\dots, u_{l'} = w_{n+1}$ with $u_1, \dots u_{l-1}, u_{l+1},\dots, u_{l'} \in
V_{i_n}^t$ and $\dm(u_k,u_{k+1})\leq 4b(i_n)$ for all $k\in[0,l']$.

Let $P_n:\{0,\dots,p_n\}\to\Vb_{i_n}^t$ be such a path. Since $\Vb_{i_n}^t \subset \A^{\leq i_n+1}$, this
path is of length $p_n\leq 2|A|^{i_n+1} \leq 2|A|^{|w_n|+1}<2|A|^{n+1}$.\bigskip

For $i \in [|\A|^{n^2},|\A|^{(n+1)^2}]$, we define:
\vspace{-0.1cm}
\begin{itemize}
 \item[-] if $i<|\A|^{n^2}+p_n,\ w''_i= P_n(i-|\A|^{n^2})$;
 \item[-] otherwise,\ $w''_i = w'_n$.
\end{itemize}

and let $F$ be the CA associated to $(w''_n)_{n\in\N}$ by Theorem~\ref{MainTheorem}. Since all elements of $(w_n)_{n\in\N}$ appear, we can prove as
in Proposition~\ref{prop:polygonalcover} that $\V(F,\mu) = \V((w''_n)_{n\in\N}) = \K$.\bigskip

\begin{figure}[!ht]
 \begin{center}
  \begin{tikzpicture}
   \draw[->] (0,0) -- (14,0);
   \foreach \x\y in {0/0,4/T_{|\A|^{i^2}},5.5/T_{|\A|^{i^2}+p_i},13/T_{|\A|^{(i+1)^2}}}
   {
   \draw (\x,-0.2) -- (\x,0.2);
   \draw (\x,-0.5) node {$\y$};
   }
   \draw (0.3,-0.1) -- (0.3,0.1);
   \draw (0.2,-0.1) -- (0.2,0.1);
   \draw[<->] (0.1,0.2) -- (3.9,0.2);
   \draw[<->] (4.1,0.2) -- (5.4,0.2);
   \draw[<->] (5.6,0.2) -- (12.9,0.2);
   \draw (2,0.5) node {A} (2,-0.5) node {\small $w'_{i-1}$} (4.75,0.5) node {B} (9.25,0.5) node {C} (9.25,-0.5) node
{\small $w'_i$};
  \end{tikzpicture}
 \end{center}
\caption{Intuitively, we prove $A+B\ll C$, then $B\ll A$.}
\end{figure}

We have \[\frac {|\A|^{n^2} +p_n}{|\A|^{(n+1)^2} - (|\A|^{n^2}+p_n)} < \frac{|\A|^{n^2+1}}{|\A|^{(n+1)^2} -
|\A|^{n^2+1}} \underset{n\to\infty}\longrightarrow 0.\]
In other words, the subset $[0,|\A|^{n^2} +p_n]$ is (asymptotically) of negligible density in $[0,|\A|^{(n+1)^2}]$.
Since $T_{i+1}-T_i=q^{\lfloor\sqrt{i}\rfloor}$ (where $q$ is defined in Section~\ref{section:computation}) is an increasing sequence, 
the subset $[0,T_{|\A|^{n^2} +p_n}]$ is of
negligible density in $[0,T_{|\A|^{(n+1)^2}}]$. This means that,
putting $t_{n+1} = T_{|\A|^{(n+1)^2}}$, $d(\varphi_{t_{n+1}}^F(\mu),
\meas{w'_{n+1}}) \underset{n\to\infty}\longrightarrow 0$.\bigskip

Furthermore, notice that for $x,y\in \R_+$, when $y\leq \sqrt{x}$, we have $\lfloor \sqrt{x+y}\rfloor \leq \lfloor
\sqrt{x}\rfloor +1$ and $\lfloor \sqrt{x-y}\rfloor \geq \lfloor \sqrt{x}\rfloor -1$. Thus :
\[T_{|\A|^{n^2}+p_n}-T_{|\A|^{n^2}} < q^{|\A|^{\frac {n^2}2}+1}\cdot2|\A|^{n+1}.\]
\[T_{|\A|^{n^2}+p_n} > T_{|\A|^{n^2}} - T_{|\A|^{n^2}-|\A|^{\frac{n^2}2}} > q^{|\A|^{\frac
{n^2}2}-1}\cdot|\A|^{\frac{n^2}2},\]
and therefore
\[\frac{T_{|\A|^{n^2} +p_n} - T_{|\A|^{n^2}}} {T_{|\A|^{n^2} +p_n}}
\underset{n\to\infty}\longrightarrow 0.\]
This means that, putting $t'_n = T_{|\A|^{n^2} +p_n}$, $d(\varphi_{t'_n}^F(\mu),
\meas{w'_n}) \underset{n\to\infty}\longrightarrow 0$.\bigskip

To sum up, we have two sequences of times $t_0<t'_0<\dots<t_n<t'_n<\dots$ such that, for all $n\in\N$, the Cesàro mean sequence
$(\varphi_t^F(\mu))_{t\in\N}$ is (asymptotically) close to $\meas{w'_n}$ between times $t_n$ and $t'_n$, and is close to
$\meas{w'_{n+1}}$ at time $t_{n+1}$. Furthermore, between times $t'_n$ and $t_{n+1}$, $\varphi_t^F(\mu)$ is by
definition a convex combination of $\varphi_{t'_n}^F(\mu)$ and $\meas{w'_{n+1}}$, and thus it is close to the segment
$[\meas{w'_n},\meas{w'_{n+1}}]$. We conclude that asymptotically, the sequence is close to $\V((w'_n)_{n\in\N})$, and
thus its set of limit points is $\K'$.
\end{proof}

\begin{open}
 Is it possible to extend Corollary~\ref{cor:ConvCesàroMeasureBis} when $\K'$ is not included in $\K$?
\end{open}

Using Example~\ref{example:finite} we can only provide some examples where $\V(F,\mu)\cap\V'(F,\mu)=\emptyset$. 

This result has a counterpart with no auxiliary states, using Theorem~\ref{MainTheoremNoAux}.

\begin{corollary}\label{cor:ConvCesaroMeasureNoAux}
Let $\K'\subset\K\subset \Ms(\B^\Z)$ two nonempty $\Pi_2$-CCC sets that both do not charge the same word $u\in\B^\ast$.
Then there exists a cellular automaton $F:\B\to\B$ such
that for any $\mu \in \Mmixfull(\az)$,
\begin{itemize}
 \item $\V(F,\mu) = \K$;
 \item $\V'(F,\mu) = \K'$.
\end{itemize}
\end{corollary}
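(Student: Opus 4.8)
The plan is to replay the proof of Corollary~\ref{cor:ConvCesàroMeasureBis} essentially word for word, replacing Theorem~\ref{MainTheorem} by Theorem~\ref{MainTheoremNoAux} when the cellular automaton is instantiated, while making sure that every word produced avoids the forbidden word $u$ as a factor, which is precisely what Theorem~\ref{MainTheoremNoAux} requires. Let $(w_n)_{n\in\N}$ and $(w'_n)_{n\in\N}$ be computable sequences of words of $\B^{\ast}$ with $\add=\add((w_n)_{n\in\N})$ and $\add'=\add((w'_n)_{n\in\N})$; since neither $\add$ nor $\add'$ charges $u$, these two sequences can be taken to avoid $u$ as a factor — this is the reduction already used in the proof of Corollary~\ref{cor:ConvSetMeasureNoAux}. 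From $(w_n)$ and $(w'_n)$ I would then build the interleaved sequence $(w''_n)_{n\in\N}$ exactly as in the proof of Corollary~\ref{cor:ConvCesàroMeasureBis}: on the block $n\in[|\B|^{i^2},|\B|^{i^2}+|P_i|]$ the word $w''_n$ runs along a path $P_i$ inside $\V_{n_i}^t$ joining $\meas{w'_i}$ through $\meas{w_i}$ to $\meas{w'_{i+1}}$, and on $n\in[|\B|^{i^2}+|P_i|,|\B|^{(i+1)^2}]$ it is constantly $w'_{i+1}$. Feeding $(w''_n)_{n\in\N}$ and $u$ into Theorem~\ref{MainTheoremNoAux} produces a cellular automaton $F:\B^\Z\to\B^\Z$ with $\add(F,\mu)=\add((w''_n)_{n\in\N})$ for every $\mu\in\Mmixfull(\B^\Z)$; the time-density estimates of the proof of Corollary~\ref{cor:ConvCesàroMeasureBis} (the $A+B\ll C$ and $B\ll A$ comparisons) as well as the vanishing density of walls and of auxiliary clusters depend only on the schedule $(T_n)_{n\in\N}$, not on the alphabet, so they carry over unchanged and give $\add(F,\mu)=\add$ and $\add'(F,\mu)=\add'$.

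The one point that needs genuine checking is that the connecting path words $P_i(k)$ — and not merely the endpoint words $w_n,w'_n$ — can be taken $u$-free. These words are elements of the sets $\V_{n_i}^t$ coming from Proposition~\ref{polygonalcover}, originally built over all of $\B^{\leq k}$. I would re-run the construction of Proposition~\ref{polygonalcover} (and, underneath it, that of Proposition~\ref{prop:compperiodic}) inside the subshift of finite type $X_u=\{x\in\B^\Z: u\text{ does not occur in }x\}$: since $\add$ does not charge $u$, every $\mu\in\add$ is supported on $X_u$, hence for $n\geq|u|$ every length-$n$ word $w$ containing $u$ has $\mu([w])=0$, so the generalized de Bruijn digraph of Proposition~\ref{prop:compperiodic} lives over $u$-free words; this is exactly the ingredient that already underlies Corollary~\ref{cor:ConvSetMeasureNoAux}, namely that $\add$ (and likewise $\add'$) admits a computable approximation by $u$-free periodic orbits with a computable error sequence $\alpha_k\to0$. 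With that $u$-free variant of Propositions~\ref{prop:compperiodic} and~\ref{polygonalcover}, all the sets $\V_k^t$, their vertices, and the connecting paths $P_i$ stay inside $u$-free words, while Claims~1--5 of Proposition~\ref{polygonalcover} are untouched, since they only use that the restricted family of periodic measures is dense near $\add$ at a computable rate. Hence $\add((w''_n)_{n\in\N})=\add$ and the sequence $(w''_n)_{n\in\N}$ is $u$-free.

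The main obstacle is precisely this propagation of $u$-freeness from the endpoint words to the connecting paths, i.e. localizing the polygonal-path construction of Proposition~\ref{polygonalcover} inside $X_u$ while still covering all of $\add$ (and $\add'$); concretely it reduces to the $u$-free density statement used in Corollary~\ref{cor:ConvSetMeasureNoAux}, now applied uniformly along the paths $P_i$. Once that is in place, the remainder is a line-by-line transcription of the proof of Corollary~\ref{cor:ConvCesàroMeasureBis}, with Theorem~\ref{MainTheoremNoAux} substituted for Theorem~\ref{MainTheorem}; in particular the hypothesis $\add'\subset\add$ and the automatic connectedness of $\add'(F,\mu)$ are used in exactly the same places, and no analogue of the second point of Theorem~\ref{MainTheorem} is needed.
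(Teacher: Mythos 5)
Your proposal is correct and follows essentially the same route as the paper, whose own proof of this corollary is just the remark that one adapts the proof of Corollary~\ref{cor:ConvCesàroMeasureBis} by substituting Theorem~\ref{MainTheoremNoAux} for Theorem~\ref{MainTheorem}. You in fact go further than the paper by explicitly isolating the one point it leaves implicit --- that the connecting-path words $P_i(k)$, and not only the $w_n, w'_n$, must be taken $u$-free, which you resolve by localizing Propositions~\ref{prop:compperiodic} and~\ref{polygonalcover} inside the subshift avoiding $u$, exactly the ingredient already underlying Corollary~\ref{cor:ConvSetMeasureNoAux}.
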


\subsection{Undecidability consequences}

We give an undecidability result extending a result of Delacourt on $\mu$-limit sets \cite{Delacourt-2011}.

\begin{corollary}[Rice theorem on $\mu$-limit measures sets]\label{cor:Rice}
Let $P$ be a nontrivial property on non-empty $\Pi_2$-CCC sets of $\Ms(\B^\Z)$ (i.e. not always or never true). There
is no algorithm that can decide, given an alphabet $\A$ and a CA $F:\az\to\az$, whether $\V(F,\mu)$ satisfies $P$ for
$\mu\in\Mmixfull(\bz)$.\end{corollary}

\begin{proof} We proceed by reduction to the halting problem.
Since $P$ is nontrivial, let $\K_1$ and $\K_2$ be two $\Pi_2$-CCC sets that satisfies and does not satisfy $P$,
respectively.
By Proposition~\ref{prop:polygonalcover}, there exists two uniformly computable sequences of words $(w_n)_{n\in\N},(w'_n)_{n\in\N}
\in (\A^{\ast})^\N$ such that $\K_1 =
\V((w_n)_{n\in\N}), \K_2= \V((w'_n)_{n\in\N})$.

Now let $\mathcal{TM}$ be a Turing machine. Define the sequence $(w''_n)_{n\in\N}$ in the following way.
\begin{itemize}
 \item If $\mathcal{TM}$ halts on the empty input in less than $n$ steps, $w''_n = w_n$.
 \item Otherwise, $w''_n = w'_n$.
\end{itemize}
This sequence is computable by simulating $n$ steps of the Turing machine and outputting the corresponding word.
Therefore, we can use the previous construction to
build a CA $F$ such that $\V(F,\mu)= \V((w''_n)_{n\in\N})$. If $\mathcal{TM}$ halts on the empty input, then $w''_n
= w_n$ for $n$ large enough; otherwise,
$w''_n = w'_n$ for $n$ large enough. Thus, $\V(F,\mu)$ satisfies $P$ if and only if $\mathcal{TM}$ halts.
\end{proof}

In Corollary~\ref{cor:Rice} the alphabet is considered as an input of the problem. A similar result with a fixed
alphabet requires to use the construction of Section~\ref{section:NoAux}.

\begin{corollary}\label{cor:RiceNoAux}
Let $\B$ be an alphabet, $\mu \in \Mmixfull(\B^\Z)$, $u\in\B^\ast$, and $P$ be a nontrivial property
on non-empty $\Pi_2$-CCC sets that do not charge $u$. There is no algorithm that can decide, given a CA $F:\bz\to\bz$,
whether $\V(F,\mu)$ satisfies $P$.
\end{corollary}

A direct extension would not be possible. If $\lambda$ is the uniform Bernoulli measure, the problem of whether
$\V(F,\lambda)$ contains only the uniform Bernoulli measure is equivalent to the surjectivity of $F$, which is
decidable \cite{Amoroso-1972}. In the non-probabilistic setting, the only decidable property about the asymptotic behaviour of CA is surjectivity \cite{Guillon-2010}. However, the question of which nontrivial properties on limit measures and $\mu$-limit
measures sets with full support are decidable remains open.

Corollaries~\ref{cor:Rice} and \ref{cor:RiceNoAux} extend naturally for a single limit and the Ces\`aro mean sequence.

\subsection{Computation on the set of measures}\label{section:Oracle}

In this section, we modify the construction to
perform computation on the space of probability measures, that is, we want the $\mu$-limit measures set
to be a function of the initial measure; this requires to keep some information in the construction. When the initial measure is not computable, 
we can use this information as a ``source'' of noncomputability to reach $\mu$-limit
measures sets that would be unreachable otherwise.

\subsubsection{Computation with oracle}

It is possible to construct an arithmetical hierarchy for computability of measures. More precisely, a measure is \define{$\Delta_1$-computable} if it is computable and a measure $\mu$ is \define{$\Delta_n$-computable} for $n\geq2$ if there exists a computable function $f:\A^{\ast}\times\N^{n-1}\to\Q$ such that 
\[\mu([u])=\lim_{i_{n-1}\to\infty}\lim_{i_{n-2}\to\infty}\dots\lim_{i_1\to\infty}f(u,i_1,\dots,i_{n-1})\qquad\forall u\in\A^{\ast}.\]

Similarly as Proposition~\ref{prop:OneStep}, if $\mu$ is $\Delta_n$-computable and $\F^t\mu\underset{t\to\infty}{\longrightarrow}\nu$ then $\nu$ is $\Delta_{n+1}$-computable. In the same way, one can introduce naturally the notion of $\Pi_n$-computable closed set. If $\mu$ is $\Delta_n$-computable then $\V(F,\mu)$ is $\Pi_{n+1}$-computable.

In fact, the obstructions shown in Section~\ref{section:ComputableObstruction} can be generalised to obstructions on $\mu\longmapsto \V(F,\mu)$, including cases where the initial measure is not necessarily computable, by considering
computability with access to an oracle $\mu\in\Ms(\az)$. In all the following, we fix a subset $\mathcal{M}\subset\Ms(\az)$.

A \define{Turing machine with oracle} in $\mathcal M$ has the same behaviour as a classical
Turing machine, except that an oracle $\mu\in\mathcal M$ is fixed prior to computation. The machine can query the oracle
at any time during the
computation by writing $u\in\A^\ast$ and $n\in\N$ on an special additional \define{oracle tape} and entering a special
\define{oracle state}.
At this step, the content of the oracle tape is considered as the oracle input and, after one step, the contents of
the oracle tape are replaced by an approximation of $\mu([u])$ up to an error $2^{-n}$ and the computation resumes.

Let $X,Y$ two countable sets. A function $f:\mathcal M\times X\to Y$ is
\define{computable with oracles in $\mathcal M$} if there exists a Turing machine with oracle in $\mathcal M$ which
takes as input $x\in X$ and returns $y=f(\mu,x)\in Y$, up to reasonable encoding.

\begin{definition}
A function $\varphi:\mathcal M\longrightarrow\Ms(\bz)$ is \define{computable with
oracles in $\mathcal M$} if there exists a computable function with oracles in $\mathcal M$ $f:\mathcal M\times
\N\longrightarrow\B^{\ast}$ such that $|\varphi(\mu)-\meas{f(\mu,n)}|\leq 2^{-n}$. This is an extension of the previous
definition where the image is not countable, hence the abuse of notation.

A sequence of functions $(f_n:\mathcal M\times\Ms(\az)\longrightarrow\R)_{n\in\N}$ is \define{a uniformly computable sequence of computable functions
with oracles in $\mathcal M$} if:
\begin{itemize}\itemsep0em
\item there exists $a:\mathcal M\times\N\times\N\times\A^{\ast}\longrightarrow\Q$ computable with oracles in $\mathcal M$ such that
\[\left|f_n(\mu,\meas{w})-a(\mu,n,m,w)\right|\leq\frac{1}{m}\textrm{ for all }\mu\in\mathcal M, w\in\A^{\ast}\textrm{ and }n,m\in\N;\]
\item there exists $b:\mathcal M\times\N\longrightarrow\Q$ computable with oracles in $\mathcal M$ such that $\dm(\nu,\nu')<b(\mu,m)$
implies $\left|f_n(\mu,\nu)-f_n(\mu,\nu')\right|\leq\frac{1}{m}$ for all $\mu\in\mathcal M$ and $n,m\in\N$.
\end{itemize}

Let $\Kset$ be the set of compact subsets of $\Ms(\bz)$. Defining the computability of a function
$\Psi:\mathcal M\longrightarrow\Kset$ can be done in various ways, similarly as in Proposition~\ref{prop:EquivComput}.
For example, $\Psi$ is \define{$\Pi_2$-computable} if the distance function $\mu, \nu\longmapsto d_{\Psi(\mu)}(\nu)$ is
$\Sigma_2$-computable with oracles in $\mathcal M$.
\end{definition}

The proofs of Section~\ref{section:ComputableObstruction} can be adapted in this framework. For any cellular
automaton $F$ on $\az$:

\begin{itemize}
\item $\mu\longmapsto\F\mu$ is computable with oracles in $\Ms(\az)$ (equivalent to
Proposition~\ref{prop:OneStep});
\item $\mu\longmapsto\V(F,\mu)$ and $\mu\longmapsto\V'(F,\mu)$ are $\Pi_2$-computable with oracles in $\mathcal
Ms(\az)$ (equivalent to Proposition~\ref{prop:add=s2ccc});
\item if $\Psi:\mathcal M\longrightarrow\Kset$ is a $\Pi_2$-computable function with oracles in $\mathcal M$ and if
every element of $\Psi(\mathcal M)$ is connected, then there exists a computable function $f:\mathcal
M\times\N\longrightarrow\A^{\ast}$ with oracles in $\mathcal M$ such that $\Psi(\mu)=\V((f(\mu,n))_{n\in\N})$, where
$\V((f(\mu,n))_{n\in\N})$ is the closure of the limit points of the polygonal path (equivalent to
Proposition~\ref{prop:polygonalcover}).
\end{itemize}

\subsubsection{Towards a reciprocal}
In this section, we give a partial reciprocal to these obstructions. To use the initial measure $\mu\in\Ms(\az)$ as an
oracle, we need to keep some information from the initial configuration. We adapt the original construction in the
following way:\bigskip

Each segment keeps a sample of the initial configuration, using the frequency of patterns inside this sample as an
oracle in the computation. We need to ensure that the frequency of a pattern $u\in\A^k$ in this sample is close to
$\mu([u])$ with a high probability. For this, we use Theorem~III.1.7 of~\cite{Shields-1996} applied on a measure
$\mu\in\Mmixfull(\az)$ that ensures we have an exponential rate of convergence for every length. Formally, for
any $k,m,n\in\N$, $c>0$:
\[\mu\left(\left\{x\in\az : \max_{u\in\A^k}\{|\mu([u])-\freq(u,x_{[0,n]})|\}\geq
\varepsilon\right\}\right)\leq(k+m)\psi(m)^{\frac{n}{k}}\left(\frac{n}{k}+1\right)^{\card(A)^k}
2^{-\frac{nc\varepsilon^2}{4k}}.\]
However, in our construction, we are unable to keep all information from the initial configuration since the formatting
process destroys information in the segment. In all the following, we will only keep information about the density of
$\start$ symbols, and the reached $\mu$-limit set of measures depends on this parameter only. The same method could be
adapted to keep information about longer words, only considering the positions of $\start$ symbols.

\begin{theorem}\label{theorem:Computable}
Let $\Psi:\Mmixfull(\{0,1\}^\Z)\to\Kset$ be a $\Pi_2$-computable function where $\Kset$ is the set of compact
connected
subsets of $\Ms(\bz)$. Assume that if $\mu,\mu'\in\Mmixfull(\{0,1\}^\Z)$ are such that $\mu([1])=\mu'([1])$, we have
$\Psi(\mu)=\Psi(\mu')$.

Then there exists an alphabet $\A\supset \B$ and a cellular automaton $F:\az\to\az$ such that for all
$\mu\in\Mmixfull(\az)$, we have $\V(F,\mu)=\Psi(\pi\mu)$ where $\pi$ is the 1-block map defined by $\pi(x)_i = 1$
when $x_i = \start$, and $\pi(x)_i=0$ otherwise.\end{theorem}

Notice that since only one density is considered, it would be equivalent in this case to consider a $\Pi_2$-computable
function with oracles in $\R$ and define a function $\R\to\Kset$. We kept the statement more technical to be consistent with the general
case.
\begin{proof}
Let $f:\Mmixfull(\{0,1\}^\Z)\times\N\longrightarrow\A^{\ast}$ be a computable function with oracles in
$\Mmixfull(\{0,1\}^\Z)$ such that $\Psi(\mu)=\V((f(\mu,n))_{n\in\N})$ and consider the associated Turing machine with
oracle.

Let $F$ be the cellular automaton defined in Theorem~\ref{MainTheorem} that simulated the Turing machine corresponding to 
$((f(\mu,n))_{n\in\N})$. Of course we need to specify the behaviour of the automata when the machine performs an oracle query.

We add a new layer $\alph{oracle}$ in which each
segment at time $t$ stores the frequency of the state $\start$ in this segment at time 0. To do that, we modify
the construction in the following way:
\begin{itemize}
 \item We subdivide the layer $\alph{oracle}$ in two parts, on which each wall $\wall$ keeps on its left:
\begin{itemize}
 \item the first counter for the number of $\start$ symbols that have been destroyed in its left segment;
 \item the second counter for the length of this segment, 0 if the segment is not formatted.
 \end{itemize}
\item Another counter accompanies each formatting counter, measuring the length of the segment as it progresses.
\item The second counter is initialised as $0$. When the time counter attached to this wall makes a comparison with an
initialised formatting counter (the comparison returns the result ``=''), the second counter stores the length of the
segment. It may take the value $0$ again if it merges with a non-formatted segment (see Figure~\ref{figure:OracleCounter}).
\item When a wall is destroyed by a merging process, it sends to its right an \define{oracle signal} at speed~$1$
containing the information stored in its oracle counters. Such a signal should not cross a formatting counter, so it is
slowed down if necessary.
\item When a wall's counters are $(c_1,c_2)$ and a signal $(c'_1,c'_2)$ comes from its left, there are three cases:
\begin{itemize}
 \item If $c_2=0$, the left segment cannot be formatted; the signal cannot come from an initialised wall and can be safely
ignored. The counters does not change.
 \item If $c_2\ne 0$, the left segment has been formatted and all false signals erased. Thus the information comes from an
initialised wall. The new number of $\start$ symbols is $c''_1=c_1+c'_1+1$ to take the merging into account.
 \begin{itemize}
 \item If $c'_2=0$, the segment just merged with a non-formatted segment and $c''_2=0$;
 \item otherwise $c''_2=c_2+c'_2$.
 \end{itemize}
 The counters take the values $(c''_1,c''_2)$.
\end{itemize}
See Figure~\ref{figure:OracleCounter}. We remark that if the length of the segment is $k$, the information can be coded
in space $\log(k)$, and it is possible to actualise the values before another signal can come from the left.

\begin{figure}[!ht]
 \begin{center}
  \begin{tikzpicture}
  \draw[->] (-.7,-.5) -- (-.7,5.5);
  \draw (-.7,5.8) node {time};
  \draw (-.8,3) -- (-.6,3);
  \draw (-1,3) node {$T_n$};
   \foreach \t in {0,6,13.5}
   {
   \draw[very thick] (\t,-.2) -- (\t,5.5);
   \draw (\t-.35,0) node {\footnotesize{$0,0$}};
   }
   \foreach \t in {3,4.5,12}
   {
   \draw[very thick] (\t,-0.2) -- (\t,3);
   \draw (\t-.35,0) node {\footnotesize{$0,0$}};
   }
   \foreach \a\c in {0/3,3/4.5,4.5/6,6/13.5,12/13.5}
   \draw[thick] (\a,0) -- (\c,0.7*\c-0.7*\a);
   \draw (2.65,2.3) node {\footnotesize{$0,k$}};
   \draw (4.15,1.25) node {\footnotesize{$0,n$}};
   \draw (5.65,1.25) node {\footnotesize{$0,n$}};
   \draw (13.15,1.25) node {\footnotesize{$0,n$}};
   \foreach \a\b\c in {3/6,4.5/6,12/13.5}
   \draw[thick,dotted] (\a,3) -- (\c,0.7*\c-0.7*\a+3);
   \draw (5.55,4.2) node {\footnotesize{$1,2n$}};
   \draw (5.3,5.3) node {\footnotesize{$2,2n+k$}};
   \draw (13.15,4.2) node {\footnotesize{$1,0$}};
   \draw[<->] (0,-.4) -- (3,-.4);
   \draw (1.5,-.6) node {$k$};
   \draw[<->] (4.5,-.4) -- (6,-.4);
   \draw (5.25,-.6) node {$n$};
  \end{tikzpicture}
 \end{center}
 \caption{Each wall has its counter displayed when its value changes. Slanted thick lines are formatting counters, dotted
lines are signals transmitting information.}
\label{figure:OracleCounter}
 \end{figure}

\item If two symbols $\start$ are too close in the initial configuration, they are destroyed by the bootstrapping
process (see Section~\ref{section:bootstrapping}). If a $\start$ is in a group of $\start$ separated by two cells or
less, the rightmost $\start$ sends a formatting counter and the leftmost one starts a time counter. Thus a group of
$\start$ separated by two cells or less behave as a single symbol for initialisation purposes. Each $\start$ symbol except
the leftmost one is transformed immediately into an oracle signal $(1,d)$, where $d$ is the distance to the nearest $\start$ to its left.
The other cells present initially are erased.
\item The Turing machine simulation described in Section~\ref{section:computation} can be adapted to simulate a Turing
machine with oracle. When there is an oracle query for the value of $\mu([\start])$ with precision $2^{-i}$ at time
$t\in [T_n,T_{n+1}]$, there are two possibilities:
\begin{itemize}
\item if $n^{-\frac{1}{6}}\leq 2^{-i}$, the Turing machine uses the information stored in the oracle layer to return the
frequency of $\start$ on the segment at time 0, and this corresponds to an approximation of $\mu([\start])$ with
sufficient precision;
\item if $n^{-\frac{1}{6}}  > 2^{-i}$, the computation stops, and the last word
successfully computed is output. The same thing happens until a time when enough information is available.
\end{itemize}
\end{itemize}

Let us check that $\V(F,\mu)=\Psi(\pi_\ast\mu)$ for $\mu\in\Mmixfull(\az)$. It is clear that the density of auxiliary
states tends to 0, so if the sample approximates correctly $\mu([\start])$, the sequence of words $(w_n)_{n\in\N}$
produced by the cellular automaton correspond to $(f(\mu,n))_{n\in\N}$ up to some repetition. Thus we only need to prove
that the probability that a cell belongs to a segment whose sample corresponds to a ``bad'' approximation tends to $0$
when $t$ tends to $\infty$. Recall that $\Gamma^{T_n}_{[i,j]} = \{x\in\az\ |\ [i,j]$ is a segment at time $T_n\}$.

\begin{align*}
 B_n&\phantom{=}=&&\mu\left(\left\{x\in\az : x_0\textrm{ belongs in a segment
with a ``bad'' sample at time }T_n \right\}\right)\\
&\phantom{=}=&&\sum_{i<0,j>0}\mu\left(\left\{x\in \Gamma^{T_n}_{[i,j]} :
|\mu([\start]) - Freq(\start,x_{[i,j]})|>n^{-\frac 16}\right\}\right)\\
&\phantom{=}=&&\sum_{k>0}k\cdot\mu\left(\left\{x\in\Gamma^{T_n}_{[0,k]} :
|\mu([\start]) - Freq(\start,x_{[0,k]})|>n^{-\frac 16}\right\}\right),
\end{align*}
by $\s$-invariance. By restricting ourselves to $n\leq k\leq K_n$, and for any $m\in\N$ large enough that
$\psi_\mu(m)<1$:
\begin{align*}
 B_n&\phantom{=}\leq&&\mu\left(\Gamma^{T_n}_{0,\geq
K_n}\right)+\sum_{k=n}^{K_n}k\cdot\mu\left(\left\{x\in\az : |\mu([\start]) -
Freq(\start,x_{[0,k]})|>n^{-\frac
16}\right\}\right)\\
 &\phantom{=}\leq&& \mu\left(\Gamma^{T_n}_{0,\geq K_n}\right)+
K_n^2(1+m)\psi_\mu(m)^n\left(n+1\right)^{\card(A)}2^{-\frac{c}{4}n^{\frac{2}{3}}}\\
 &\underset{n\to\infty}{\longrightarrow}&&0.
 \end{align*}

The result follows.
\end{proof}

This result may seem surprising since the same cellular automaton has very different asymptotic behaviours depending
on the initial measure.
\begin{open}
Can Theorem~\ref{theorem:Computable} be extended to characterise functions
$\Psi:\Mmixfull(\{0,1\}^\Z)\to\Kset$ that are realisable as the action of a cellular automaton $F$ in the sense that
for all $\mu$, $\V(F,\mu) = \Psi(\mu)$?
\end{open}

\subsection*{Acknowledgements}
The authors want to thank the anonymous referee for his rigorous and detailed 
review which helped us to clarify the paper. 
Moreover, this work was partially supported by the ANR project QuasiCool (ANR-12-JS02-011-01) and the ANR project Valet (ANR-13-JS01-0010).

%\noindent{\footnotesize 2010 \em{Mathematics Subjects Classification.} 37B10, 37B15,03D10,03D80\\
%\em{Keywords.} Symbolic Dynamics, Cellular automata, SRB measure,Turing machines}

\end{document}